\theoremstyle{plain}
\newtheorem{theorem}{Theorem}[section]
\newtheorem{corollary}[theorem]{Corollary}
\newtheorem{lemma}[theorem]{Lemma}
\newtheorem{proposition}[theorem]{Proposition}
\newtheorem{conjecture}[theorem]{Conjecture}
\newtheorem*{corollary*}{Corollary}
\theoremstyle{definition}
\newtheorem{definition}[theorem]{Definition}
\newtheorem{question}[theorem]{Question} 
\newtheorem{remark}[theorem]{Remark}
\numberwithin{equation}{section}
\numberwithin{figure}{section}
\numberwithin{table}{section}
\DeclareMathOperator{\E}{\mathbb E}
\renewcommand{\P}{\operatorname{\mathbb P}}
\DeclareMathOperator{\Tr}{Tr}
\DeclareMathOperator{\Pois}{Pois}
\def\R{\mathbb R}
\def\Q{\mathbb Q}
\def\Z{\mathbb Z}
\def\C{\mathbb C}
\def\N{\mathbb N}
\def\F{\mathbb F}
\title[Universality for random polynomials over finite fields]{Universality for low degree factors of random polynomials over finite fields}
\author{Jimmy He}
\address{Department of Mathematics, MIT, Cambridge, MA  02139}
\email{jimmyhe@mit.edu}
\author{Huy Tuan Pham}
\address{Department of Mathematics, Stanford University, Stanford, CA  94305}
\email{huypham@stanford.edu}
\author{Max Wenqiang Xu}
\address{Department of Mathematics, Stanford University, Stanford, CA  94305}
\email{maxxu@stanford.edu}
\keywords{Random polynomials, finite field, irreducible factors, universality}
\subjclass{60C05 (Primary), 60B99, 11T06 (Secondary)}
\begin{document}
\maketitle
\begin{abstract}
We show that the counts of low degree irreducible factors of a random polynomial $f$ over $\mathbb{F}_q$ with independent but non-uniform coefficients behave like that of a uniform random polynomial, exhibiting a form of universality for random polynomials over finite fields. Our strongest results require various assumptions on the parameters, but we are able to obtain results requiring only $q=p$ a prime with $p\leq \exp({n^{1/13}})$ where $n$ is the degree of the polynomial. Our proofs use Fourier analysis, and rely on tools recently applied by Breuillard and Varj\'u \cite{BV19poly,BV19walk} to study the $ax+b$ process, which show equidistribution for $f(\alpha)$ at a single point. We extend this to handle multiple roots and the Hasse derivatives of $f$, which allow us to study the irreducible factors with multiplicity.
\end{abstract}

\section{Introduction}
Let $\overline{f}(x)=x^{n}+\sum_{i=0}^{n-1} \varepsilon_i x^i$ be a random monic polynomial with independent uniformly distributed coefficients in a finite field $\F_q$, and consider $N_i'(\overline{f})$, the number of irreducible factors of $\overline{f}$ in $\F_q[x]$ of degree $i$. The moments can be explicitly computed using generating function arguments, and the work of Arratia, Barbour and Tavar\'e established strong asymptotics for these random variables \cite{ABT93}.

In this paper, we study random polynomials $f(x)=\sum_{i=0}^n \varepsilon_i x^i$, where the $\varepsilon_i$ are independent but no longer uniformly distributed in $\F_q$. We establish results that suggest the distribution of the $N_i'(\overline{f})$ are universal, at least when $i$ is not too large compared to $n$. For example, we show the joint distribution of the counts under the non-uniform and uniform models, $N_i'(f)$ and $N_i'(\overline{f})$, for $i\leq n^{\frac{1}{13}}$, are close in total variation distance as long as $q=p$ is prime and $p\leq e^{n^{\frac{1}{13}}}$. We also obtain results for $q=p^e$ as long as $q$ is not too large compared to $n$.

We study these polynomials $f(x)$ using Fourier-analytic methods. Note that the value of $f(\alpha)$ for some $\alpha\in \F_{q^d}$ can be viewed as the state of a random walk defined by $X_{t+1}=\alpha X_t+\varepsilon_{t+1}$ where $\varepsilon_{t+1}$ is drawn from some non trivial distribution. This is known as the $ax+b$ process or the Chung--Diaconis--Graham process, and we use recent tools developed to study this process in \cite{BV19poly,BV19walk}. For this reason, our strongest results apply only when the coefficients lie in $\F_p$. 

This also leads us to study the distribution of $N_i(f)$, the number of \emph{distinct} irreducible factors of degree $i$, since these values are more closely related to the equidistribution of $X_{n+1}=f(\alpha)$. To study the $N_i'(f)$, which count irreducible factors with multiplicity, we instead study the equidistribution of the values of $f$ along with its Hasse derivatives. This requires extending the tools developed in \cite{BV19poly,BV19walk}.

Finally, we remark that certain universality results on random matrices over finite fields, and especially on their eigenvalues (which are of course the roots of the characteristic polynomial), seem to be at least morally related to our results. Recent works \cite{LMN21, eberhard2021, FJSS21} seem to indicate that the eigenvalues of random matrices over $\F_q$ also exhibit universal behaviour. It would be very interesting to see if there is a deeper connection, as well as if other examples of universal behaviour for random objects defined over finite fields could be found.

\subsection{Statistics for the uniform model}
Uniformly random polynomials (or equivalently monic polynomials) have been intensely studied in both the probability and combinatorics literature. Some of the strongest results are due to Arratia, Barbour and Tavar\'e \cite{ABT93}, who established strong, quantitative approximation results for not only the low degree factors, but also the high degree ones. They also established a functional central limit theorem and Poisson-Dirichlet limit theorems. Their results fit into the subsequent general theory of logarithmic combinatorial structures, see \cite{ABT03}. However, their methods rely heavily on the fact that the polynomials are uniformly sampled, and except for certain special deformations analogous to the Ewen's sampling formula for random permutations, their results do not extend to more general models of random polynomials.

Our results show that the low degree irreducible factors of $f$ behave very similarly to those for a uniformly random monic polynomial. This is useful because the distribution of $N_i(\overline{f})$ and $N_i'(\overline{f})$ are very well understood, and this knowledge can be transferred to say something about $N_i(f)$ and $N_i'(f)$. We briefly survey some results on low degree factors for the uniform model $\overline{f}$. We note that in this section only, $N_1(f)$ and $N'_1(f)$ include the irreducible factor $x$, which we exclude for the rest of the paper.

\begin{theorem}[{\hspace{1sp}\cite[Theorem 3.1, Corollary 3.3]{ABT93}}]
\label{thm: uniform low deg}
Let $\pi(i)$ denote the number of monic irreducible polynomials of degree $i$ in $\F_q[x]$. For $i\geq 1$, let $X_i$ and $Y_i$ be independent binomial and negative binomial random variables of parameters $\pi(i)$ and $q^{-i}$. Then
\begin{equation*}
    d_{TV}\left((N_i(\overline{f}))_{i\leq N},(X_i)_{i\leq N}\right)=O\left(N\exp\left(-\frac{n}{2N}\log\frac{4}{3}\right)\right)
\end{equation*}
and
\begin{equation*}
    d_{TV}\left((N_i'(\overline{f}))_{i\leq N},(Y_i)_{i\leq N}\right)=O\left(N\exp\left(-\frac{n}{2N}\log\frac{4}{3}\right)\right).
\end{equation*}
\end{theorem}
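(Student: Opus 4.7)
My plan is to apply the Arratia--Barbour--Tavar\'e conditioning relation. For each monic irreducible $P \in \F_q[x]$ of degree $d$, take $Z_P \sim \mathrm{Geom}(1-q^{-d})$ independent, with $\mathbb{P}(Z_P = k) = (1-q^{-d})q^{-kd}$, and set $Y_i := \sum_{P \colon \deg P = i} Z_P \sim \mathrm{NB}(\pi(i), q^{-i})$. Any factor sequence $(m_P)$ with $\sum_P m_P \deg P = n$ has joint probability $\prod_P (1-q^{-\deg P}) q^{-n}$ under the $Z_P$'s, depending only on the total degree. Consequently $(N_i'(\overline{f}))_{1 \le i \le n}$ has the same law as $(Y_i)_{1 \le i \le n}$ conditioned on $T_n := \sum_{1 \le i \le n} i Y_i = n$.

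The crucial structural input is a local-limit miracle from the Euler product. Substituting $z \mapsto z/q$ into $\prod_{i=1}^n (1-z^i)^{-\pi(i)} \equiv (1-qz)^{-1} \pmod{z^{n+1}}$ gives $\prod_{i=1}^n (1-q^{-i}z^i)^{-\pi(i)} \equiv 1 + z + \cdots + z^n \pmod{z^{n+1}}$. Writing $T_0 := \sum_{i \le N} iY_i$, $T_1 := \sum_{N < i \le n} iY_i$, $Q_N(z) := \prod_{i \le N}(1-q^{-i}z^i)^{\pi(i)}$ (a polynomial of degree $D_N := \sum_{i \le N} i\pi(i) \le 2q^N$), and $\tilde{C}_N := \prod_{N < i \le n}(1-q^{-i})^{\pi(i)}$, the generating function $\E z^{T_1} = \tilde{C}_N \prod_{N < i \le n}(1-q^{-i}z^i)^{-\pi(i)}$ becomes, modulo $z^{n+1}$, equal to $\tilde{C}_N\,Q_N(z)(1 + z + \cdots + z^n)$. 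Since $\deg Q_N = D_N$, the coefficient of $z^m$ stabilises at $\tilde{C}_N Q_N(1) = c_n := \prod_{i \le n}(1-q^{-i})^{\pi(i)}$ for every $D_N \le m \le n$. The same calculation with $N = 0$ yields $\mathbb{P}(T_n = n) = c_n$.

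From the conditioning relation,
\[
d_{TV}\!\left((N_i'(\overline{f}))_{i \le N}, (Y_i)_{i \le N}\right) = \tfrac{1}{2}\,\E\!\left|\frac{\mathbb{P}(T_1 = n - T_0)}{c_n} - 1\right|,
\]
and the integrand vanishes on the event $\{T_0 \le n - D_N\}$. Since $\E[\mathbb{P}(T_1 = n - T_0)/c_n] = 1$ and this ratio is non-negative, one deduces $d_{TV} \le \mathbb{P}(T_0 > n - D_N)$. It remains to bound this tail. Choosing the exponential tilt $\lambda := \log(4/3)/N$, each factor $\E e^{\lambda i Y_i} = [(1-q^{-i})/(1-q^{-i}e^{i\lambda})]^{\pi(i)}$ is finite since $q^{-i}e^{i\lambda} \le q^{-1}\cdot 4/3 < 1$, and a Taylor expansion using $\pi(i)q^{-i} = (1+o(1))/i$ produces $\log \E e^{\lambda T_0} = O(\log N)$. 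Assuming $D_N \le n/2$ (otherwise the stated bound is vacuous), a Chernoff estimate then yields $\mathbb{P}(T_0 > n/2) \lesssim N \exp(-n \log(4/3)/(2N))$, matching the theorem.

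For the binomial statement, the same geometric decomposition gives $X_i = \sum_{P \colon \deg P = i} \mathbf{1}[Z_P \ge 1] \sim \mathrm{Bin}(\pi(i), q^{-i})$, and the distinct-factor count $N_i(\overline{f})$ is produced from the conditioned $(Z_P)$'s by exactly the same coordinate-wise map $\mathbf{1}[\,\cdot \ge 1]$. Since total variation is non-increasing under a common measurable reduction, $d_{TV}((N_i(\overline{f}))_{i \le N}, (X_i)_{i \le N}) \le d_{TV}((N_i'(\overline{f}))_{i \le N}, (Y_i)_{i \le N})$, giving the second bound from the first. The principal technical obstacle is the Chernoff calibration: the tilt $\lambda$ must simultaneously keep each $\E e^{\lambda i Y_i}$ bounded across all $i \le N$ and produce the exact constant $\log(4/3)$ in the exponent; the rest of the argument is combinatorial bookkeeping and Euler-product manipulation.
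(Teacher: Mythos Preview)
The paper does not prove this statement; it is quoted from \cite{ABT93} (Theorem~3.1 and Corollary~3.3), with only the remark that it follows from the more general Theorem~\ref{thm: unif root TV} (also quoted from \cite{ABT93}). Your proposal is essentially the Arratia--Barbour--Tavar\'e argument and is correct in substance: the conditioning relation, the Euler-product identity giving the flat local law $\P(T_1=m)=c_n$ for $D_N\le m\le n$, the reduction of the total variation to the tail $\P(T_0>n-D_N)$, and the Chernoff estimate with tilt $\lambda=\log(4/3)/N$ are exactly the ingredients in \cite{ABT93}. In fact your moment-generating-function bound is slightly pessimistic: since $\sum_{i\le N} i\pi(i)q^{-i}\le N$, one gets $\log\E e^{\lambda T_0}\le C\lambda N=C\log(4/3)$, a constant, so the Chernoff prefactor is $O(1)$ rather than $O(\log N)$.

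One small slip in the binomial reduction: the data-processing inequality does not directly yield
\[
d_{TV}\bigl((N_i(\overline{f}))_{i\le N},(X_i)_{i\le N}\bigr)\le d_{TV}\bigl((N_i'(\overline{f}))_{i\le N},(Y_i)_{i\le N}\bigr),
\]
because $(N_i)_{i\le N}$ is not a function of $(N_i')_{i\le N}$ (knowing the total multiplicity of degree-$i$ factors does not determine how many distinct ones there are). What is true, and what your own description of the map $\mathbf{1}[\,\cdot\ge 1]$ already points to, is that both $(N_i)_{i\le N}$ and $(N_i')_{i\le N}$ are functions of the full profile $(N_P(\overline{f}))_{\deg P\le N}$, and your argument bounds $d_{TV}\bigl((N_P(\overline{f}))_{\deg P\le N},(Z_P)_{\deg P\le N}\bigr)$ by the same quantity $\P(T_0>n-D_N)$, since the Radon--Nikodym derivative depends only on $T_0$. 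This is precisely Theorem~\ref{thm: unif root TV}, from which both halves of Theorem~\ref{thm: uniform low deg} follow by data processing---which is exactly how the paper says the implication runs.
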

Note that when $q$ is large, $\pi(i)\sim \frac{q^i}{i}$, and so both $X_i$ and $Y_i$ become close to Poisson random variables of mean $\frac{1}{i}$. In fact, when $q$ is large, the $N'_i(\overline{f})$ (and also the $N_i(\overline{f})$) are extremely close to the distribution of cycles in a random permutation.
\begin{theorem}[{\hspace{1sp}\cite[Corollary 5.6]{ABT93}}]
\label{thm: rand poly and rand perm}
Let $C_i$ denote the number of cycles of size $i$ in a uniformly random permutation in $S_n$. Then
\begin{equation*}
    d_{TV}\left((N'_i(\overline{f}))_{i=1}^n,(C_i)_{i=1}^n\right)\leq q^{-1}+O\left(q^{-\frac{3}{2}}\right).
\end{equation*}
\end{theorem}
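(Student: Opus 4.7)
The plan is to compare the two distributions pointwise on the set of partitions of $n$. Both are parametrized by multiplicity vectors $c=(c_1,c_2,\ldots)$ with $\sum_i i c_i=n$, and standard enumeration gives
\[
P(C=c)=\prod_i \frac{1}{i^{c_i}c_i!},\qquad P(N'(\overline{f})=c)=\frac{1}{q^n}\prod_i\binom{\pi(i)+c_i-1}{c_i}.
\]
The first step is to form the ratio $R(c):=P(N'(\overline{f})=c)/P(C=c)$ and, using the constraint $\sum_i ic_i=n$ to absorb $q^n=\prod_i q^{ic_i}$, rewrite it as
\[
R(c)=\prod_i\prod_{j=0}^{c_i-1}\frac{i(\pi(i)+j)}{q^i}.
\]

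Next I would use the exact formula $i\pi(i)=\sum_{d\mid i}\mu(d)\,q^{i/d}$, so that $1\cdot\pi(1)=q$, $2\pi(2)=q^2-q$, and more generally $i\pi(i)=q^i(1+\varepsilon_i)$ with $\varepsilon_1=0$ and $\varepsilon_i=O(q^{-i/2})$ for $i\geq 2$. Substituting and Taylor-expanding gives, to leading order,
\[
R(c)-1=\sum_i c_i\varepsilon_i+\sum_i \frac{ic_i(c_i-1)}{2q^i}+\text{higher order},
\]
valid when the $c_i$ are not too large. The $O(q^{-1})$ contributions come only from the $i=1$ term $c_1(c_1-1)/(2q)$ and the $i=2$ term $c_2\varepsilon_2=-c_2/q$; all further terms are $O(q^{-3/2})$.

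The final step is to bound $d_{TV}(N'(\overline{f}),C)=\tfrac{1}{2}\,\E|R(C)-1|$. Under $C$, the low-degree marginals $C_i$ are asymptotically independent $\Pois(1/i)$ (via the conditioning relation $C\stackrel{d}{=}(Z_i)\mid \sum_i iZ_i=n$ with $Z_i\sim\Pois(1/i)$ independent), so $\E[C_1(C_1-1)/2]=\E[C_2]=1/2$, and a careful evaluation of $\E\bigl|c_1(c_1-1)/2-c_2\bigr|$ extracts the constant $q^{-1}$. The main obstacle is the tail behaviour: for $c_1\gtrsim \log q$ the Taylor expansion breaks down and $R(c)$ can be large. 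This is handled by truncating at $c_1\leq K\log q$, where the expansion remains valid, and separately bounding the complementary event using the trivial inequality $|P(N'=c)-P(C=c)|\leq P(N'=c)+P(C=c)$; the remaining sum is negligible since the Poisson tails for $C_1$ (inherited through the conditioning relation) decay superpolynomially in $\log q$, and an analogous tail bound for $N'_1(\overline{f})$ follows from its explicit combinatorial description.
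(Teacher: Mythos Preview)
This theorem is a background result quoted from \cite{ABT93}; the present paper does not give its own proof, so there is nothing to compare your argument against within this manuscript.

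On the merits of your outline: the pointwise ratio
\[
R(c)=\prod_i\prod_{j=0}^{c_i-1}\frac{i(\pi(i)+j)}{q^i}
\]
and the identification of the $O(q^{-1})$ contribution $\tfrac{c_1(c_1-1)}{2q}-\tfrac{c_2}{q}$ are correct, and since the low factorial moments of $(C_1,C_2)$ agree exactly with those of independent Poissons (not merely asymptotically), the triangle inequality already gives $\tfrac{1}{2}\E_C\bigl|C_1(C_1-1)/2-C_2\bigr|\le \tfrac{1}{2}$, hence the stated bound with room to spare. The only genuinely delicate step is making the remainder $O(q^{-3/2})$ uniform in $c$: your Taylor expansion of $\log R(c)$ is only valid when each $\delta_{i,j}=\varepsilon_i+ij/q^i$ is small, which fails once $c_1\gtrsim q$. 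Your truncation strategy at $c_1\le K\log q$ combined with the (exact) Poisson-type tail $\P(C_1\ge m)\le 1/m!$ does control the permutation side, and on the polynomial side one can use the crude bound $\P(N_1'(\overline f)\ge m)\le \binom{q+m-1}{m}q^{-m}$; both are $o(q^{-3/2})$ for $m\asymp\log q$, so the sketch closes. Just be aware that the error terms you call ``higher order'' include sums like $\sum_i c_i^2 i/q^i$ and cross terms from the product expansion, and you need to check these are $O(q^{-3/2})$ uniformly over the truncated region rather than merely termwise.
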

In fact, a lower bound of order $q^{-1}$ is also known \cite{ABT93}, and this theorem was given another proof in \cite{BSG18}.

Our results imply that all these statements hold for $N_i(f)$ and $N_i'(f)$ as well, albeit for a much lower upper bound on the highest degree $i$.

Actually Theorem \ref{thm: uniform low deg} follows from the following more general result on the joint distributions of all multiplicities of irreducible factors up to a given degree.
\begin{theorem}[{\hspace{1sp}\cite[Theorem 3.1]{ABT93}}]
\label{thm: unif root TV}
Let $N_\phi(\overline{f})$ denote the multiplicity of an irreducible factor $\phi$ in $\overline{f}$. Let $Z_\phi$ denote independent geometric random variables of parameter $q^{-\deg(\phi)}$. Then for all $N\geq 1$,
\begin{equation*}
    d_{TV}\left((N_\phi(\overline{f}))_{\deg(\phi)\leq N},(Z_\phi)_{\deg(\phi)\leq N}\right)=O\left(N\exp\left(-\frac{n}{2N}\log\frac{4}{3}\right)\right).
\end{equation*}
\end{theorem}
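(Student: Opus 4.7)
The plan is to invoke the \emph{conditioning relation} that is the cornerstone of the theory of logarithmic combinatorial structures. Since $\overline{f}$ is uniform on the $q^n$ monic polynomials of degree $n$, and unique factorization in $\F_q[x]$ bijectively identifies each such polynomial with a tuple $(m_\phi)_{\deg(\phi)\leq n}$ of nonnegative multiplicities satisfying $\sum_\phi m_\phi\deg(\phi)=n$, one has
\begin{equation*}
  \P\bigl(N_\phi(\overline{f})=m_\phi\text{ for all }\deg(\phi)\leq n\bigr) = q^{-n}
\end{equation*}
on this constraint. The joint PMF of the independent geometrics, on the same event, equals $\prod_\phi(1-q^{-\deg(\phi)})\cdot q^{-n}$, also constant in $(m_\phi)$. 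Matching normalizations yields
\begin{equation*}
  \mathcal{L}\bigl((N_\phi(\overline{f}))_{\deg(\phi)\leq n}\bigr) = \mathcal{L}\bigl((Z_\phi)_{\deg(\phi)\leq n}\bigm|T=n\bigr),\qquad T := \sum_{\deg(\phi)\leq n}\deg(\phi)\,Z_\phi.
\end{equation*}

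Next I would split $T=T_0+T_1$ with $T_0=\sum_{\deg(\phi)\leq N}\deg(\phi)\,Z_\phi$ and $T_1$ its independent complement. A standard manipulation converts the conditional-vs-unconditional TV distance into
\begin{equation*}
  d_{TV}\bigl((N_\phi)_{\deg(\phi)\leq N},(Z_\phi)_{\deg(\phi)\leq N}\bigr) = \tfrac12\,\E\!\left[\left|\frac{\P(T_1=n-T_0)}{\P(T=n)}-1\right|\right].
\end{equation*}
The task therefore reduces to showing that $\P(T_1=n-s)$ varies only slightly as $s$ ranges over the typical support of $T_0$. Since $\E T_0 = \sum_{\deg(\phi)\leq N}\deg(\phi)\,q^{-\deg(\phi)}/(1-q^{-\deg(\phi)}) = O(N)$ with exponential upper tails, a Chernoff bound shows $T_0$ is concentrated on values $\ll n$; and writing $\P(T=n) = \sum_s \P(T_0=s)\P(T_1=n-s)$, one expands $\P(T_1=n-T_0)/\P(T=n) = (1+\epsilon(T_0))/(1+\E\epsilon(T_0))$ where $\epsilon(s) := \P(T_1=n-s)/\P(T_1=n) - 1$. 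The TV distance essentially becomes $\tfrac12\,\E|\epsilon(T_0)-\E\epsilon(T_0)|$, and the whole problem reduces to uniformly bounding $|\epsilon(s)|$ for $s$ in the typical range of $T_0$.

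The main obstacle is this quantitative smoothness estimate with its explicit exponential rate. The PGF of $T_1$ factors via the Euler product
\begin{equation*}
  \sum_j\P(T_1=j)\,t^j = \prod_{N<\deg(\phi)\leq n}\frac{1-q^{-\deg(\phi)}}{1-q^{-\deg(\phi)}\,t^{\deg(\phi)}},
\end{equation*}
so $\P(T_1=j)$ equals $\prod_{N<\deg(\phi)\leq n}(1-q^{-\deg(\phi)})$ times $q^{-j}$ times the count of monic polynomials of degree $j$ having no irreducible factor of degree $\leq N$. An inclusion--exclusion expansion expresses this count via partial sums of the coefficients of $\prod_{\deg(\phi)\leq N}(1-q^{-\deg(\phi)}t^{\deg(\phi)})$; controlling these partial sums---either by a saddle-point analysis of the contour integral computing $[t^n]$ of the generating function, or by a direct combinatorial argument exploiting that each Euler factor is a mild perturbation when $\deg(\phi)\geq N+1$---produces the required rate $\exp(-\tfrac{n}{2N}\log(4/3))$. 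The $N$ prefactor emerges naturally from the $N$ distinct degrees contributing to $T_0$, while the constant $\log(4/3)$ arises as the optimal tilt in the saddle-point step. Extracting the explicit constant is technical but routine within the ABT framework.
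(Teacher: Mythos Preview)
The paper does not prove this statement; it is quoted verbatim from \cite[Theorem~3.1]{ABT93} as background on the uniform model, with no proof given in the present paper. So there is no ``paper's own proof'' to compare against here.

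That said, your sketch is precisely the Arratia--Barbour--Tavar\'e conditioning-relation approach: the identification of $(N_\phi(\overline{f}))$ with $(Z_\phi)$ conditioned on $T=n$, the split $T=T_0+T_1$, and the reduction to the smoothness of $s\mapsto \P(T_1=n-s)$ are exactly how \cite{ABT93} proceeds. The outline is correct. The one place you have genuinely deferred the work is the last paragraph: the explicit rate $\exp\bigl(-\tfrac{n}{2N}\log\tfrac{4}{3}\bigr)$ does not fall out of a generic saddle-point bound, and in \cite{ABT93} it comes from a careful combinatorial estimate on the number of $N$-rough monic polynomials (those with no irreducible factor of degree $\leq N$), specifically via a recursion and an elementary inequality that produces the factor $(3/4)^{\lfloor n/N\rfloor}$. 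Your description of this step as ``technical but routine'' is accurate in spirit but hides the only nontrivial calculation; if you were to write this out fully you would want to follow the recursive argument in \cite{ABT93} rather than attempt a contour integral, which would not readily yield the clean constant $\log(4/3)$.
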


Finally, we also mention some results on the high degree factors. Recall that the \emph{Poisson-Dirichlet} process is a random variable taking values in infinite vectors $(x_i)$ with $x_1\geq x_2\geq \dotsc$ and $\sum x_i=1$. It has a natural description in terms of the following stick-breaking process. Let $U_i$ be independent uniform random variables on $[0,1]$, and let $V_i=U_i\prod_{j<i} (1-U_j)$. One can think of the $V_i$ as sampled by placing points inductively, uniformly on the rightmost interval. Then the Poisson-Dirichlet process is given by the sorted lengths of the intervals defined by the points $V_i$.

\begin{theorem}[{\hspace{1sp}\cite[Remark 5.13]{ABT93}}]
Let $L=(L_i(\overline{f})/n)$ denote the normalized degrees of the irreducible factors of $\overline{f}$ a uniformly chosen monic polynomial of degree $n$, in descending order. Then $L$ converges to the Poisson-Dirichlet process.
\end{theorem}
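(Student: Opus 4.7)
The plan is to establish convergence via size-biased sampling, reducing the claim to the characterization of Poisson--Dirichlet as the sorted GEM sequence. Since the map from $\mathrm{GEM}(1) = (V_i)_{i\ge 1}$ (with $V_i = U_i \prod_{j<i}(1-U_j)$ and $U_i$ i.i.d.\ Uniform$[0,1]$) to Poisson--Dirichlet is given by sorting, which is continuous on decreasing probability vectors summing to $1$, it suffices to prove that a size-biased permutation $(L^*_i(\overline{f})/n)_{i\ge 1}$ converges in finite-dimensional distributions to $(V_i)_{i\ge 1}$. Here each $L^*_i$ is obtained by inductively picking an irreducible factor of the residual polynomial with probability proportional to $(\text{multiplicity})\times(\text{degree})$, and then removing one copy of the chosen factor.

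For the first pick, the key input is the formula $\P(L^*_1 = k) = (k/n)\,\E[N'_k(\overline{f})]$, which one computes via $\P(\phi^m \mid \overline{f}) = q^{-mk}$ for each monic irreducible $\phi$ of degree $k$ and the prime polynomial theorem $\pi(k) = q^k/k + O(q^{k/2}/k)$. This yields
\begin{equation*}
\E[N'_k(\overline{f})] = \pi(k)\sum_{m=1}^{\lfloor n/k\rfloor} q^{-mk} = \frac{1}{k} + O\!\left(\frac{1}{k\,q^{k/2}}\right)
\end{equation*}
uniformly in $k\le n$, and hence $L^*_1/n$ converges weakly to $U_1 \sim \mathrm{Uniform}[0,1]$, matching the first GEM variable.

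Next I would establish the recursion. Conditioning on the first pick being a fixed irreducible $\phi_0$ of degree $k$ and setting $g := \overline{f}/\phi_0$, a short calculation of the joint law shows that $g$ has conditional distribution on monic polynomials of degree $n-k$ with density proportional to $1 + v_{\phi_0}(g)$, which is within total variation $O(q^{-k})$ of the uniform law (since $\E_{\mathrm{unif}}[v_{\phi_0}(g)] = O(q^{-k})$). Iterating this step, each subsequent ratio $L^*_{j+1}/(n - L^*_1 - \cdots - L^*_j)$ converges to an independent $\mathrm{Uniform}[0,1]$, which assembled together yields the GEM structure for $(L^*_i/n)$ in the limit.

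The main obstacle is controlling the accumulated conditional error across the finitely many iterations. Because the target PD limit has $V_j > 0$ almost surely, with high probability every pick $L^*_j$ is of order $\Theta(n)$, so that individual errors $q^{-L^*_j}$ are exponentially small; making this uniform requires care on the tail event that some pick is small, particularly for small fixed $q$. A cleaner, more structural alternative I would consider is to exploit the conditioning relation underlying Theorem~\ref{thm: unif root TV}, which realizes $(N_\phi(\overline{f}))_\phi$ as independent geometric variables $Z_\phi$ of parameter $q^{-\deg\phi}$ conditioned on $\sum_\phi \deg(\phi)\,Z_\phi = n$. Combined with the logarithmic growth $\sum_{\deg\phi = i}\E Z_\phi \sim 1/i$, this places the model squarely inside the framework of logarithmic combinatorial assemblies developed in \cite{ABT03}, where the Poisson--Dirichlet limit is a general theorem that one can simply invoke.
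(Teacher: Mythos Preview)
The paper does not prove this theorem at all: it is quoted, with attribution to \cite[Remark 5.13]{ABT93}, purely as background in the survey Section~1.1 on the uniform model, alongside Theorems~\ref{thm: uniform low deg}--\ref{thm: total number unif}. There is therefore nothing to compare your argument against.

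That said, your sketch is a reasonable and essentially standard route to the result. The size-biased computation $\P(L^*_1=k)=(k/n)\,\E[N'_k(\overline{f})]=1/n+O(q^{-k/2}/n)$ is correct, and your conditioning calculation that the residual after removing one copy of $\phi_0$ has density proportional to $1+v_{\phi_0}(g)$, hence within $O(q^{-k})$ of uniform, is also right. The gap you yourself flag---controlling the iterated error when some early pick $L^*_j$ happens to be small---is real but manageable: for any fixed number of picks $J$ and any $\varepsilon>0$, the event $\min_{j\le J}L^*_j<\varepsilon n$ has limiting probability $O(J\varepsilon)$ under the target GEM law, and on the complement the accumulated TV error is $O(Jq^{-\varepsilon n})$, so one lets $\varepsilon\to 0$ after $n\to\infty$. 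Your alternative of invoking the conditioning relation behind Theorem~\ref{thm: unif root TV} together with the general machinery of \cite{ABT03} is cleaner and is in fact how the result is packaged in that reference.
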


Much is also known about the total number of factors. It's known that the number of factors is close to a Poisson of mean $H_n$, where $H_n$ is the $n$th harmonic number. 
\begin{theorem}[{\hspace{1sp}\cite[Theorem 6.8]{ABT93}}]
\label{thm: total number unif}
Let $N'(\overline{f})$ denote the total number of irreducible factors for a random monic polynomial of degree $n$. Let $Z$ denote a Poisson random variable of mean $H_n$. Then
\begin{equation*}
    d_{TV}(N'(\overline{f}), Z)=O\left(\log^{-\frac{1}{2}}n\right).
\end{equation*}
\end{theorem}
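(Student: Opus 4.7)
The plan is to exploit the \emph{conditioning relation} for logarithmic combinatorial structures. One first observes that the joint law $(N_i'(\overline{f}))_{i=1}^n$ agrees with the independent negative binomials $(Y_i)_{i=1}^n$ of Theorem~\ref{thm: uniform low deg} conditioned on the event $T := \sum_i i Y_i = n$. Indeed, the identity $\prod_{i}(1-y^i)^{-\pi(i)} = (1-qy)^{-1}$ together with the counting formula $\binom{\pi(i)+c_i-1}{c_i}$ for the number of ways to select a multiset of $c_i$ irreducibles of degree $i$ yields matching probability mass functions, after checking that the normalizing constant $\P(T=n) = \prod_{i=1}^n(1-q^{-i})^{\pi(i)}$ is what is needed.

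Given this, I would split the proof into (i) an unconditional Poisson approximation for $S := \sum_{i=1}^n Y_i$, and (ii) a transfer from the unconditional to the conditional law given $T=n$. For (i), the $Y_i$ are independent integer-valued with $\E Y_i = \pi(i)q^{-i}/(1-q^{-i}) \sim 1/i$ and $\E Y_i(Y_i-1) = O(q^{-i}/i)$, so a standard Stein's method bound (as in Barbour--Holst--Janson) for sums of independent non-negative integer random variables yields
\begin{equation*}
    d_{TV}\bigl(S,\Pois(\mu_n)\bigr) = O(1/\log n),
\end{equation*}
where $\mu_n = \sum_i \E Y_i = H_n + O(1)$; replacing $\Pois(\mu_n)$ by $\Pois(H_n)$ is then a cheap Poisson-vs-Poisson comparison.

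For (ii), I would use a quantitative local limit theorem for $T$: a direct computation gives $\E T = n$ and $\Var T = \Theta(n^2)$, so $\P(T=n) = \Theta(1/n)$, and the conditional-to-unconditional ratio
\begin{equation*}
    \frac{\P(S=s\mid T=n)}{\P(S=s)} = \frac{\P(T=n\mid S=s)}{\P(T=n)}
\end{equation*}
must be controlled in an integrated sense in $s$. The main obstacle will be this conditioning step: the summands $iY_i$ with $i$ close to $n$ dominate $\Var T$ but contribute negligibly to $S$, so naive decoupling fails. One must split $T = T_{\mathrm{low}}+T_{\mathrm{high}}$ at an appropriate threshold, bound the effect of $T_{\mathrm{low}}$ on the conditioning directly, and invoke the local limit theorem only for $T_{\mathrm{high}}$, where the summands are effectively independent of $S$. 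The rate $O(\log^{-1/2}n)$ reflects the ratio of the typical fluctuation $\sqrt{\mu_n}\sim\sqrt{\log n}$ of $S$ to the conditioning scale, matching the analogous rate for total cycle counts in a uniform random permutation and consistent with Theorem~\ref{thm: rand poly and rand perm}.
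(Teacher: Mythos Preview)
The paper does not prove this statement; it is quoted as background from \cite[Theorem~6.8]{ABT93} in the survey Section~1.1 and is never revisited. There is therefore no proof in the paper to compare your proposal against.

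That said, your outline is essentially the Arratia--Barbour--Tavar\'e approach: the conditioning relation you write down is exactly their starting point, and the two-step programme (unconditional Poisson approximation for $S=\sum Y_i$, then removal of the conditioning on $T=\sum iY_i=n$) is the mechanism behind \cite[Theorem~6.8]{ABT93} and, more generally, their theory of logarithmic combinatorial structures \cite{ABT03}. Your step~(i) is routine and the $O(1/\log n)$ bound there is correct. Step~(ii) is where all the work lies, and your description is accurate in spirit but underspecified: the actual argument in \cite{ABT93} does not proceed via a local limit theorem for $T$ directly, but rather couples the conditioned process to the unconditioned one by exploiting size-biasing and a refined ``Feller coupling''-type construction adapted to the negative-binomial case. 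The $\log^{-1/2}n$ rate emerges from the interaction between the Poisson fluctuations of $S$ and the conditioning, exactly as you say, but turning that heuristic into a proof requires more than splitting $T=T_{\mathrm{low}}+T_{\mathrm{high}}$; one needs uniform control of $\P(T=n\mid S=s)/\P(T=n)$ over the bulk of the $S$-distribution, and this is delicate because $S$ and $T$ are genuinely dependent through every summand.
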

There is also recent work of Elboim and Gorodetsky, who obtained optimal total variation bounds to the number of cycles in a random permutation \cite{EG20}, which in particular implies Theorem \ref{thm: total number unif}.

\subsection{Main results}
Our main results establish a form of universality for the number of low degree irreducible factors of random polynomials over finite fields. Since our strongest results require some technical assumptions and are only effective for certain ranges of the parameters we consider, we have left their statements for Section \ref{sec: main results}. Instead, we state some simpler consequences which apply with only mild restrictions.

Let $f\in \F_p[x]$ be the random polynomial defined by
\begin{equation*}
    f(x)=\sum_{i=0}^n \varepsilon_ix^i,
\end{equation*}
with the $\varepsilon_i$ independently drawn from some distribution $\mu$ on $\F_p$. Let $\eta=1-\max_{x\in\F_p}\mu(x)$. This parameter has previously appeared in the study of universality for random matrices over $\F_p$ (see e.g. \cite{LMN21}), and $\eta>0$ ensures that $\mu$ is not concentrated at a single point. Let $\overline{f}(x)$ be a uniformly random monic polynomial in $\F_p[x]$ of degree $n$. For any polynomial $g$, let $N_i(g)$ denote the number of distinct irreducible factors of $g$ of degree $i$, and let $N_i'(g)$ denote the total number of irreducible factors of $g$ of degree $i$, counted with multiplicity. We note that $N_1(g)$ and $N'_1(g)$ exclude factors of $x$, since the number of these clearly depends on the distribution $\mu$.

Our most general results apply even when $p$ grows rapidly with $n$, but require the coefficients to lie in $\F_p$ for a prime $p$. We first consider the joint distribution of the number of distinct irreducible factors of degrees not too large in $n$.
\begin{corollary}
\label{cor: no mult}
Suppose that $p\leq e^{n^{\frac{1}{8}}}$. Then for any $\delta>0$,
\begin{equation*}
    d_{TV}\left((N_i(f))_{i\leq n^{\frac{1}{8}-\delta}},(N_i(\overline{f}))_{i\leq n^{\frac{1}{8}-\delta}}\right)=O(n^{-\delta}),
\end{equation*}
where the implicit constant depends only on $\eta$ and $\delta$.
\end{corollary}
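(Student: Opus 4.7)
The plan is to reduce the claim to a multi-point equidistribution theorem for the values of $f$, then invoke the main technical result of the paper. Fix $k = \lfloor n^{1/8 - \delta} \rfloor$ and set $A_k = \bigcup_{i \leq k} \F_{p^i}$. For any polynomial $g \in \F_p[x]$, the tuple $(N_i(g))_{i \leq k}$ is a deterministic function of the set of zeros of $g$ in $A_k$: an irreducible factor of $g$ of degree $i \leq k$ corresponds to a Galois orbit of size $i$ inside $A_k \cap \{g = 0\}$, and $N_i(g)$ is recovered by enumerating such orbits. Hence by the data-processing inequality,
$$d_{TV}\bigl((N_i(f))_{i \leq k}, (N_i(\overline f))_{i \leq k}\bigr) \leq d_{TV}\bigl((f(\alpha))_{\alpha \in A_k}, (\overline f(\alpha))_{\alpha \in A_k}\bigr).$$

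I would then apply the main multi-point equidistribution theorem of the paper (stated in Section~\ref{sec: main results}), which bounds exactly this total variation distance between value vectors in the non-uniform and uniform models. Substituting $k \leq n^{1/8-\delta}$ and $p \leq e^{n^{1/8}}$ verifies the hypotheses of that theorem and produces a TV bound of order $n^{-\delta}$, giving the corollary at once.

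The substantive work, and thus the main obstacle, lies in the multi-point equidistribution theorem itself. The underlying Fourier calculation factors cleanly: each character of the value space is indexed by a tuple $\xi = (\xi_\alpha)_{\alpha \in A_k}$, and the characteristic function of $(f(\alpha))_\alpha$ becomes
$$\prod_{i=0}^{n} \hat\mu\Bigl(\Tr\Bigl(\sum_{\alpha \in A_k} \xi_\alpha \alpha^i\Bigr)\Bigr),$$
where $\hat\mu \colon \F_p \to \C$ is the $\F_p$-Fourier transform of the coefficient law $\mu$ and $\Tr$ is the trace to $\F_p$. The problem thus reduces to single-variable decay of $\hat\mu$ (supplied by the Breuillard--Varj\'u machinery for $p$ prime, quantitatively in terms of $\eta$) combined with a count of how many of the traces $t_i(\xi) = \Tr(\sum_\alpha \xi_\alpha \alpha^i)$ are nonzero as $i$ ranges over $\{0, 1, \ldots, n\}$. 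Here the sequence $v_i(\xi) = \sum_\alpha \xi_\alpha \alpha^i$ satisfies a linear recurrence whose characteristic polynomial has roots exactly the support of $\xi$, so $v_i$ cannot vanish on long stretches once $\xi \neq 0$. The hardest step is then to sum the single-character Fourier estimates over all nontrivial $\xi$: this requires a careful stratification (for instance by the support size of $\xi$) combined with a linear-algebraic analysis of when trace cancellations can occur, and it is precisely this summation that fixes the admissible parameter regime $p \leq e^{n^{1/8}}$, $k \leq n^{1/8-\delta}$.
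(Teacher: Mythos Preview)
Your reduction via the data-processing inequality is valid, but the subsequent step fails: the paper contains no theorem bounding $d_{TV}\bigl((f(\alpha))_{\alpha\in A_k},(\overline f(\alpha))_{\alpha\in A_k}\bigr)$ in the regime you need, and such a bound does not follow from the Fourier estimates available. The value vector lives in a space of size roughly $p^{|A_k|}\sim p^{p^k}$, so the Fourier sum you describe runs over that many characters. Even with stratification by support size, a character $\xi$ supported on $H$ points has $d$ of order $NH$ in the notation of Theorem~\ref{thm: konyagin fourier bound}, giving a bound of order $\exp\bigl(-cn/(NH\log p)\bigr)$; but the number of such $\xi$ is of order $p^{O(NH)}$, and $H$ ranges all the way up to $|A_k|\sim p^N$. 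The sum is controlled only when $N\log p=O(\log n)$, which for $p$ near $e^{n^{1/8}}$ forces $N=O(1)$, far short of $N=n^{1/8-\delta}$.

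The paper avoids this blow-up by \emph{not} comparing the full value vectors. Instead it matches joint moments of the $N_i$ up to order $H=N\log n$ (Proposition~\ref{prop: moment discrep bound no mult}), so that only $H$-tuples of roots ever appear, and then converts moment closeness to a pointwise bound via Proposition~\ref{prop: total variation bound}, with the tail controlled by Lemma~\ref{lem: moment tail bound} (using that $\sum_{i\le N}N_i(\overline f)$ is Poisson-like). The proof of the corollary then simply invokes Theorems~\ref{thm:main small p no mult} and~\ref{thm: main large p no mult}, splitting at $p=n^{1/4}$; in the large-$p$ range, low-order roots are handled separately via the Hal\'asz-type bound and Lemma~\ref{lem: bound on prob of low order roots}. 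The moment truncation, not a value-vector equidistribution, is the mechanism that makes $N=n^{1/8-\delta}$ accessible.
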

We also consider the number of irreducible factors counted with multiplicity.
\begin{corollary}
\label{cor: with mult}
Suppose that $p\leq e^{n^{\frac{1}{13}}}$. Then for any $\delta>0$,
\begin{equation*}
    d_{TV}\left((N'_i(f))_{i\leq n^{\frac{1}{13}-\delta}},(N'_i(\overline{f}))_{i\leq n^{\frac{1}{13}-\delta}}\right)=O(n^{-3\delta}),
\end{equation*}
where the implicit constant depends only on $\eta$ and $\delta$.
\end{corollary}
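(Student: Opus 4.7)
The plan is to extend the approach for Corollary \ref{cor: no mult} (which tracks distinct factors via $f(\alpha)$) to handle multiplicities via Hasse derivatives. For an irreducible $\phi$ of degree $i$ with root $\alpha \in \F_{p^i}$, the multiplicity $N_\phi(f)$ equals the smallest $m$ such that $H^{(m)}f(\alpha)\neq 0$, where $H^{(m)}$ denotes the $m$-th Hasse derivative. Thus if I can show that the joint distribution of the values $H^{(j)}f(\alpha_\phi)$, as $\phi$ ranges over irreducibles of degree at most $N = n^{1/13-\delta}$ and $j$ over $0,1,\ldots,K$ for a suitable truncation $K$, is close in total variation to uniform on the corresponding product space, I can read off the multiplicity statistics. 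Combined with Theorem \ref{thm: unif root TV} applied to $\overline{f}$ and a triangle inequality, this yields the claim for $(N'_i(f))$ by summing over all $\phi$ of each given degree.

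First I truncate multiplicities at a threshold $K$: for $\overline{f}$, Theorem \ref{thm: unif root TV} shows that $N_\phi(\overline{f}) > K$ has probability $\leq p^{-iK}$, summing to a negligible contribution. The tail for $f$ requires separate Fourier input but reduces to the same estimates used below. With multiplicities truncated at $K$, it suffices to bound
\begin{equation*}
    d_{TV}\!\left((H^{(j)}f(\alpha_\phi))_{\deg(\phi)\leq N,\, 0\leq j\leq K},\; \text{uniform on } \prod_{\phi} \F_{p^{\deg(\phi)}}^{K+1}\right),
\end{equation*}
with the product running over irreducibles $\phi$ of degree $\leq N$. By Parseval, this reduces to uniform control of the Fourier coefficients
\begin{equation*}
    \E\!\left[\psi\!\left(\sum_{\phi,\, j} \Tr_{\F_{p^{\deg(\phi)}}/\F_p}\!\bigl(c_{\phi,j}\, H^{(j)}f(\alpha_\phi)\bigr)\right)\right]
\end{equation*}
over nonzero frequency vectors $c = (c_{\phi,j})$, for $\psi$ a fixed nontrivial additive character of $\F_p$.

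Since $H^{(j)}f(x) = \sum_{i\geq j} \binom{i}{j} \varepsilon_i x^{i-j}$ is linear in the independent coefficients $\varepsilon_i$, the above expectation factors as a product over $i=0,\ldots,n$ of one-dimensional character expectations $\E[\psi(\xi_i \varepsilon_i)]$, where $\xi_i \in \F_p$ depends linearly on $c$ through the quantities $\binom{i}{j}\alpha_\phi^{i-j}$. The main obstacle, and the technical heart of the argument, is producing decay strong enough for this product to beat a union bound over roughly $p^{N(K+1)}$ nonzero frequencies. Here I would extend the Breuillard--Varj\'u analysis of the $ax+b$ walk from \cite{BV19poly,BV19walk} — which handles a single $\alpha$ with no derivatives — to the joint multi-point, multi-derivative setting, the new ingredient being to control the interplay of distinct $\alpha_\phi$'s and of the derivative coefficients $\binom{i}{j}$ within a single $\alpha_\phi$. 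The parameter $\eta = 1 - \max_x \mu(x) > 0$ enters through the one-step bound $|\E[\psi(\xi_i\varepsilon_i)]| \leq 1 - c\eta$ for $\xi_i \neq 0$.

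Finally, I balance the three error sources: the Fourier estimate (of order roughly $\exp(-c\eta n / (N \cdot \mathrm{poly}(K,\log p)))$ after the extension), the union-bound cost $p^{N(K+1)/2}$, and the multiplicity-truncation error of order $N p^{-KN}$. Under the hypothesis $p \leq e^{n^{1/13}}$, choosing $K$ and $N$ of comparable size $\asymp n^{1/13-\delta}$ makes all three contributions $O(n^{-3\delta})$. The exponent $1/13$, rather than the $1/8$ of Corollary \ref{cor: no mult}, reflects precisely the extra factor of $K+1$ introduced by tracking Hasse derivatives alongside values in the dimension count that the Fourier bound has to absorb.
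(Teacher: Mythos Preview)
Your outline has a genuine gap at the union-bound step. You propose to show that the \emph{entire} vector $(H^{(j)}f(\alpha_\phi))_{\deg\phi\le N,\,0\le j\le K}$ is close to uniform in total variation, via Parseval and a union bound over nonzero frequencies. But the ambient $\F_p$-dimension of that vector is
\[
\sum_{i\le N}(K+1)\,i\,\pi(i)\ \sim\ (K+1)\,p^{N},
\]
so the number of nonzero frequencies is roughly $p^{(K+1)p^{N}}$, not the $p^{N(K+1)/2}$ you write. No per-frequency bound of the shape $\exp\bigl(-c\eta n/(N\cdot\mathrm{poly}(K,\log p))\bigr)$ can absorb a cost that is doubly exponential in $N$. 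With $N=n^{1/13-\delta}$ this union bound is hopeless.

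The paper avoids this by \emph{not} trying to control the full joint law. Instead it matches joint moments of order $\le H$ of $(N'_i(f))_{i\le N}$ with those of the uniform model (Proposition~\ref{prop: moment discrep bound with mult}): each such moment expands into at most $(K+1)^H p^{HN}$ terms, every term involving at most $H$ roots simultaneously, so the relevant Fourier space has dimension $d\le NH(K+1)$, which is polynomial in $n$. Proposition~\ref{prop: total variation bound} then turns approximate moment matching into a pointwise (hence TV) bound, summed over a support of size at most $(n+1)^N$. Your proposal is missing this moment-matching reduction; the ``union-bound cost $p^{N(K+1)/2}$'' you quote is essentially the paper's $p^{HN}(K+1)^H$ after this reduction, not the cost of the global Parseval argument you describe.

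There is a second issue you do not address: the extended Breuillard--Varj\'u/Konyagin Fourier bound (Theorem~\ref{thm: konyagin fourier bound}) only applies when the relevant $\alpha_\phi$ have sufficiently high multiplicative order. The paper therefore splits into two regimes. For small $p$ (here $p\le n^{4/13}$) it uses the crude bound of Proposition~\ref{prop: weak fourier bound}, which works for all roots and feeds into Theorem~\ref{thm: main small p with mult}. For large $p$ it restricts to high-order roots, and separately shows via the Hal\'asz-type Proposition~\ref{prop:halasz-bound} that the probability of \emph{any} low-order root of degree $\le N$ is $O\bigl((p^{-1}+n^{-1/2})\,\mathrm{poly}\bigr)$; this term, not the Fourier error, is what produces the polynomial $O(n^{-3\delta})$ in the statement (Theorem~\ref{thm: main large p with mult}, Lemmas~\ref{lem: TV bound} and~\ref{lem: bound on prob of low order roots}). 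Your sketch does not distinguish high- and low-order roots and so would not go through in the large-$p$ regime.
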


Together with Theorem \ref{thm: uniform low deg}, this immediately implies the following limit theorems.
\begin{corollary}
\label{cor: universal limit}
Let $X_i$ and $Y_i$ be independent binomial and negative binomial random variables of parameters $\pi(i)$ and $p^{-i}$. For any $\delta$, if $p\leq e^{n^\frac{1}{8}}$, then
\begin{equation*}
    d_{TV}\left((N_i(f))_{2\leq i\leq n^{\frac{1}{8}-\delta}},(X_i)_{2\leq i\leq n^{\frac{1}{8}-\delta}}\right)=O(n^{-\delta}),
\end{equation*}
and if $p\leq e^{n^\frac{1}{13}}$, then
\begin{equation*}
    d_{TV}\left((N'_i(f))_{2\leq i\leq n^{\frac{1}{13}-\delta}},(Y_i)_{2\leq i\leq n^{\frac{1}{13}-\delta}}\right)=O(n^{-3\delta}).
\end{equation*}
\end{corollary}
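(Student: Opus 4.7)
The plan is to apply the triangle inequality for total variation distance, interposing the uniform model $\overline{f}$ as an intermediate distribution. Both inequalities follow from the same schema.

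For the first inequality, I would set $N = n^{1/8 - \delta}$ and write
\[
d_{TV}\bigl((N_i(f))_{2 \le i \le N}, (X_i)_{2 \le i \le N}\bigr) \le d_{TV}\bigl((N_i(f))_{2 \le i \le N}, (N_i(\overline{f}))_{2 \le i \le N}\bigr) + d_{TV}\bigl((N_i(\overline{f}))_{2 \le i \le N}, (X_i)_{2 \le i \le N}\bigr).
\]
Since total variation distance cannot increase under projection to a subset of coordinates, the first term is bounded by Corollary \ref{cor: no mult}, giving $O(n^{-\delta})$. By the same monotonicity, the second term is bounded using Theorem \ref{thm: uniform low deg}; substituting $N = n^{1/8-\delta}$ yields an estimate of order $n^{1/8-\delta}\exp\bigl(-\tfrac{1}{2}n^{7/8+\delta}\log(4/3)\bigr)$, which decays faster than any polynomial in $n$ and is therefore absorbed into the $O(n^{-\delta})$ term.

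The second inequality follows by exactly the same argument with $N = n^{1/13-\delta}$, using Corollary \ref{cor: with mult} for the first term (which contributes $O(n^{-3\delta})$) and the $N_i'$ half of Theorem \ref{thm: uniform low deg} for the second (which is again super-polynomially small, since $n/N = n^{12/13+\delta}$).

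The only subtle point is a bookkeeping issue with the $i=1$ coordinate: Theorem \ref{thm: uniform low deg} follows the convention (used in Section 1.1) that includes the factor $x$ in $N_1(\overline{f})$ and $N_1'(\overline{f})$, while Corollaries \ref{cor: no mult} and \ref{cor: with mult} use the main-text convention that excludes it. Restricting to $i \ge 2$, as the statement does, sidesteps this discrepancy entirely since the two definitions agree for $i \ge 2$. There is no substantive obstacle; the corollary is a formal consequence of the two preceding universality corollaries together with the known uniform-model approximation.
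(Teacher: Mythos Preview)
Your proposal is correct and matches the paper's approach exactly: the paper simply states that the corollary follows immediately from Corollaries~\ref{cor: no mult} and~\ref{cor: with mult} together with Theorem~\ref{thm: uniform low deg}, which is precisely the triangle-inequality argument you have spelled out. Your observation about the $i=1$ bookkeeping is also the reason the paper restricts to $i\ge 2$ here, as explained in the remark following the statement.
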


\begin{remark}
We note that it is necessary to exclude the irreducible factor $x$, because whether it divides $f$ and with what multiplicity is easily seen to be dependent on $\mu(0)$. In particular, the multiplicity of $x$ as a factor is a truncated geometric random variable of parameter $\mu(0)$. We will ignore factors of $x$, and it is safe to simply assume that $\mu(0)=0$, although this is not necessary for our results to hold. This essentially amounts to randomizing the degree and conditioning on the constant term being non-zero, which would ultimately not affect our arguments.

This is also why Corollary \ref{cor: universal limit} is stated with $i\geq 2$. For $i=1$, one needs to specifically remove $0$, and so the limiting distribution should be binomial and negative binomial with parameters $\pi(1)-1=p-1$ and $p^{-1}$. One could use the same arguments as in \cite{ABT93} to include $i=1$ as well.
\end{remark}

\begin{remark}
There are many models we could draw $\overline{f}$ from, whether uniformly from all polynomials of degree $n$, those of degree at most $n$, or monic polynomials of degree $n$. It makes no difference in our analysis, since the small irreducible factors have basically the same distribution, and indeed our proof uses a moment matching argument which only sees low-order moments, which are nearly identical for all these models. This is easy to see, as the roots do not depend on whether the polynomial is taken to be monic or not, and a uniform polynomial of degree at most $n$ is exponentially likely to have a large degree, and so will have statistics close to that of a random monic polynomial.

For this paper, we will always work with monic polynomials, as they are a bit easier to work with and most results in the literature are on this model.
\end{remark}

\begin{remark}
These results are stated to maximize the degree of the irreducible factors considered. We can take $p$ up to $e^{n^{\frac{1}{4}-\delta}}$ for any
$\delta>0$ at the cost of considering lower degree irreducible factors.
\end{remark}

For a single degree, we can show that the number of irreducible factors of a fixed degree $i$ of $f$ and of the uniform model $\overline{f}$ have approximately the same distribution for $i$ up to $n^{1/2-\epsilon}$. 

\begin{corollary}\label{cor: min}
Suppose that $i \le n^{1/2-2\epsilon}$ and $p < \exp(n^{1/2-\epsilon}/i)$. Then
\[
d_{TV}(N_i(f),N_i(\overline{f})) = O_{\epsilon,\eta}(\exp(-n^{c\epsilon})+\min(\exp(-ci),n^{-c})),
\]
where $c$ is an absolute constant independent of $\eta$ and $\epsilon$. 
\end{corollary}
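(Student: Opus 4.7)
The plan is to match factorial moments $\E[(N_i(f))_k]$ and $\E[(N_i(\overline{f}))_k]$ for all $k$ up to some cutoff $K$, then convert this into a total variation bound. Writing $N_i(g) = \sum_{\phi} \mathbf{1}[\phi \mid g]$ with the sum over monic irreducible polynomials of degree $i$ distinct from $x$, and fixing for each $\phi$ a root $\alpha_\phi \in \F_{p^i}$, one has
\[
\E\bigl[(N_i(f))_k\bigr] = \sum_{\phi_1, \ldots, \phi_k \text{ distinct}} \P\bigl(f(\alpha_{\phi_1}) = \cdots = f(\alpha_{\phi_k}) = 0\bigr).
\]
In the uniform model each such probability equals $p^{-ik}$ whenever $ik \le n$, a condition ensured for all relevant $k$ by $i \le n^{1/2 - 2\epsilon}$, giving the reference value $\approx i^{-k}$.

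For the non-uniform model, I would Fourier expand on $\F_{p^i}^k$ using the composed additive character $\psi_{p^i}(y) = \psi_p(\Tr_{\F_{p^i}/\F_p}(y))$. Factoring by independence of the coefficients,
\[
\P\bigl(f(\alpha_j) = 0 \text{ for all } j\bigr) = \frac{1}{p^{ik}} \sum_{\boldsymbol\xi \in \F_{p^i}^k} \prod_{l=0}^n \hat\mu\Bigl(\Tr\bigl(\textstyle\sum_j \xi_j \alpha_j^l\bigr)\Bigr).
\]
The $\boldsymbol\xi = \mathbf{0}$ term matches the uniform model exactly, and the task is to show the remaining contribution is at most $\exp(-n^{c\epsilon}) \cdot p^{-ik}$. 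This is a multi-point version of the Breuillard--Varj\'u equidistribution estimate and is the crux of the argument: for any nonzero $\boldsymbol\xi$, the sequence $\beta_l := \sum_j \xi_j \alpha_j^l$ is a nontrivial exponential polynomial in $l$ (since the $\alpha_j$ are distinct nonzero elements of $\F_{p^i}$, by Artin independence of characters), and one must show $\Tr(\beta_l) \ne 0$ for a sufficiently large fraction of $l \in \{0, \ldots, n\}$ outside a small exceptional set of frequencies $\boldsymbol\xi$. Combined with $|\hat\mu(t)| \le 1 - c(\eta)$ for $t \ne 0$ and the hypothesis $p^i \le \exp(n^{1/2 - \epsilon})$ (which controls the size of the Fourier sum), this yields the required per-moment error.

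With factorial moments matched to within $\exp(-n^{c\epsilon})$ up to a cutoff $K$, I would apply the standard inequality
\[
d_{TV}(X, Y) \le \sum_{k=1}^K \frac{|\E[(X)_k] - \E[(Y)_k]|}{k!} + \P(X \ge K+1) + \P(Y \ge K+1).
\]
Since $\E[(N_i(\overline{f}))_k] \approx i^{-k}$, the tail decays geometrically in $K$, and optimizing $K$ produces the bound $\min(\exp(-ci), n^{-c})$: the exponential term arises when $i$ is large enough that only a bounded number of moments are needed, while the polynomial term comes from pushing $K$ to the limit allowed by the hypothesis $ik \lesssim n^{1/2 - \epsilon}/\log p$. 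The main obstacle is Step 2: the single-point case $k = 1$ is essentially the Breuillard--Varj\'u result, but extending joint equidistribution to $k \ge 2$ roots requires ruling out degenerate cancellations in the joint Fourier sum arising from algebraic relations among the $\alpha_j$ --- this is the technical extension that the abstract highlights as a main contribution of the paper.
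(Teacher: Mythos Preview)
Your overall architecture (match low-order moments via Fourier analysis, then convert to a TV bound) is the same as the paper's, but there is a genuine gap in the Fourier step. The bound $|\hat\mu(t)| \le 1 - c(\eta)$ for $t \ne 0$ is simply false: for $\mu$ supported on a bounded set such as $\{-1,0,1\}$ one has $|\hat\mu(t)| = 1 - O(\eta/p^2)$, so the trivial bound over $n$ factors gives only $\exp(-c\eta n/p^2)$, useless once $p$ is at all large. Relatedly, the task is \emph{not} to show that $\Tr(\beta_l) \ne 0$ for many $l$ --- that is easy and, by the preceding remark, buys almost nothing. What the Konyagin-type argument actually establishes is that the \emph{lifted} values $\widehat{S}_l \in [-p/2,p/2]$ are of size comparable to $p$ for a positive density of $l$, and the dichotomy is not over an exceptional set of frequencies $\boldsymbol\xi$ but over the roots $\alpha_j$ themselves: either the Fourier coefficient is small, or \emph{every} $\alpha_j$ has multiplicative order at most $C d\log p\log(d\log p)$ (Theorem~\ref{thm: konyagin fourier bound}).

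This forces the paper's key structural step that your sketch omits: one splits $N_i(f) = \overline{N}_i(f) + (\text{low-order part})$, where $\overline{N}_i$ counts only irreducible factors whose roots have high multiplicative order. The moment comparison is carried out for $\overline{N}_i$ using the Konyagin bound, and the low-order factors are controlled by the Hal\'asz-type estimate (Proposition~\ref{prop:halasz-bound}), which gives $\P\bigl(\phi\mid f\bigr) \le (p^{-1} + C\eta^{-1/2}(n/i)^{-1/2})^i$ for any fixed degree-$i$ irreducible $\phi$. Summing over the $O((d\log p)^2)$ low-order $\phi$'s is exactly what produces the second error term $\min(\exp(-ci), n^{-c})$; it does not come from optimising the moment cutoff as you suggest. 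Without this split your argument cannot close, because for low-order $\alpha$ the Fourier sum genuinely fails to equidistribute fast enough.
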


We obtain similar conclusions for linear statistics of the number of irreducible factors of degree up to $n^{1/2-\epsilon}$, for example, the total number of irreducible factors of degree at most $n^{1/2-\epsilon}$. Similar results could also be derived for a bounded or slowly growing number of such statistics. We remark that the threshold $n^{1/2}$ seems to be the fundamental limit of our technique, and it would be very interesting to derive universality results beyond this threshold.

If we fix the finite field, we can in fact handle coefficients in an arbitrary finite field $\F_q$. We now take $\mu$ to be a distribution on $\F_q$, and let $\eta=1-\max_{V\subseteq \F_q}\mu(V)$ where the maximum is taken over all proper $\F_p$ affine subspaces $V$, and let $f$ and $\overline{f}$ be defined as above, but with coefficients in $\F_q$.

\begin{corollary}
\label{cor: with q}
Fix a prime power $q$. Then for some small constant $c$ depending only on $\eta$ and $q$, if $N=cn^{\frac{1}{4}}/\log^{\frac{1}{2}}n$, then
\begin{equation*}
    d_{TV}\left((N_i(f))_{i\leq N},(N_i(\overline{f}))_{i\leq N}\right)=O(e^{-n^{\frac{1}{4}}})
\end{equation*}
and if $N=cn^{\frac{1}{5}}/\log^{\frac{4}{5}}n$, then
\begin{equation*}
    d_{TV}\left((N'_i(f))_{i\leq N},(N'_i(\overline{f}))_{i\leq N}\right)=O(e^{-n^{\frac{1}{5}}}),
\end{equation*}
where the constants depends only on $\eta$ and $q$.
\end{corollary}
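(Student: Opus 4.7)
The plan is to follow the moment matching strategy used for Corollaries \ref{cor: no mult} and \ref{cor: with mult}, adapted from $\F_p$ to $\F_q$. First I would reduce the closeness of the joint distribution $(N_i(f))_{i\leq N}$ (respectively $(N'_i(f))_{i\leq N}$) to that of $(N_i(\overline{f}))_{i\leq N}$ to the comparison of joint factorial moments up to some truncation order $K$. These factorial moments rewrite as sums of the form $\P[\phi_1\mid f,\ldots,\phi_k\mid f]$ over tuples of distinct irreducibles of the appropriate degrees, and for $N'_i$ one further imposes prescribed multiplicities, which encode vanishing conditions on Hasse derivatives. Divisibility by $\phi_j$ of degree $d_j$ is equivalent to vanishing of $f$ at a root $\alpha_j\in\F_{q^{d_j}}$, so the goal becomes showing that the joint distribution of $(f(\alpha_1),\ldots,f(\alpha_k))$ (and Hasse derivatives at those points in the $N'_i$ case) is close in total variation to uniform on the appropriate $\F_p$-vector space; for $\overline{f}$ this joint distribution is exactly uniform, up to the obvious Galois constraints when the $\alpha_j$ share Galois orbits.

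Next I would establish the equidistribution of the vector $(f(\alpha_1),\ldots,f(\alpha_k))$ via Fourier analysis on this space, viewed as an $\F_p$-vector space. Testing a nontrivial $\F_p$-character $\chi$, the characteristic function $\E\chi(f(\alpha_1),\ldots,f(\alpha_k))$ factors as $\prod_{j=0}^n \E\chi_j(\varepsilon_j)$, where $\chi_j$ is the $\F_p$-character on $\F_q$ obtained by pulling $\chi$ back through $\varepsilon\mapsto(\varepsilon\alpha_1^j,\ldots,\varepsilon\alpha_k^j)$. The parameter $\eta=1-\max_V\mu(V)$ over proper $\F_p$-affine subspaces $V\subseteq\F_q$ is exactly what guarantees the single-step bound $|\E\chi_j(\varepsilon_j)|\leq 1-c(\eta,q)$ whenever $\chi_j$ is nontrivial; since $q$ is fixed, this constant does not deteriorate with $n$. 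What remains is an $\F_p$-linear algebra input showing that for any nontrivial $\chi$, a positive proportion of the $\chi_j$ are nontrivial, which amounts to a rank bound on the map $j\mapsto(\alpha_1^j,\ldots,\alpha_k^j)$ and runs parallel to the $\F_p$ treatment in the earlier corollaries. The resulting Fourier bound is exponentially small in $n$ uniformly over $\chi$.

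For $N'_i$ the same scheme applies once the Fourier equidistribution is upgraded to include Hasse derivatives $f^{[m]}(\alpha_j)$, exactly as in the proof of Corollary \ref{cor: with mult}, with the $\F_p$-character pullback and $\eta$-based non-concentration again providing the single-step estimate. The main obstacle, as in the $\F_p$ setting, is balancing the truncation order $K$ of the moments that must be matched against the Fourier decay rate, with the added subtlety that the dimension of the ambient character space grows with $k$ and $N$. Optimizing this trade-off, together with a union bound over the $\sim q^{O(Nk)}$ nontrivial characters and the accumulation of derivative orders up to $\sim N$ in the multiplicity-weighted case, produces the thresholds $N\asymp n^{1/4}/\log^{1/2} n$ and $N\asymp n^{1/5}/\log^{4/5} n$; the logarithmic corrections arise precisely from absorbing this union bound into the Fourier decay $(1-c)^n$, and leave residual errors of the claimed order $e^{-n^{1/4}}$ and $e^{-n^{1/5}}$. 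Because $q$ is fixed, there is no $p$-dependent deterioration anywhere, which is the source of the improvement over Corollaries \ref{cor: no mult} and \ref{cor: with mult}.
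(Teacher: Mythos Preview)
Your proposal is correct and follows essentially the same strategy as the paper. In the paper's organization this argument is already packaged into Theorems~\ref{thm:main small p no mult} and~\ref{thm: main small p with mult} (the ``small $p$'' bounds, which hold over any $\F_q$ via Proposition~\ref{prop: weak fourier bound} and Remark~\ref{rmk: F_q for weak bound}), so Corollary~\ref{cor: with q} is obtained there in one line by specializing those theorems to fixed $q$; you are effectively re-deriving the content of those theorems rather than citing them.
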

Again, together with Theorem \ref{thm: uniform low deg} this implies that the $N_i(f)$ and $N_i'(f)$ converge to independent binomial and negative binomial random variables.

All of the results in this section follow from stronger bounds which are stated in Section \ref{sec: main results}, and their proofs are given there as well.

\begin{remark}
While we focus on a few statistics in this paper, mainly the statistics $N_i(f)$ and $N_i'(f)$, in fact our techniques can handle any reasonable function of the irreducible factors of $f$ and their multiplicity, subject to the restrictions that the irreducible factors have low degree, and that the number of statistics considered is small. We did not find a way to make this precise, but we note that other statistics of interest such as the joint multiplicities of a small number of possible irreducible factors, or the number of irreducible factors of degree $i$ and multiplicity $m$, could be studied with our techniques, and quantitative bounds could be obtained.

In particular, using our methods and Theorem \ref{thm: unif root TV}, one could bound the distance between the joint multiplicities of irreducible polynomials $\phi_1,\dotsc, \phi_k$ and geometric random variables of parameters $q^{-\deg(\phi_1)},\dotsc, q^{-\deg(\phi_k)}$.
\end{remark}

\subsection{Further questions}
Given the wealth of knowledge on the irreducible factors of uniform random polynomials, it is natural to wonder to what extent these distributions are universal. 

\begin{question}
 To what extent are the statistics of the irreducible factors of random polynomials in $\F_q[x]$ universal? That is, let $f=\sum \varepsilon_i x^i$ be a random polynomial with independent and identically distributed coefficients in $\F_q$. What statistics are close to that of a uniformly chosen monic polynomial?
\end{question}

Our results answer this question for low degree factors. Our methods are not suitable for studying the high degree irreducible factors. On the other hand, numerical simulations suggest that even the high degree irreducible factors exhibit universality. Based on this, we make the following conjecture on the maximal degree of an irreducible factor.

\begin{conjecture}
\label{conj: max deg}
Let $f=\sum\varepsilon_i x^i$ be a random polynomial with independent coefficients in $\F_p$. The maximal degree of an irreducible factor of $f$, normalized by the total degree, converges to the maximum of a Poisson-Dirichlet process.
\end{conjecture}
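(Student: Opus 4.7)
The plan is to extend the joint universality of the paper to irreducible factors of much higher degree, and then invoke the Poisson-Dirichlet limit for the uniform model proved by Arratia-Barbour-Tavar\'e. The central obstacle is that the Fourier-analytic framework breaks down well below degree $n$: the Breuillard-Varj\'u estimates on the $ax+b$ walk $X_{t+1} = \alpha X_t + \varepsilon_{t+1}$ in $\F_{p^d}$ require $d$ small enough that $n$ steps suffice for equidistribution, limiting the current method to $d \lesssim n^{1/2}$. To see a factor of normalized degree close to $1$, one must handle $d$ comparable to $n$.

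First I would try to sharpen the equidistribution input. It is not necessary to show that $f(\alpha)$ is close to uniform on all of $\F_{p^d}$; it suffices to control the marginal probability of $f(\alpha)=0$ together with some conditional structure across a Frobenius orbit, which is a much coarser statistic than pointwise equidistribution. One might access this under stronger non-concentration assumptions on $\mu$, for instance a constant lower bound on $\eta$ or smoothness at finer scales than currently exploited. A complementary refinement would use tensorization in $d$: partial mixing on $\F_{p^{d_0}}$ for moderately large $d_0$ could, combined with an independence argument across distinct Frobenius orbits, yield enough control to count factors of degree $d>d_0$.

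Next, suppose one can establish that the joint distribution of $(N_i(f))_{i \leq (1-\epsilon)n}$ is close in total variation to $(N_i(\bar f))_{i \leq (1-\epsilon)n}$. Then the Poisson-Dirichlet limit transfers directly: the sorted normalized degrees $L_k/n$ are determined to leading order by the counts $N_i$ in this range, since the total degree contributed by irreducible factors of degree above $(1-\epsilon)n$ is at most $\epsilon n$ with high probability (this last tightness estimate would need to be proved separately, but should follow from an expectation computation analogous to the uniform case once moment universality is available). The maximum of the sorted degrees then converges to the maximum of a PD sample, i.e., the Golomb-Dickman distribution.

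The hardest step is almost certainly the Fourier analysis at $d = \Theta(n)$, where the walk simply cannot mix on the full group in $n$ steps, so a new idea is needed. One alternative is to bypass equidistribution entirely and analyze a zeta-function-type generating functional $\prod_\phi (1 - T^{\deg \phi})^{-N_\phi(f)}$ via moment methods adapted to the coefficient distribution $\mu$; this is closer in spirit to the Arratia-Barbour-Tavar\'e approach, but extracting universality for non-uniform coefficients would require exploiting cancellations invisible in the uniform case. A second alternative is to work directly with the characteristic-polynomial perspective via a Galois-theoretic analysis of Frobenius acting on roots of $f$, which if tractable would reduce the conjecture to a statement about cycle structures of a random permutation that itself satisfies a universality property.
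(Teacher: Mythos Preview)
The statement you are attempting to prove is Conjecture~\ref{conj: max deg}, which the paper explicitly leaves open. The paper offers no proof; it only presents numerical evidence in Section~\ref{sec: numerics} (histograms comparing the normalized maximal degree against the Poisson--Dirichlet maximum). So there is no ``paper's own proof'' to compare against.

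Your proposal is not a proof either, and you are candid about this. It is a research outline with an honestly acknowledged gap at its heart: you need universality for $(N_i(f))_{i\le (1-\epsilon)n}$, but the Fourier-analytic machinery of the paper cannot reach $d=\Theta(n)$ because the $ax+b$ walk in $\F_{p^d}$ simply does not equidistribute in $n$ steps when $d$ is comparable to $n$. You correctly identify this as the central obstruction, and your suggested workarounds (coarser statistics than full equidistribution, tensorization across Frobenius orbits, a zeta-function moment method, or a Galois-theoretic reduction) are reasonable directions to explore, but none of them is carried out. In particular, the step ``suppose one can establish that the joint distribution of $(N_i(f))_{i \leq (1-\epsilon)n}$ is close in total variation to $(N_i(\overline{f}))_{i \leq (1-\epsilon)n}$'' is doing all of the work, and nothing in the proposal supplies it. The paper itself remarks that $n^{1/2}$ appears to be the fundamental limit of the technique, so pushing to $(1-\epsilon)n$ would require a genuinely new idea rather than a refinement.

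A smaller issue: even granting joint universality up to degree $(1-\epsilon)n$, your tightness claim that factors of degree above $(1-\epsilon)n$ contribute at most $\epsilon n$ with high probability is not quite what is needed. There is at most one such factor, and whether it exists is precisely what determines whether the normalized maximum lies in $[1-\epsilon,1]$; this event has probability bounded away from zero under the uniform model, so you cannot simply discard it. You would instead need to argue that the \emph{presence or absence} of such a factor is itself universal, which again requires control at degrees comparable to $n$.
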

We give some evidence in Section \ref{sec: numerics}. This would immediately follow from the following stronger conjecture, which we do not have any additional evidence for.
\begin{conjecture}
Let $f=\sum\varepsilon_i x^i$ be a random polynomial with independent coefficients in $\F_p$. Then the normalized degrees of the irreducible factors converge to a Poisson-Dirichlet process.
\end{conjecture}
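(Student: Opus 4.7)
The plan is to prove the conjecture by combining a strong universality statement for factor counts across all scales with the size-biased/GEM description of the Poisson--Dirichlet process. The strategy has three main steps.

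First, I would extend the universality results of this paper (Corollaries \ref{cor: no mult} and \ref{cor: with mult}) from factors of degree at most $n^{1/8}$ to factors of degree $n^{1-\delta}$ for any $\delta>0$. For uniformly random polynomials, the Arratia--Barbour--Tavar\'e framework \cite{ABT93} controls factor counts at all scales simultaneously via a Feller-type coupling. To upgrade to the non-uniform setting, one would need much stronger equidistribution results for $f(\alpha)$ where $\alpha\in \F_{p^d}$ for $d$ close to $n$, as well as joint equidistribution at many such points. This requires sharpening the Breuillard--Varj\'u Fourier-analytic tools \cite{BV19poly, BV19walk} to handle the $ax+b$ walk in very large extension fields, which is genuinely difficult because the state space grows exponentially in $d$ and the character sum estimates rapidly deteriorate.

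Second, I would reduce Poisson--Dirichlet convergence to a one-dimensional statement via size-biased sampling. Sample a root $\alpha$ of $f$ in $\overline{\F_p}$ uniformly with multiplicity, let $D_1$ be the degree of its minimal polynomial over $\F_p$, remove this factor from $f$, and iterate to obtain a sequence $D_1,D_2,\ldots$. For the uniform model, the normalized residuals $D_i/(n-D_1-\cdots-D_{i-1})$ converge jointly to i.i.d.\ $\mathrm{Uniform}[0,1]$ random variables, which is precisely the GEM representation of the PD process. Transferring this description to the non-uniform model amounts to showing that after conditioning on (and removing) a factor of prescribed degree, the quotient polynomial still satisfies a suitable universality statement, so that the procedure may be iterated without loss of randomness.

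Third, I would verify the one-dimensional statement, that $D_1/n$ converges to $\mathrm{Uniform}[0,1]$. Combining Step 1 with the heuristic $\E[dN_d(\overline{f})]\approx 1$ for each $d$ yields universality of $D_1/n$ on $[0,1-\delta]$. The main obstacle is the tail $D_1/n\in(1-\delta,1]$: no existing technique in the non-uniform setting accesses factors of nearly full degree, and the special case of $f$ being irreducible is entirely out of reach. Controlling this regime appears to require genuinely new ideas---perhaps a moment-comparison argument for the full factorization generating function, a direct comparison with characteristic polynomials of random matrices over $\F_p$ leveraging \cite{LMN21, eberhard2021, FJSS21}, or an interpolation between the uniform and non-uniform models that preserves the large-scale PD structure. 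This is the fundamental barrier that makes the conjecture hard, consistent with the authors' remark that they have no additional evidence for it.
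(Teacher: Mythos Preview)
The statement you are addressing is presented in the paper as an open \emph{conjecture}, not a theorem; the paper offers no proof and explicitly remarks that the authors ``do not have any additional evidence for'' it beyond the weaker Conjecture~\ref{conj: max deg} and the numerical simulations in Section~\ref{sec: numerics}. There is therefore nothing in the paper to compare your proposal against.

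Your write-up is accordingly not a proof but a research outline, and you yourself flag the decisive gap: Step~1 asks for universality of factor counts up to degree $n^{1-\delta}$, while the paper's Fourier-analytic machinery (Theorem~\ref{thm: konyagin fourier bound} and Proposition~\ref{prop: prob discrep bound}) is fundamentally limited to degrees $o(n^{1/2})$, since one needs $d\log p$ (with $d$ at least the degree of the root) to be small compared to $n$ for the Konyagin-type bound to be nontrivial. Extending equidistribution of $f(\alpha)$ to $\alpha\in\F_{p^d}$ with $d$ a positive fraction of $n$ would require the $ax+b$ walk to mix in a state space of size $p^d$ using only $n$ steps with $n\asymp d$, which is information-theoretically impossible in general. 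Likewise, Step~2's iteration requires that the quotient $f/g$ after removing a factor $g$ retains enough independence structure for the Fourier argument to apply, but the coefficients of $f/g$ are highly dependent functions of the original $\varepsilon_i$, so the paper's framework does not transfer. Your identification of the near-full-degree regime in Step~3 as the essential obstruction is accurate and matches the paper's own assessment; as it stands, the proposal is a reasonable sketch of what a proof \emph{would} have to accomplish, but none of its steps can currently be carried out.
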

In addition, one could ask similar questions about the total number of factors, the medium-degree factors, and so on. We leave all of these questions as open problems.

\subsection{Proof idea}
We now give a heuristic explanation for the bounds we obtain, and an idea of their proof.

The starting point is that the values of the random polynomial $f(x)=\sum_{i=0}^n \varepsilon_i x^i$ at an element $\alpha\in \F_{p^e}$ has the same distribution as the states at time $n$ of the Markov chains defined by $X_t=\alpha X_{t-1}+\varepsilon_t$. It is not hard to show that these Markov chains converge to the uniform distribution, and in fact recent work of Breuillard and Varj\'u \cite{BV19poly,BV19walk} show that if $q=p$ is prime, for most $\alpha$ and $p$, this Markov chain converges quickly. Note that their work cannot be applied to all $\alpha$, and so a major difficulty in our work is to show that there are not so many exceptional $\alpha$, and that they can be dealt with separately. Once the Markov chain equidistributes, we can immediately conclude that $\alpha$ is a root of $f(x)$ with probability $q^{-1}$, matching the probability for the uniform model $\overline{f}(x)$. 

In fact, while we ultimately avoid studying these random walks, we use the same techniques, extending them to handle the joint distribution of $f(\alpha_i)$ at multiple roots $\alpha_i$, along with the derivatives of $f$. We ultimately obtain two bounds on the Fourier coefficients for the distribution of the $f(\alpha_i)$, given by Proposition \ref{prop: weak fourier bound} and Theorem \ref{thm: konyagin fourier bound}.

The first bound we obtain, Proposition \ref{prop: weak fourier bound}, morally comes from the fact that these Markov chains converge to stationarity in at most order $q^2$ steps. This already allows us to handle the case of constant $q$, and is effective for small $q$. 

The other bound, Theorem \ref{thm: konyagin fourier bound} uses a bound originally due to Konyagin, see \cite{BV19poly,BV19walk}, and so morally comes from the fact that if $q=p$ is prime, for most $\alpha$, these Markov chains converge to stationarity in order $\log^2 p\log^5\log p$ steps. We cannot use the more precise techniques that give the sharper bounds obtained in \cite{BV19walk,BV19poly,EV21}, as these do not extend as readily to handle multiple roots or derivatives. In particular, when considering derivatives, the measures one considers lack the self-similarity property that was crucial in the analysis done in \cite{BV19walk}, but Konyagin's argument still works.

This lets us handle roots of high multiplicative order. We then handle the remaining roots by showing that they do not appear with high probability, and this requires that $p$ is large enough. This method is effective up to $p\leq e^{n^c}$ for reasonable $c>0$, but gives a polynomial rather than exponential error bound.

The second bound, Theorem \ref{thm: konyagin fourier bound}, is more involved, and relies on an argument originally due to Konyagin \cite{K92} which shows that the $\alpha$ for which the Markov chain above mixes slowly must have low multiplicative order. We extend this argument to handle multiple the values of $f$ and its derivatives at multiple roots, giving us information on their joint distribution. This allows us to effectively approximate the joint moments of the $N_i(f)$ and $N_i'(f)$ by the same moments for the uniform model.

Once we have these moment estimates, we then use these to obtain a bound on the total variation distance via the following proposition, which we could not find in the literature (although similar ideas have appeared, see e.g. \cite{J97}) and may be of independent interest.

\begin{proposition}
\label{prop: total variation bound}
Let $Z=(Z_1,\dotsc, Z_N)$ and $Z'=(Z_1',\dotsc, Z_N')$ be two integer-valued random vectors. Fix $H\in\N$ and let
\begin{equation}
\label{eq: moment error assumption}
    \varepsilon=\sup_{\sum k_i\leq H}\left|\E\left(\prod_{i=1}^{N} Z_i^{k_i}\right)-\E\left(\prod_{i=1}^{N} Z_i'^{k_i}\right)\right|.
\end{equation}
Suppose that
\begin{equation}
\label{eq: moment tail assumption}
    \E\left(\sum_{i=1}^{N} |Z_i|\right)^H,\E\left(\sum_{i=1}^{N} |Z'_i|\right)^H\leq C.
\end{equation}
Then for all $a\in\Z^N$,
\begin{equation*}
    |\P[Z=a]-\P[Z'=a]|\leq N^{H-1}e^{\pi}\varepsilon+2\frac{C\pi^H}{H!}.
\end{equation*}
\end{proposition}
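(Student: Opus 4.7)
The plan is to use Fourier inversion on the $N$-torus. Since $Z$ and $Z'$ take values in $\Z^N$, their characteristic functions $\phi(\theta):=\E[e^{i\langle\theta,Z\rangle}]$ and $\phi'(\theta):=\E[e^{i\langle\theta,Z'\rangle}]$ are $2\pi$-periodic in each coordinate, and Fourier inversion on $[-\pi,\pi]^N$ gives
\begin{equation*}
\P[Z=a]-\P[Z'=a]=\frac{1}{(2\pi)^N}\int_{[-\pi,\pi]^N}(\phi(\theta)-\phi'(\theta))e^{-i\langle\theta,a\rangle}\,d\theta.
\end{equation*}
After taking absolute values and using $|e^{-i\langle\theta,a\rangle}|=1$, the task reduces to proving the pointwise estimate $|\phi(\theta)-\phi'(\theta)|\le N^{H-1}e^\pi\varepsilon+2C\pi^H/H!$ uniformly on $[-\pi,\pi]^N$, since averaging over $\theta$ then preserves the bound.

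To obtain such a bound, I would Taylor expand $e^{i\langle\theta,Z\rangle}$ in $\theta$ to total degree $H-1$. Using the multinomial identity $\langle\theta,Z\rangle^k/k!=\sum_{|\mathbf{k}|=k}\theta^{\mathbf{k}}Z^{\mathbf{k}}/\mathbf{k}!$, the Taylor polynomial is $\sum_{|\mathbf{k}|\le H-1}i^{|\mathbf{k}|}\theta^{\mathbf{k}}Z^{\mathbf{k}}/\mathbf{k}!$, and the integral form of Taylor's remainder applied to $e^{ix}$ at $x=\langle\theta,Z\rangle$ gives
\begin{equation*}
\left|e^{i\langle\theta,Z\rangle}-\sum_{|\mathbf{k}|\le H-1}\frac{i^{|\mathbf{k}|}\theta^{\mathbf{k}}Z^{\mathbf{k}}}{\mathbf{k}!}\right|\le \frac{|\langle\theta,Z\rangle|^H}{H!}\le \frac{\pi^H(\sum_i|Z_i|)^H}{H!}.
\end{equation*}
Taking expectations and doing the same for $Z'$, hypothesis (\ref{eq: moment error assumption}) bounds the difference of Taylor polynomials (in expectation) by $\varepsilon\sum_{|\mathbf{k}|\le H-1}\pi^{|\mathbf{k}|}/\mathbf{k}!$, while (\ref{eq: moment tail assumption}) bounds the expected absolute value of each Taylor remainder by $\pi^HC/H!$, for a total contribution of $2C\pi^H/H!$.

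The only substantive estimate, and the source of the polynomial-in-$N$ behavior in the final answer, is
\begin{equation*}
\sum_{|\mathbf{k}|\le H-1}\frac{\pi^{|\mathbf{k}|}}{\mathbf{k}!}=\sum_{k=0}^{H-1}\frac{(N\pi)^k}{k!}\le N^{H-1}\sum_{k=0}^{H-1}\frac{\pi^k}{k!}\le N^{H-1}e^{\pi},
\end{equation*}
where the first identity uses $\sum_{|\mathbf{k}|=k}1/\mathbf{k}!=N^k/k!$ and the inequality uses $N^k\le N^{H-1}$ for $0\le k\le H-1$ when $N\ge 1$. The naive bound $\sum(N\pi)^k/k!\le e^{N\pi}$ would give an exponentially worse dependence on $N$; truncating at degree $H-1$ is precisely what lets us trade an $N^k$ factor for $N^{H-1}$. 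A slightly delicate point in matching the stated constants is that one should truncate at degree $H-1$ and use the sharper Taylor remainder $|e^{ix}-T_{H-1}(x)|\le |x|^H/H!$, rather than truncating at degree $H$ and using the weaker $|e^{ix}-T_H(x)|\le 2|x|^H/H!$, which would produce a leading constant of $4$ instead of $2$ on the tail term. Combining everything yields the desired pointwise bound on $|\phi-\phi'|$, from which the proposition follows after integrating.
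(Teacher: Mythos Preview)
Your proof is correct and follows essentially the same approach as the paper's: Fourier inversion on $[-\pi,\pi]^N$, Taylor expansion of $e^{i\langle\theta,Z\rangle}$ to order $H-1$, moment matching on the polynomial part, and the Lagrange remainder bounded by the $H$th absolute moment of $\sum_i|Z_i|$. The paper's version is considerably more terse (it simply asserts the $N^{H-1}e^\pi\varepsilon$ bound ``by expanding the powers''), whereas you have spelled out the multinomial identity $\sum_{|\mathbf{k}|=k}1/\mathbf{k}!=N^k/k!$ and the crucial truncation $N^k\le N^{H-1}$ that produces the polynomial-in-$N$ dependence; this is exactly the computation implicit in the paper.
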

\begin{remark}
Note that Proposition \ref{prop: total variation bound} must be summed over the support of $Z$ and $Z'$ to obtain a total variation bound, so in practice $\varepsilon$ must be very small or one needs good tail bounds for this bound to be useful. The easiest case is when $\varepsilon=0$ and the moments match exactly. In our application, $\varepsilon$ will be exponentially small, and so we must carefully pick our parameters to ensure that the bound is effective.
This result can also be sharpened in the case when the errors are not uniformly small.
\end{remark}

\begin{remark}
We do not explicitly make the connection, but our results imply mixing time bounds of order $\log^2 p \log^5\log p$ for certain higher-dimensional analogues of the $ax+b$ process. These are weaker than the expected order $\log p\log\log p$ mixing time, but this improvement would not give very much in our setting, improving the dependence on $p$ in the upper bounds. Recent work of Dubail and Massouli\'e \cite{DM21} establishes this bound for a large class of higher-dimensional analogues, but unfortunately their results do not apply to our setting, and their bounds do not give a uniform control which is necessary for our applications. Nevertheless, it would be interesting to see if these ideas, or the more refined ideas giving the $\log p\log\log p$ mixing time bound for the $1$-dimensional $ax+b$ chain, could be used in our setting.
\end{remark}

\subsection{Related work}
Our work seems morally related to results on universality for random polynomials over $\R$ or $\C$. Here, the comparison is with a random polynomial with Gaussian coefficients. We do not attempt to review all the literature, and refer the reader to the cited papers for further background and references. The limiting density for roots \cite{IZ97} and local correlations \cite{TV15,DNV18} are known to exhibit universal behaviour. More recently, it has been established that the moduli of the roots converge to a Poisson point process \cite{cook2021universality2}, again matching the behaviour of the Gaussian model \cite{MS20}.  It is interesting to note that \cite{CN21, cook2021universality2} use similar techniques, studying random walks related to their random polynomial and their derivatives, as well as using Fourier-theoretic arguments.

There has also been some recent progress on the roots of random $p$-adic polynomials. Recent work of Shmueli \cite{S21} found the expected number of roots for a random polynomial over $\Q_p$ whose coefficients are randomly drawn from $\Z_p$, but are not necessarily Haar-distributed. There are some striking similarities with the finite field case that we study, and it would be interesting to see if some of our results and methods could be used to approach this problem as well.

Random matrices over finite fields have also been quite intensely studied recently, and many properties of uniform random matrices over finite fields are now known to be universal. We will only survey some recent results, and refer the reader to \cite{LMN21} for more references.

Recent work of Luh, Meehan and Nguyen \cite{LMN21} show that the rank distribution and the small factors of the characteristic polynomial of a random matrix over $\F_p$ are universal, at least when $p$ is fixed. The analogous results \cite{S88, NP98} for the uniform model were known much earlier. Eberhard \cite{eberhard2021} strengthened the error bounds and extended some of these results to $\F_q$, and Ferber, Jain, Sah and Sawhney \cite{FJSS21} showed similar results for the rank distribution of symmetric matrices. These latter two works were motivated by the study of random $\pm 1$ matrices over $\Z$.

Finally, the distribution of factors for uniformly random monic polynomials is exactly equal to the distribution of cycles in a deck of cards after a $q$-shuffle (a generalization of a riffle shuffle) \cite{DMP95}, and this has been extended to other Coxeter groups \cite{F00,F01}. It's unclear how to interpret non-uniform random polynomials via card shuffling, but it would be interesting if a connection could be made. It would also be interesting to see if $a$-shuffles had some universal behavior.

\subsection{Outline}
The rest of the paper is structured as follows. In Section \ref{sec: prelim}, we set some notation and recall basic facts about Mahler measure and the Hasse derivative. In Section \ref{sec: low order}, we establish similar bounds when $q$ is small, and also establish a Hal\'asz-type bound for the probability that a given polynomial divides $f$. In Section \ref{sec: konyagin}, we establish bounds on Fourier coefficients effective when $q=p$ is prime and large, adapting an argument originally due to Konyagin \cite{K92}. In Section \ref{sec: tv bound}, we prove Proposition \ref{prop: total variation bound} and give moment bounds for the uniform model. In Section \ref{sec: moments}, we show that the joint moments of the $N_i$ and $N_i'$ for $f$ and $\overline{f}$ are close. In Section \ref{sec: main results}, we state and prove our strongest results, and prove the corollaries stated in the introduction. Finally, in Section \ref{sec: numerics}, we present some numerical simulations which support our results and suggest some interesting directions for further study.

\section{Preliminaries}
\label{sec: prelim}
In this section, we set notation and review some basic facts about Mahler measure and Hasse derivatives. The reader may safely skip this section and refer back when needed.

\subsection{Notation}
Throughout, $\Tr$ will denote the field trace of a finite extension $\F_{q^e}/\F_q$, with the extension clear from context. We define $e_p(x)=\exp\left(\frac{2\pi i x}{p}\right)$. We will let $C$ and $c$ denote a large and small positive constant respectively, that may change from line to line.
We use $f\ll g$ or $f =O(g)$ to denote that there exists a positive
constant  $C$ such that $f \le C g$.

\subsection{Assumptions on parameters}
Throughout this paper, we will let $f$ denote a random polynomial with coefficients drawn independently from a distribution $\mu$. We define parameters $n$, $N$, $H$, $K$. We will let $n$ denote the degree of $f$, $N$ the degree of the largest degree roots we wish to study, $H$ the largest number of distinct roots we wish to study, and $K$ the highest order Hasse derivative we wish to study. We will eventually assume that
\begin{equation}
\label{eq: assumptions}
    N=n^c,\qquad H=N\log n,\qquad K=N\log^2 n,\qquad p\leq e^{n^c}
\end{equation}
with $c>0$ some explicit constants that will depend on whether we count irreducible factors with or without multiplicity. We will only need to take $K>0$ when we study irreducible factors with multiplicity.

\subsection{Mahler measure}
The \emph{Mahler measure} of a polynomial $f(x)\in \C[x]$ with $f(x)=c_d\prod_{i=1}^{d} (x-\alpha_i)$, denoted $M(f)$, is defined by
\begin{equation*}
    M(f)=|c_d|\prod_{i=1}^{d} \max(1,|\alpha_i|)=\exp\left(\frac{1}{2\pi}\int_0^{2\pi}\log|f(e^{i\theta})|d\theta\right).
\end{equation*}
It was previously used by Breuillard and Varj\'u in their study of certain random walks over finite fields \cite{BV19poly,BV19walk} and we borrow their ideas heavily. See Section 1.6 of \cite{BG06} for some further background.

Mahler measure is multiplicative, $M(fg)=M(f)M(g)$, and if $f(x)=\sum_{i=0}^{d} c_i x^i$, there is also an upper bound
\begin{equation*}
    M(f)\leq \sqrt{\sum_{0\le i\le d} c_i^2}.
\end{equation*}
The key property of Mahler measure we exploit is that it provides a way to detect if $f$ is a cyclotomic polynomial (or $f(x)=x$) when $f$ is a monic irreducible integer polynomial. Specifically, if $f\in \Z[x]$ is monic and irreducible of degree $d$, then by Dobrowolski's bound \cite{D79}, either
\begin{equation*}
    M(f)\geq 1+c\left(\frac{\log \log d}{\log d}\right)^3,
\end{equation*}
or $f(x)$ is either a cyclotomic polynomial, or $f(x)=x$. If $f$ is a cyclotomic polynomial, then we can obtain useful bounds on its multiplicative order.

\subsection{The Hasse derivative}
We would like to study the multiplicity of roots of a polynomial $f$ through studying the roots of $f$ and its derivatives. However, since the $p$th derivative of any polynomial with coefficients in $\F_p$ is $0$, we use the following alternative. See Section 5.10 of \cite{HKT08} for further details.
\begin{definition}[Hasse derivative]
Let $f(x)=\sum c_i x^i\in R[x]$ be a polynomial over a ring $R$ (for us, $R=\Z$ or $R=\F_q$). We define the $k$th \emph{Hasse derivative} of $f$, denoted $D^{(k)}f(x)$, by
\begin{equation*}
    D^{(k)}f(x)=\sum c_i{i\choose k}x^{i-k},
\end{equation*}
with the understanding that if $k>i$ then ${i\choose k}=0$.
\end{definition}

We will only need the following basic but useful properties.

\begin{itemize}
    \item $D^{(k)}(fg)(x)=\sum_{i+j=k} D^{(i)}f(x)D^{(j)}g(x)$.
    \item  $f(x)=\sum_{k=0}^{\deg(f)} D^{(k)}f(\alpha)(x-\alpha)^k$.
    \item $f(x)$ has a root $\alpha$ of multiplicity $r$ if and only if $D^{(k)}f(\alpha)=0$ for all $k\leq r$.
\end{itemize}

If $\alpha$ generates an extension $\F_{p^e}$ of $\F_p$, then $1,\dotsc, \alpha^{e-1}$ forms a basis. The following lemma extends this to multiple roots $\alpha_j$, along with derivatives of the monomials.

\begin{lemma}
\label{lem: lin ind}
Let $\alpha_j\in \F_{q^{e_j}}$ for $j=1,\dotsc, n$, so that each $\alpha_j$ does not lie in any smaller subfield, and none of the $\alpha_j$ are Galois conjugates. For each $j$, let $k_j\in \N$. Let $d=\sum_{1\le j \le n} e_j(k_j+1)$. Let $m\geq 0$.

The vectors $(\alpha_j^{i-k}{i\choose k})\in\prod _j \prod _{k\leq k_j}\F_{q^{e_j}}$ for $i=m,\dotsc, m+d-1$ form a basis.
\end{lemma}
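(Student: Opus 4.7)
The plan is to recognize the statement as the injectivity of a natural $\F_q$-linear map, and then translate that injectivity into a polynomial divisibility fact that is forced by the hypotheses.

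First, I would observe that the target $\prod_j \prod_{k \le k_j} \F_{q^{e_j}}$ is an $\F_q$-vector space of dimension exactly $\sum_j (k_j+1) e_j = d$, matching the number of vectors $v_i = (\alpha_j^{i-k}\binom{i}{k})_{j, k\le k_j}$ to be shown linearly independent. Consequently, it suffices to establish injectivity of the $\F_q$-linear map
\[
\phi\colon \F_q^d \longrightarrow \prod_{j}\prod_{k \le k_j} \F_{q^{e_j}}, \qquad \phi(c) = \bigl(D^{(k)}f(\alpha_j)\bigr)_{j,\,k\le k_j},
\]
where $f(x) = \sum_{i=m}^{m+d-1} c_i x^i$ for $c = (c_i)$. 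Indeed, $v_i = \phi(e_i)$ for the standard basis vector $e_i$, so linear independence of the $v_i$ is exactly injectivity of $\phi$.

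Next, I would reinterpret the vanishing $\phi(c) = 0$ as a divisibility statement in $\F_q[x]$. By the third property of the Hasse derivative recalled in the preliminaries, the condition $D^{(k)}f(\alpha_j) = 0$ for all $k \le k_j$ is equivalent to $\alpha_j$ being a root of $f$ of multiplicity at least $k_j + 1$. Let $m_j(x) \in \F_q[x]$ be the minimal polynomial of $\alpha_j$; because $\alpha_j$ does not lie in any smaller subfield, $m_j$ is irreducible of degree exactly $e_j$. Since $f \in \F_q[x]$, all Galois conjugates of $\alpha_j$ over $\F_q$ are also roots of $f$ of the same multiplicity, so $m_j(x)^{k_j+1} \mid f(x)$ in $\F_q[x]$. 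The non-conjugacy assumption makes the $m_j$ pairwise distinct irreducibles, hence pairwise coprime, and therefore $M(x) := \prod_j m_j(x)^{k_j+1}$ divides $f(x)$. This $M$ has degree exactly $\sum_j (k_j+1)e_j = d$.

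Finally, I would close by comparing degrees. Write $f(x) = x^m g(x)$ with $\deg g \le d - 1$. In the intended setting the $\alpha_j$ are nonzero (the factor $x$ is excluded from the statistics studied in the paper), so each $m_j$ is coprime to $x$, hence $M$ is coprime to $x^m$. Then $M \mid f = x^m g$ forces $M \mid g$, which contradicts $\deg g \le d - 1 < d = \deg M$ unless $g = 0$, i.e., $c = 0$. This gives injectivity of $\phi$ and hence the lemma. The only real subtlety I anticipate is the edge case $\alpha_j = 0$ (possible when $e_j = 1$), where $m_j = x$ and the coprimality step fails; this case does not arise in the paper's applications, and the remaining parts of the argument go through verbatim once one assumes $\alpha_j \neq 0$.
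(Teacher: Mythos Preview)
Your proposal is correct and follows essentially the same route as the paper: both arguments set up the polynomial $f(x)=\sum_{i=m}^{m+d-1}c_ix^i$, use the Hasse-derivative criterion to force $m_j(x)^{k_j+1}\mid f$, and then compare degrees after peeling off the factor $x^m$. Your write-up is in fact slightly more careful than the paper's, which glosses over the coprimality of $\prod_j m_j^{k_j+1}$ with $x^m$; your remark that the edge case $\alpha_j=0$ must be excluded (and is excluded in all applications in the paper) is a valid observation that the paper's statement tacitly assumes.
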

\begin{proof}
Suppose that $\sum_{i=m}^{m+d-1} c_i \alpha_j^{i-k}{i\choose k}=0$ for all $k\leq k_j$ and $j\leq n$. Then we have that the polynomial $f(x)=\sum_{i=m}^{m+d-1} c_i x^i$ satisfies $D^{(k)}f(\alpha_j)=0$ for all $k\leq k_j$ and $j\le n$. This implies in particular that each $\alpha_j$ is a root of $f$ of multiplicity at least $k_j+1$. As the $\alpha_j$ are not Galois conjugates, we show this forces $f(x)=0$. Since any non-zero polynomial having all the $\alpha_j$ roots of multiplicity $k_j+1$ respectively must be divisible by the minimal polynomials of the $\alpha_j$ at least $k_j+1$ times, such a polynomial must have a factor of degree at least $\sum e_j(k_j+1)=d$ corresponding to these roots. Note that $f$ has degree at most $m+d-1$, and is divisible by $x^m$, which implies that $f(x)=0$, and so $c_i=0$ for all $i$. 
\end{proof}

\section{Uniform estimates}
\label{sec: low order}
\subsection{General setup}
\label{sec: setup}
Throughout the next few sections, we fix the following notation and general setup. Fix some prime $p$, $q=p^e$ for some positive integer $e$. Let $H\in\N$ and $\alpha_i\in \F_{q^{e_i}}$ for $i\leq H$, with the $\alpha_i$ not lying in a smaller subfield, and none a Galois conjugate of another. These will be the roots we will consider. Let $\mathcal{K}_i\subseteq \N$ for $i\leq H$, which will be the set of derivatives we consider for $\alpha_i$, let $K_i=\max \mathcal{K}_i$, and let $k^*=\max_i K_i+1$ denote the largest derivative considered. Let $d=\sum_{1\le i \le H} e_i(K_i+1)$. Let $\beta_{i,k}\in\F_{q^{e_i}}$ for $k\in \mathcal{K}_i$ and $i\leq H$, with all $\beta_{i,k}$ non-zero. Let $V = \prod _i \left(\F_{q^{e_i}}\right)^{\mathcal{K}_i}$. 

Let $X_i$ be a sequence of independent and identically distributed $\F_q$-valued random variables, and let $\mu$ denote its distribution. Let $\eta = 1 - \max_{V\subseteq \F_q} \mu(V)$, where $V$ is taken over all affine $\F_p$ subspaces. We wish to study the distribution of $\sum_{i=0}^n X_i D^{(k)}(\alpha_j^i)$ for $j\leq H$ and $k\in \mathcal{K}_j$ as a random variable in $V$, and we let $\nu_n$ denote its distribution.

In this section, we obtain some uniform estimates that hold for any roots $\alpha$. While these are not strong enough when $p$ is large, they are necessary to handle certain low-order roots that cannot be handled in any other way.

\subsection{Fourier bounds}
We first obtain uniform bounds on the Fourier coefficients of $\nu_n$.
\begin{lemma}
\label{lem: n>p^2 bound}
Let $d_1,d_2\in\N$, and let $T:\F_q^{d_1}\to\F_q^{d_2}$ be a surjective linear map. Let $\mu$ be a probability measure on $\F_q$ whose support generates $\F_q$. Suppose that for each $\beta\in\F_q$, the fraction of $i\in [d_1]$ such that $\beta\cdot Tv_i=0$ is at most $1-\gamma$ if $\beta\neq 0$, where $v_i$ is the standard basis for $\F_q^{d_1}$. Define $\eta$ to be the minimum probability of $\mu$ in the complement of a proper $\F_p$ affine subspace of $\F_q$. Then for $\beta\neq 0$,
\begin{equation*}
    \widehat{T_*\mu^{\otimes d_1}}(\beta)\leq (1-\eta/p^2)^{\gamma d_1},
\end{equation*}
where $T_*\mu^{\otimes d_1}$ denotes the pushforward under $T$ of the $d_1$-fold product measure of $\mu$ on $\F_q^{d_1}$.
\end{lemma}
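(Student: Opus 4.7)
The plan is to factor the Fourier coefficient as a product over coordinates and then bound each factor separately using the affine-subspace hypothesis on $\mu$.

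First I would expand explicitly. Identifying characters of $\F_q^{d_2}$ in the form $w\mapsto e_p(\Tr(\beta\cdot w))$ for $\beta\in\F_q^{d_2}$, and writing $T_*\mu^{\otimes d_1}$ as the law of $\sum_{i=1}^{d_1} X_i T v_i$ with $X_i$ i.i.d.\ $\mu$, a direct computation gives
\begin{equation*}
\widehat{T_*\mu^{\otimes d_1}}(\beta) = \E\Bigl[e_p\Bigl(\Tr \sum_{i} X_i (\beta\cdot Tv_i)\Bigr)\Bigr] = \prod_{i=1}^{d_1} \widehat{\mu}(\beta\cdot Tv_i),
\end{equation*}
where $\widehat{\mu}(\xi):=\E_{X\sim\mu}[e_p(\Tr(\xi X))]$. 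The hypothesis on $T$ says that whenever $\beta\neq 0$, the coefficients $\beta_i:=\beta\cdot Tv_i \in \F_q$ are nonzero for at least $\gamma d_1$ indices $i$. Bounding the remaining factors trivially by $1$ reduces the claim to the one-variable estimate $|\widehat{\mu}(\xi)| \le 1 - \eta/p^2$ for every $\xi\in\F_q\setminus\{0\}$.

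For this one-variable estimate, the key observation is that for $\xi\neq 0$ the trace-pairing $x\mapsto \Tr(\xi x)$ is a surjective $\F_p$-linear form on $\F_q$, so its level sets $A_j:=\{x\in\F_q : \Tr(\xi x) = j\}$ for $j\in\F_p$ partition $\F_q$ into $p$ proper $\F_p$-affine subspaces. Setting $p_j:=\mu(A_j)$, the defining property of $\eta$ gives $p_j \le 1-\eta$ for each $j$, and hence $\sum_j p_j^2 \le (\max_j p_j)\sum_j p_j \le 1-\eta$. A standard Fourier computation on $\Z/p\Z$ now yields
\begin{equation*}
1 - |\widehat{\mu}(\xi)|^2 \;=\; 2\sum_{j,j'} p_j p_{j'} \sin^2\!\Bigl(\tfrac{\pi(j-j')}{p}\Bigr) \;\ge\; \frac{8}{p^2}\Bigl(1 - \sum_j p_j^2\Bigr) \;\ge\; \frac{8\eta}{p^2},
\end{equation*}
using $\sin^2(\pi k/p) \ge 4/p^2$ for all $k\in\{1,\dotsc,p-1\}$. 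Taking square roots yields $|\widehat{\mu}(\xi)| \le 1 - \eta/p^2$ with room to spare, and multiplying the at least $\gamma d_1$ nontrivial factors together gives the claimed bound.

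The computation is largely forced, so the one conceptual step worth flagging is precisely the interface between the hypothesis and the Fourier bound: the definition of $\eta$ in terms of proper \emph{$\F_p$-affine} subspaces (rather than $\F_q$-subspaces or pointwise mass) is exactly what is needed, because the level sets of the $\F_p$-linear functional $x\mapsto \Tr(\xi x)$ are $\F_p$-affine hyperplanes in $\F_q$. Once one sees this, there is no real obstacle in the proof.
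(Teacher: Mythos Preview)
Your proposal is correct and follows essentially the same approach as the paper: factor the Fourier coefficient as $\prod_i \widehat{\mu}(\beta\cdot Tv_i)$ and then bound each nontrivial factor by $1-\eta/p^2$ using that the level sets of $x\mapsto \Tr(\xi x)$ are proper $\F_p$-affine hyperplanes. The only difference is in the one-variable bound, where the paper asserts directly that $|\widehat{\mu}(\xi)|\le |1-\eta+\eta e^{2\pi i/p}|\le 1-\eta/p^2$, while you give a cleaner second-moment argument via $1-|\widehat{\mu}(\xi)|^2\ge \tfrac{8}{p^2}(1-\sum_j p_j^2)\ge 8\eta/p^2$; both reach the same conclusion.
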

\begin{proof}
We have
\begin{equation*}
\begin{split}
    \widehat{T_*\mu^{\otimes d_1}}(\beta)&=\sum_{x\in \F_p^{d_1}}\left(\prod_{i=1}^{d_1} \mu(x_i)\right)e_p(\Tr(\beta\cdot T(x)))
    \\&=\sum_{x\in \F_p^{d_1}}\prod_{i=1}^{d_1} \mu(x_i)e_p(\Tr(x_i\beta\cdot T(v_i)))
    \\&=\prod _{i=1}^{d_1} \widehat{\mu}(\beta\cdot Tv_i).
\end{split}
\end{equation*}
By the definition of $\eta$, $|\widehat{\mu}(\beta\cdot Tv_i)|\leq |1-\eta+\eta e^{2\pi i/p}| \le 1-\eta/p^2$ if $\beta \cdot Tv_i\neq 0$. By assumption, this happens at least $\gamma d_1$ times.
\end{proof}

Combining Lemma \ref{lem: lin ind} and Lemma \ref{lem: n>p^2 bound}, we obtain the following bounds.

\begin{proposition}
\label{prop: weak fourier bound}
With the notation of Section \ref{sec: setup}, we have
\begin{equation*}
    |\widehat{\nu_n}(\beta)|\leq e^{-\frac{\eta n}{dp^2}}
\end{equation*}
where $\eta = 1-\max_{V\subsetneq \F_q}\mu(V)$. 
\end{proposition}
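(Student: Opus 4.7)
The plan is to realize $\nu_n$ as the pushforward of the product measure $\mu^{\otimes(n+1)}$ under a single $\F_q$-linear map, and then feed the result into Lemma \ref{lem: n>p^2 bound}, using Lemma \ref{lem: lin ind} to verify its hypothesis. Define $T \colon \F_q^{n+1} \to V$ by
\[
T(x_0,\dots,x_n) = \Bigl(\sum_{i=0}^{n} x_i \binom{i}{k}\alpha_j^{i-k}\Bigr)_{j\le H,\, k\in \mathcal{K}_j},
\]
so that the image of the $i$th standard basis vector is $Tv_i = \bigl(\binom{i}{k}\alpha_j^{i-k}\bigr)_{j,k} \in V$, and by construction $T_*\mu^{\otimes(n+1)} = \nu_n$. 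The task is then to control, for each non-zero character $\beta$ of $V$, the fraction of indices $i$ with $\beta\cdot Tv_i \ne 0$.

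Next I would invoke Lemma \ref{lem: lin ind} applied to the $\alpha_j$ with $k_j = K_j$: for every $m\ge 0$, the $d$ vectors $Tv_m,\dots,Tv_{m+d-1}$ form a basis of the full product $\prod_j\prod_{k\le K_j}\F_{q^{e_j}}$. Projecting to $V$ (which keeps only the coordinates $k\in \mathcal{K}_j$) the images still span $V$. In particular $T$ is surjective, so Lemma \ref{lem: n>p^2 bound} is applicable.

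The key combinatorial step is the counting. Partition $\{0,1,\dots,n\}$ into consecutive windows of length $d$ (with a possible shorter leftover). Within each full window the $d$ vectors $Tv_i$ span $V$, so any non-zero $\beta$ must pair non-trivially with at least one of them. Consequently the number of good indices is at least $\lfloor (n+1)/d\rfloor$, which is at least $n/d$ once $n$ is at least $d$ (the case $n<d$ is vacuous since the exponential bound is $\le 1$). Setting $d_1 = n+1$ and $\gamma d_1 \ge n/d$ in Lemma \ref{lem: n>p^2 bound} gives
\[
|\widehat{\nu_n}(\beta)| \le (1-\eta/p^2)^{n/d} \le e^{-\eta n/(dp^2)},
\]
where the final step uses $1-x\le e^{-x}$.

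There is no real obstacle here; the only slightly delicate points are (i) checking that the trace-pairing of a non-zero $\beta\in V$ with a spanning set must be non-zero on at least one generator (immediate from spanning), and (ii) that the definition of $\eta$ in Lemma \ref{lem: n>p^2 bound} agrees with the one stated in the proposition, which it does since both are $1-\max_{V\subsetneq \F_q}\mu(V)$ over proper $\F_p$-affine subspaces.
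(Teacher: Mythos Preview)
Your argument is correct and matches the paper's: realize $\nu_n$ as $T_*\mu^{\otimes(n+1)}$, use Lemma~\ref{lem: lin ind} to see that any $d$ consecutive $Tv_i$ span (so each length-$d$ window contributes at least one index with $\beta\cdot Tv_i\ne 0$), and feed this into Lemma~\ref{lem: n>p^2 bound}. One small slip: $\lfloor(n+1)/d\rfloor\ge n/d$ fails for some $n\ge d$, and the case $n<d$ is not actually vacuous since $e^{-\eta n/(dp^2)}<1$; but the paper's own proof is equally informal on this point, and $d\le n$ is assumed in every application (cf.\ Proposition~\ref{prop: prob discrep bound}).
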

\begin{proof}
We let $T:\F_p^n\to \F_p^{d'}$ (where $d'=\sum e_i|\mathcal{K}_i|$) be the map taking $(x_0,\dotsc, x_n)$ to the vector $(D^{(k)}f(\alpha_i))_{i\leq H, k\in \mathcal{K}_i}$, where $f(\alpha)=\sum x_i \alpha^i$. By Lemma \ref{lem: lin ind}, this map is surjective, and moreover, $Tv_i$ for $i=j,j+1,\dotsc, j+d-1$ spans $\F_p^{d'}$. Thus for each non-zero $\beta$, there is at least one $i$ for which $\beta\cdot Tv_i\neq 0$, and so Lemma \ref{lem: n>p^2 bound} gives the desired bound.
\end{proof}

\subsection{Hal\'asz-type bounds}
Next, we establish the following Hal\'asz-type bound for the probability that a random polynomial $f(x)$ has some collection of roots with given multiplicities. While we only need the case of a single root without multiplicity, we believe the stronger result may be of some independent interest. We remark that there is an extensive literature on anti-concentration type bounds, and we refer readers to the survey \cite{NV}. In the setting of torsion abelian groups, Hal\'asz-type bounds and Littlewood-Offord inverse results are obtained in \cite{koenig2021note}. In our case, we take direct advantage of properties of the power sequence to obtain the desired bound in Proposition \ref{prop:halasz-bound}. We remark that one can also follow proof of typical Littlewood-Offord results \cite{NV} combined with inverse results of Freiman-type in general abelian groups \cite{GR} to obtain an upper bound in Proposition \ref{prop:halasz-bound} of the form $p^{-d} + O_{\eta}(n^{-\lambda_d})$ where $\lambda_d \to \infty$. For us, the bound provided in Proposition \ref{prop:halasz-bound} with an exponential decay in $d$ is  more convenient to use and leads to better quantitative bounds in our settings.

\begin{proposition}\label{prop:halasz-bound}
Let $f(x)=\sum_{1\le i \le n} \varepsilon_i x^i$ be a random polynomial of degree $n$ in $\F_p[x]$, with $\varepsilon_i$ independent and distributed according to $\mu$. Let $\alpha_1,\dots,\alpha_H$ be so that $\alpha_j$ in $\F_{p^{e_j}}$ and $\alpha_j$ does not lie in any proper subfield of $\F_{p^{e_j}}$. Let $d=\sum_{j\le H}e_j( K_j+1)$. Then
\begin{equation*}
    \P\left[(x-\alpha_i)^{K_i}|f(x) \text{ }\forall i\right]\leq \left(\frac{1}{p}+C\eta^{-\frac{1}{2}}\left\lfloor\frac{n}{d}\right\rfloor^{-\frac{1}{2}}\right)^d.
\end{equation*}
for some absolute constant $C$.
\end{proposition}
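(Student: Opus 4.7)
The event $\{(x-\alpha_i)^{K_i}\mid f(x)\ \forall\, i\}$ is equivalent, by the Hasse derivative criterion, to the vanishing of the $\F_p$-linear map $T\colon \F_p^n\to V$ sending $(\varepsilon_1,\ldots,\varepsilon_n)$ to the vector of relevant Hasse derivatives $(D^{(k)}f(\alpha_j))_{j,k}$, valued in a space $V$ of $\F_p$-dimension $d$. Fourier inversion on $V$ via the trace pairing with $e_p$, together with independence of the $\varepsilon_i$, yields
\[
\P[T\varepsilon=0] \le \frac{1}{p^d}\sum_{\beta\in V}\prod_{i=1}^n\left|\widehat\mu(\beta\cdot Tv_i)\right|,
\]
where $v_i$ are the standard basis vectors of $\F_p^n$.

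The next step exploits the structural input from Lemma \ref{lem: lin ind}: any $d$ consecutive $Tv_i$ form a basis of $V$. I would partition $\{1,\ldots,n\}$ into $L=\lfloor n/d\rfloor$ consecutive blocks $B_1,\ldots,B_L$ of size $d$, discarding any leftover indices whose Fourier factors are at most $1$. For each block $B_\ell$, the map $\beta\mapsto(\beta\cdot Tv_i)_{i\in B_\ell}$ is an $\F_p$-isomorphism $V\to\F_p^d$. Applying AM-GM in the form $\prod_{\ell=1}^{L} a_\ell\le\frac{1}{L}\sum_\ell a_\ell^L$ block by block, and then changing variables within each block, gives
\[
\sum_\beta\prod_{i=1}^n\left|\widehat\mu(\beta\cdot Tv_i)\right|\le\left(\sum_{z\in\F_p}|\widehat\mu(z)|^L\right)^d.
\]

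Everything now reduces to a one-dimensional Halász-type bound $S_L:=\sum_{z\in\F_p}|\widehat\mu(z)|^L\le 1+Cp(L\eta)^{-1/2}$. Setting $\sigma=\mu*\mu^-$, so that $\widehat\sigma=|\widehat\mu|^2$, Fourier inversion on $\F_p$ gives $S_{2M}=p\,\sigma^{*M}(0)$, the time-$M$ return probability of the symmetric random walk driven by $\sigma$. Since $\max\mu\le 1-\eta$, the measure $\sigma$ has an effective second moment of order at least $\eta$ (the tight case $\mu=(1-\eta)\delta_a+\eta\delta_{a+1}$ already gives $\Var(\sigma)=2\eta(1-\eta)$); a standard local limit theorem on $\F_p$ then delivers $\sigma^{*M}(0)\le C/\sqrt{M\eta}$ while the walk is short of mixing, and $\sigma^{*M}(0)\le C/p$ thereafter, yielding the desired bound on $S_L$ in either regime.

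Assembling everything,
\[
\P[T\varepsilon=0]\le\frac{1}{p^d}\bigl(1+Cp(L\eta)^{-1/2}\bigr)^d = \left(\frac{1}{p}+C\eta^{-1/2}\lfloor n/d\rfloor^{-1/2}\right)^d,
\]
which is the claim. The main obstacle is precisely the 1D Halász bound with sharp $\eta^{-1/2}$ dependence: the pointwise estimate $|\widehat\mu(z)|\le 1-c\eta/p^2$ from Lemma \ref{lem: n>p^2 bound} is too loose by a factor of $p$ and would only give the much weaker $(1/p+C\eta^{-1/2}pL^{-1/2})^d$. Recasting the sum via Fourier inversion as a random-walk return probability and controlling it through a local CLT is what extracts the correct scaling and makes the final bound decay exponentially in $d$.
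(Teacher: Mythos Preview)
Your argument is correct and follows essentially the same route as the paper: Fourier inversion, partitioning the index set into $\lfloor n/d\rfloor$ blocks of length $d$, using Lemma~\ref{lem: lin ind} to identify each block with a basis so that the sum over $\beta$ factors as $\bigl(\sum_{z\in\F_p}|\widehat\mu(z)|^L\bigr)^d$, and then invoking a one-dimensional Hal\'asz bound. The paper phrases the block reduction via H\"older (with a dual basis) rather than AM--GM, but these are equivalent here; and where you sketch the 1D bound via the return probability $\sigma^{*M}(0)$ and a local CLT, the paper simply cites it as a known estimate (Lemma~2.4 of \cite{M13}).
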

\begin{proof}
For simplicity, we assume $n$ is a multiple of $d$. We have
\begin{equation*}
\begin{split}
        \P\left[(x-\alpha_i)^{K_i}|f(x) \text{ }\forall i\right]&=\frac{1}{p^d}\sum_{\beta}\prod_{i} \E\left[e_p\left(\sum_{j\le H,k_j\le  K_j}\Tr(\beta_{j,k_j} \varepsilon_i D^{(k_j)}(\alpha_j^i))\right)\right].
\end{split}
\end{equation*}
By H\"older's inequality,
\begin{equation*}
\begin{split}
    &\sum_{\beta}\prod_i \left|\E\left[e_p\left(\sum_{j\le H,k_j\le K_j}\Tr(\beta_{j,k_j} \varepsilon_i D^{(k_j)}(\alpha_j^i))\right)\right]\right|\\
    &\leq \prod_{i=1}^{n/d}\left(\sum_{\beta } \prod_{h=0}^{d-1}\left|\E\left[e_p\left(\sum_{j\le H,k_j\le K_j}\Tr(\beta_{j,k_j} \varepsilon_{di+h} D^{(k_j)}(\alpha_j^{di+h}))\right)\right]\right|^{\frac{n}{d}}\right)^{\frac{d}{n}}
    \\&\leq \max_i\sum_{\beta} \prod_{h=0}^{d-1}\left|\E\left[e_p\left(\varepsilon_{di+h}\Tr\left(\sum_{j\le H,k_j\le K_j}\beta_{j,k_j}  D^{(k_j)}(\alpha_j^{di+h})\right)\right)\right]\right|^{\frac{n}{d}}.
\end{split}
\end{equation*}
By Lemma \ref{lem: lin ind}, the $d$ vectors $(D^{(k_j)}(\alpha_j^{di+h}))_{j\leq H, k_j\leq K_j}$ for $0\leq h\leq d-1$ (whose components are indexed by $j$ and $k_j$) form a basis. Let $\gamma_h$ be a dual basis with respect to the non-degenerate pairing
\begin{equation*}
    (x,y)\mapsto \Tr\left(\sum_{j\leq H, k_j\leq K_j}x_{j,k_j}y_{j,k_j}\right)
\end{equation*}
for this basis. Then writing $\beta=\sum c_j\gamma_j$, we have
\begin{equation*}
\begin{split}
    &\frac{1}{p^d}\sum_{\beta} \prod_{h=0}^{d-1}\left|\E\left[e_p\left(\sum_{j\le H,k_j\le K_j}\Tr(\beta_{j,k_j} \varepsilon_{di+h} D^{(k_j)}(\alpha_j^{di+h}))\right)\right]\right|^{\frac{n}{d}}
    \\=&\prod _{h=0}^{d-1}\left(\frac{1}{p}\sum_{c_h\in\F_p} \left|\E\left[e_p\left(c_h\varepsilon_{di+h}\right)\right]\right|^{\frac{n}{d}}\right).
\end{split}
\end{equation*}
But each factor can be bounded by $p^{-1}+C\left(\frac{\eta n}{d}\right)^{-\frac{1}{2}}$ (the proof can be found in Lemma 2.4 of \cite{M13} for example), giving the desired bound. 
\end{proof}

\section{Konyagin's argument for derivatives}
\label{sec: konyagin}
Recall the assumptions and notation of Section \ref{sec: setup}. In this section, we assume that $q=p$ is prime. We will sometimes need to treat the $X_i$ as $\Z$-valued random variables, and so we may lift $\mu$ to a measure on $\Z$ supported on $[0,p-1]$ in the obvious way.

The goal of this section is to prove the following bound for the Fourier coefficients of $\nu_n$.
\begin{theorem}
\label{thm: konyagin fourier bound}
Let $\nu_n$ and $\alpha_i$ be defined as in Section \ref{sec: setup}. 
There exists $C>0$ such that if $n\geq Cd\log p\log(d\log p)$, then either 
\begin{equation*}
    |\widehat{\nu_n}(\beta)|\leq e^{\frac{\eta n}{Cd\log p\log^5(d\log p)}},
\end{equation*}
for all $\beta$ whose components are all non-zero, or all $\alpha_i$ have multiplicative order at most $Cd\log p\log(d\log p)$. 
\end{theorem}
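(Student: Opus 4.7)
The plan is to extend the Konyagin-style argument used in \cite{BV19walk,BV19poly} for the scalar $ax+b$ process so that it simultaneously handles multiple roots $\alpha_1,\dots,\alpha_H$ and the polynomial-in-$i$ weights introduced by the Hasse derivatives. Expanding the Fourier transform via the trace pairing on $V$ yields
\begin{equation*}
    \widehat{\nu_n}(\beta) = \prod_{i=0}^{n} \widehat{\mu}(\xi_i),\qquad \xi_i = \sum_{j=1}^{H}\sum_{k\in\mathcal{K}_j}\Tr_{\F_{p^{e_j}}/\F_p}\!\left(\beta_{j,k}\binom{i}{k}\alpha_j^{i-k}\right)\in \F_p,
\end{equation*}
which after regrouping takes the form $\xi_i = \sum_j \Tr(\gamma_j(i)\,\alpha_j^i)$ with $\gamma_j(T)\in \F_{p^{e_j}}[T]$ a polynomial of degree at most $K_j$ in $T$, non-zero because all the $\beta_{j,k}$ are non-zero. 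Using the standard estimate $|\widehat{\mu}(\xi)|\leq \exp(-c\eta\,\|\xi/p\|^2)$ for $\xi\in\F_p$, a failure of the claimed Fourier bound forces the Bohr-type set $S_\delta = \{i\leq n : \|\xi_i/p\|\leq \delta\}$ to contain all but a small fraction of $[0,n]$, for a carefully chosen $\delta$ of size roughly $\log^{-O(1)}(d\log p)$.

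With this reduction in hand, I would lift everything to characteristic zero. Fix algebraic integer lifts $\tilde\alpha_j,\tilde\beta_{j,k}$ of $\alpha_j$ and $\beta_{j,k}$, set $\tilde\xi_i = \sum_j \Tr(\tilde\gamma_j(i)\tilde\alpha_j^i)\in\Z$, and observe that $i\in S_\delta$ forces $\tilde\xi_i$ to lie within $\delta p$ of some integer multiple of $p$. Choosing $d$ indices $i_1<\dots<i_d$ from $S_\delta$ in sufficiently general position, the Wronskian-type matrix with entries $\binom{i_\ell}{k}\alpha_j^{i_\ell-k}$ is non-singular by Lemma~\ref{lem: lin ind}, so one can solve for the lifts $\tilde\beta_{j,k}$ as an explicit linear combination, with coefficients of controlled height, of the $\tilde\xi_{i_\ell}$. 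Substituting this back into the formula for $\tilde\xi_i$ at a further index $i\in S_\delta$ produces a non-zero algebraic integer of unusually small Archimedean absolute value; the product formula combined with Dobrowolski's lower bound on the Mahler measure then forces the minimal polynomial of some $\tilde\alpha_j$ to be cyclotomic (or $x$), which translates into the multiplicative-order bound $\mathrm{ord}(\alpha_j)\leq Cd\log p\log(d\log p)$. A bootstrap, factoring out this root from the system and repeating the argument on the reduced configuration, propagates the conclusion to every $\alpha_j$.

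The main obstacle is precisely the polynomial factor $\gamma_j(i)$: it destroys the geometric self-similarity of $\alpha_j^i$ that underlies the sharper bounds of \cite{BV19walk,BV19poly,EV21}, and is the reason we cannot simply quote their mixing-time estimates. Konyagin's older argument, however, is robust enough to tolerate polynomial weights, at the cost of an extra $d^{O(1)}$ factor in the final bound; the Wronskian determinant above plays the role of the ordinary Vandermonde determinant in the scalar case, and Lemma~\ref{lem: lin ind} supplies the necessary non-degeneracy. A secondary subtlety is the simultaneous handling of the $\alpha_j$: some may well have very small multiplicative order while others do not, and one must isolate the ``worst offender'' carefully, which I plan to do by a dichotomy on which $\alpha_j$ contributes the dominant failure of equidistribution. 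The assumption $n\geq Cd\log p\log(d\log p)$ is exactly what is needed so that, even after all pigeonholing losses and after deleting the fraction of $[0,n]$ lying outside $S_\delta$, the remaining good set is long enough to trigger the Mahler-measure contradiction.
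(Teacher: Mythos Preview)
Your high-level plan---factor the Fourier transform as $\prod_i \widehat{\mu}(\xi_i)$, use $|\widehat{\mu}(\xi)|\le \exp(-c\eta\|\xi/p\|^2)$, and infer from failure of the bound that most $\xi_i$ lie near multiples of $p$---matches the paper. But the mechanism you propose after that point is not Konyagin's argument, and I do not see how to make it work.

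The paper does \emph{not} lift the $\beta_{j,k}$ or the $\alpha_j$ individually. It works with the single integer sequence $\widehat{S}_i\in[-p/2,p/2]$ (the centered lift of $\xi_i$) and studies the lattice $\Lambda(X)$ of integer polynomials $f(x)=\sum c_i x^i$ satisfying $\sum c_i \widehat{S}_{i+j}=0$ for all $j$. Hoeffding plus pigeonhole (over $\{-1,1\}^{E+1}$ with $E\approx 3d\log p$) produces a non-zero $f_1\in\Lambda(X)$ with $\{-1,0,1\}$ coefficients and degree at most $E$; a gcd argument (\cite[Cor.~28]{BV19poly}) then yields a minimal generator $f_0$. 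The crucial step you are missing is the \emph{prime $q$ trick}: for a suitable prime $q\asymp L/E$ one repeats the pigeonhole argument on the dilated sequence $(\widehat{S}_{qi})$ to get a second polynomial $g_2(x^q)\in\Lambda(X)$, and since $f_0\mid g_2(x^q)$ one deduces $M(f_j)\le M(g_2)^{1/q}\le (E+1)^{1/(2q)}$ for each irreducible factor $f_j$. Only \emph{then} does Dobrowolski force $f_j$ cyclotomic. Your ``small algebraic integer plus product formula'' step has no analogue of this leverage: inverting the Wronskian matrix over a number ring introduces determinants of height roughly $p^d$, and I do not see how you obtain any Mahler measure upper bound on the minimal polynomial of $\alpha_j$ from a single small value.

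Two further points. First, the bootstrap you describe (factor out one $\alpha_j$, repeat) is unnecessary: the paper's Lemma~\ref{lem: non-degenerate eqs} shows that any $f_0\in\Lambda(X)$ of degree at most $d-1$ automatically satisfies $D^{(k)}f_0(\alpha_j)=0$ for \emph{all} $j$ and $k\le K_j$, so every $\alpha_j$ is simultaneously a root of $f_0$ and the Mahler measure argument applies to each irreducible factor at once. Second, your proposed lifting of $\beta_{j,k}$ to characteristic zero is not well-defined---these are arbitrary field elements with no canonical algebraic-integer lift---whereas lifting the scalar $\xi_i\in\F_p$ to $\widehat{S}_i\in\Z$ is unambiguous and is what makes the integer-lattice machinery run.
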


\subsection{Bounds on Fourier coefficients}
Our proof of Theorem \ref{thm: konyagin fourier bound} closely follows the proof of Proposition 25 in \cite{BV19poly}, using an argument due to Konyagin \cite{K92}.

Define
\begin{equation*}
    S_n :=\sum_{i\le H} \sum _{k\in \mathcal{K}_j} \Tr\left(\beta_{i,k} \alpha_i^{n-k}{n\choose k}\right).
\end{equation*}
Recall that we have assumed $\beta_{j,k}\neq 0$ for all $j\leq H$ and $k\in\mathcal{K}_j$.

\begin{lemma}
\label{lem: non-degenerate eqs}
Suppose that $f(x)=\sum_{i=0}^n a_i x^i\in \F_p[x]$ is of degree $n$, and that $a_0S_i+\dotsc +a_nS_{i+n}=0$ for all $i=i_0,i_0+1, \dotsc, i_0+d-1$. Then $D^{(k)}f(\alpha_j)=0$ for all $j$ and $k\leq  K_j$. In particular, $a_0S_i+\dotsc +a_nS_{i+n}=0$ holds for all $i$.
\end{lemma}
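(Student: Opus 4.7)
The plan is to view the $d$ equations $\sum_m a_m S_{i+m}=0$ for $i=i_0,\dots,i_0+d-1$ as an $\F_p$-linear system in the $d$ unknowns $\gamma_{j,b}:=D^{(b)}f(\alpha_j)\in\F_{p^{e_j}}$ (for $j\leq H$, $0\leq b\leq K_j$), and show the coefficient matrix is non-singular, which forces $\gamma=0$. First, since $D^{(k)}(x^n)(\alpha_j)=\binom{n}{k}\alpha_j^{n-k}$, we have $S_n=\sum_{j,\,k\in\mathcal K_j}\Tr(\beta_{j,k}D^{(k)}(x^n)(\alpha_j))$, so by linearity and the Hasse product rule $D^{(k)}(x^i f)=\sum_{a+b=k}D^{(a)}(x^i)\,D^{(b)}(f)$,
\[
\sum_{m=0}^n a_m\,S_{i+m} \;=\; \sum_{j,\,k\in\mathcal K_j}\Tr\Bigl(\beta_{j,k}\sum_{b=0}^{k}\tbinom{i}{k-b}\alpha_j^{i-(k-b)}\,\gamma_{j,b}\Bigr).
\]
Swapping the order of summation and reindexing by $m=k-b$ rewrites this as $\sum_j\sum_{b=0}^{K_j}\Tr(\gamma_{j,b}\,\tilde v_{j,b}(i))$, where
\[
\tilde v_{j,b}(i) \;:=\; \sum_{m\,:\,b+m\in\mathcal K_j}\beta_{j,b+m}\tbinom{i}{m}\alpha_j^{i-m}.
\]

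The key observation is that $\tilde v(i)=\tilde M\,w(i)$, where $w(i)_{j,m}=\binom{i}{m}\alpha_j^{i-m}$ is the vector from Lemma~\ref{lem: lin ind} and $\tilde M$ is the block-diagonal operator on $V=\prod_j(\F_{p^{e_j}})^{K_j+1}$ whose $j$-th block is $\tilde M_j[b,m]=\beta_{j,b+m}\mathbf 1[b+m\in\mathcal K_j]$. Since $K_j=\max\mathcal K_j\in\mathcal K_j$, the anti-diagonal entries ($b+m=K_j$) are all equal to $\beta_{j,K_j}\neq 0$, while every entry below the anti-diagonal ($b+m>K_j$) vanishes; hence $\tilde M_j$ is anti-triangular with $\det\tilde M_j=\pm\beta_{j,K_j}^{K_j+1}\neq 0$ over $\F_{p^{e_j}}$, and $\tilde M$ is an $\F_p$-linear automorphism of $V$. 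By Lemma~\ref{lem: lin ind} the vectors $w(i_0),\dots,w(i_0+d-1)$ form an $\F_p$-basis of $V$, so the same holds for their images $\tilde v(i_0),\dots,\tilde v(i_0+d-1)$. The hypothesis then asserts that $\gamma\in V$ is annihilated by $d$ spanning linear functionals under the non-degenerate trace pairing $(\gamma,v)\mapsto\sum_{j,b}\Tr(\gamma_{j,b}v_{j,b})$ on $V$, forcing $\gamma=0$; this is precisely $D^{(b)}f(\alpha_j)=0$ for every $j\le H$ and every $b\le K_j$. The ``in particular'' statement is immediate, since once $\gamma=0$ the displayed identity gives $\sum_m a_m S_{i+m}=0$ for \emph{every} $i$, not just the $d$ consecutive values assumed.

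The main obstacle I foresee is purely bookkeeping — swapping the summation orders cleanly and identifying the anti-triangular shape of each $\tilde M_j$ — but the heart of the argument is the equivalence ``$\tilde M$ invertible $\iff$ $\beta_{j,K_j}\neq 0$ for all $j$'', which coincides exactly with the standing hypothesis on the $\beta$'s. No further delicate estimates are needed; the conclusion comes entirely from linear algebra on top of Lemma~\ref{lem: lin ind}.
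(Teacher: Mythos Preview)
Your argument is correct and is essentially the paper's proof, repackaged. Both proofs rewrite $\sum_m a_m S_{i+m}$ via the Hasse product rule, invoke Lemma~\ref{lem: lin ind} and non-degeneracy of the trace pairing, and then exploit the (anti-)triangular structure coming from $K_j=\max\mathcal K_j\in\mathcal K_j$. The only cosmetic difference is the order of the last two steps: the paper first pairs against the $w(i)$ basis to obtain the system $\sum_{k\in\mathcal K_j,\,k\ge l_2}\beta_{j,k}D^{(k-l_2)}f(\alpha_j)=0$ and then solves it by back-substitution starting from $l_2=K_j$, whereas you first observe that the Hankel-type block $\tilde M_j[b,m]=\beta_{j,b+m}\mathbf 1[b+m\in\mathcal K_j]$ is invertible (anti-triangular with anti-diagonal $\beta_{j,K_j}\ne 0$) and then pair against the transformed basis $\tilde M w(i)$. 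Since $\tilde M_j$ is symmetric, your invertibility statement is exactly the paper's back-substitution in matrix form.
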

\begin{proof}
The hypothesis is equivalent to the statement that 
\begin{equation*}
    \sum _j \sum _k \Tr\left(\beta_{j,k}D^{(k)}(f(x)x^i)(\alpha_j)\right)=0
\end{equation*}
for $i=i_0,\dotsc, i_0+d-1$. Expanding out the derivatives, we obtain
\begin{equation*}
    \sum _j \sum _k \sum_{l_1+l_2=k} \Tr\left(\beta_{j,k}D^{(l_1)}f(\alpha_j)\alpha_j^{i-l_2}{i\choose l_2}\right)=0.
\end{equation*}

The vectors $(\alpha_j^{m-k}{m\choose k})\in\prod _j \prod _{k\leq  K_j}\F_{p^{e_j}}$ for $m=i_0,\dotsc, i_0+d-1$ form a basis by Lemma \ref{lem: lin ind}.

But the expression above can be written as
\begin{equation*}
\begin{split}
    &\sum _j \sum _k \sum_{l_1+l_2=k} \Tr\left(\beta_{j,k}D^{(l_1)}f(\alpha_j)\alpha_j^{i-l_2}{i\choose l_2}\right)
    \\=&\sum _j \sum_{l_2} \Tr\left(\left(\sum_{k\geq l_2}\beta_{j,k}D^{(k-l_2)}f(\alpha_j)\right)\alpha_j^{i-l_2}{i\choose l_2}\right).
\end{split}
\end{equation*}
This implies that
\begin{equation*}
    \sum_{k\geq l_2}\beta_{j,k}D^{(k-l_2)}f(\alpha_j)=0
\end{equation*}
    for all $j$ and $l_2$. This is a system of $ K_j+1$ many variables (the $D^{(k)}f(\alpha_j)$), and it can be solved by back substitution since starting from $l_2= K_j$, there is at most one new variable in each new equation. It follows that $D^{(k)}f(\alpha_j)=0$ for all $j\le H$ and $k\leq  K_j$.
\end{proof}

For $X=(x_0,\dotsc, x_N)\in\Z^{N+1}$,
we write $\Lambda_{E}(X)$ for the set of polynomials $f(x)=\sum_{0\le i \le n} c_i x^i \in \Z[x]$, such that $c_0x_j +\dotsc +c_n x_{j+n}=0$ holds for all $j\leq N-E$. When $E=N$, we write $\Lambda(X) = \Lambda_{N}(X)$ denote the set of polynomials $f(x) \in \Lambda_{deg(f)} (X)$. We now recall two results from \cite{BV19poly} which we will need to use.

\begin{lemma}[{\hspace{1sp}\cite[Lemma 27]{BV19poly}}]
With the same notations as above, if $f_1,f_2\in \Lambda(X)$ and $\deg f_1+\deg f_2\leq N$, then $\gcd(f_1,f_2)\in \Lambda(X)$.
\end{lemma}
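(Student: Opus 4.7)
The plan is to use a generating-function reformulation. Encode the sequence $X = (x_0, \ldots, x_N)$ by $G(x) = \sum_{j=0}^N x_j x^j \in \Z[x]$, and for a polynomial $f$ of degree $n$ let $\tilde f(x) = x^n f(1/x)$ denote its reversal. A direct coefficient computation identifies the coefficient of $x^k$ in $\tilde f(x) G(x)$, for $n \le k \le N$, with the $f$-recurrence $c_0 x_{k-n} + \cdots + c_n x_k$. Consequently, $f \in \Lambda(X)$ if and only if $\tilde f(x) G(x) = R(x) + x^{N+1} T(x)$ for some polynomial $R$ with $\deg R < n$.

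Given $f_1, f_2 \in \Lambda(X)$, write $f_i = g h_i$ with $g = \gcd(f_1, f_2)$ of degree $n$ and $\gcd(h_1, h_2) = 1$. Reversal is multiplicative, so $\tilde f_i = \tilde g \tilde h_i$, and $\gcd(\tilde h_1, \tilde h_2) = 1$ in $\Q[x]$: the reversals have nonzero constant term (equal to the leading coefficient of $h_i$), so any common root would be nonzero and its inverse would be a common root of $h_1, h_2$. From the identities $\tilde f_i G = R_i + x^{N+1} T_i$ with $\deg R_i < n_i := \deg f_i$, multiplying by $\tilde h_{3-i}$ and subtracting eliminates $G$:
\begin{equation*}
    R_1 \tilde h_2 - R_2 \tilde h_1 = x^{N+1}(\tilde h_1 T_2 - \tilde h_2 T_1).
\end{equation*}
By the hypothesis $n_1 + n_2 \le N$, the left side has degree at most $n_1 + n_2 - n - 1 \le N - 1$, while the right side is divisible by $x^{N+1}$, so both must vanish. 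Hence $R_1 \tilde h_2 = R_2 \tilde h_1$, and coprimality yields $R_1 = \tilde h_1 R$ and $R_2 = \tilde h_2 R$ for a common $R \in \Q[x]$.

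Substituting $R_1 = \tilde h_1 R$ back into $\tilde f_1 G = R_1 + x^{N+1} T_1$ and canceling $\tilde h_1$ (which is coprime to $x^{N+1}$ since its constant term is nonzero) produces $\tilde g G = R + x^{N+1} T$. It remains to verify $\deg R < n$. From $\deg R_i < n_i$ and $\deg \tilde h_i = \deg h_i - v_x(h_i)$ (with $v_x$ the $x$-adic valuation), one obtains $\deg R \le n - 1 + v_x(h_i)$ for each $i$; coprimality of $h_1, h_2$ forces $\min(v_x(h_1), v_x(h_2)) = 0$, so the better of the two estimates gives $\deg R \le n - 1$, completing the proof. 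This final degree bookkeeping is the main subtlety I anticipate: using only one of the relations $R_i = \tilde h_i R$ would allow $\deg R$ to exceed $n$ when some $h_i$ has $x$ as a factor, and only the combination of both relations with the coprimality hypothesis closes the gap.
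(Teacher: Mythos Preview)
Your argument is correct. The paper does not supply its own proof of this lemma; it is quoted verbatim from \cite[Lemma 27]{BV19poly}, so there is no in-paper argument to compare against. Your generating-function/reversal reformulation is a standard and clean route to this fact about linear recurrences, and all the steps check out: the equivalence $f\in\Lambda(X)\iff \tilde f G = R + x^{N+1}T$ with $\deg R<\deg f$ is a direct coefficient computation; multiplicativity of reversal and coprimality of $\tilde h_1,\tilde h_2$ over $\Q[x]$ follow as you say; the degree bound $n_1+n_2-n-1\le N-1$ forces the cross term to vanish; and the divisibility step uses only that $\tilde h_1(0)\ne 0$. The point you single out at the end---that one must use \emph{both} identities $R_i=\tilde h_i R$ together with $\min(v_x(h_1),v_x(h_2))=0$ to pin down $\deg R\le n-1$---is exactly the place where a careless argument would fail, and you handle it correctly.
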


\begin{corollary}[{\hspace{1sp}\cite[Corollary 28]{BV19poly}}]
\label{cor: ideal gen}
Suppose that $\Lambda(X)$ contains a polynomial of degree at most $N/2$. Then there exists a unique up to $\pm$ polynomial $f_0\in\Lambda(X)$ of minimal degree with coprime coefficients, and $P\in\Lambda(X)$ if and only if $f_0|P$ for $P$ such that $\deg(P)\leq N-\deg(f_0)$.
\end{corollary}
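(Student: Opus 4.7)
The plan is to deduce the corollary from the preceding Lemma together with the standard primitivity/content formalism for polynomials in $\Z[x]$. First I pick any $f_0\in\Lambda(X)$ of minimal positive degree (which exists and satisfies $\deg f_0\le N/2$ by hypothesis) and divide out its content, obtaining a primitive polynomial in $\Lambda(X)$ of the same degree. This rescaling is legitimate because $\Lambda(X)$ is cut out by homogeneous linear equations in the coefficients, so scaling by a nonzero rational that clears denominators preserves both integrality and membership.

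For uniqueness up to sign, suppose $f_0,f_0'\in\Lambda(X)$ are both of minimal degree with coprime coefficients. Since $\deg f_0+\deg f_0'\le N$, the preceding Lemma gives $g:=\gcd_{\Z[x]}(f_0,f_0')\in\Lambda(X)$. Minimality of $\deg f_0$ forces $\deg g=\deg f_0$; as $g$ divides both primitive polynomials $f_0,f_0'$, Gauss's Lemma yields $f_0=\pm f_0'$.

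For the ``if'' direction of the characterization, suppose $f_0\mid P$ with $\deg P\le N-\deg f_0$. Writing $P=f_0\,h$ with $f_0=\sum_j a_j x^j$, $h=\sum_k b_k x^k$, and $P=\sum_i c_i x^i$ (so $c_i=\sum_{j+k=i} a_j b_k$), I compute
\begin{equation*}
\sum_i c_i\, x_{l+i} \;=\; \sum_k b_k \Bigl(\sum_j a_j\, x_{(l+k)+j}\Bigr),
\end{equation*}
and the inner bracket vanishes whenever $l+k\le N-\deg f_0$; since $k$ ranges only up to $\deg P-\deg f_0$, the whole expression vanishes for every $l\le N-\deg P$, which is exactly the recurrence that certifies $P\in\Lambda(X)$.

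Conversely, if $P\in\Lambda(X)$ and $\deg P\le N-\deg f_0$, then $\deg f_0+\deg P\le N$, so the preceding Lemma gives $\gcd_{\Z[x]}(f_0,P)\in\Lambda(X)$. Minimality of $\deg f_0$ forces the degree of this gcd to equal $\deg f_0$, and primitivity of $f_0$ together with Gauss's Lemma promotes $\Q[x]$-associateness to $\Z[x]$-divisibility, yielding $f_0\mid P$. The only substantive obstacle in the whole argument is this content/primitivity bookkeeping needed to pass cleanly between $\Q[x]$ and $\Z[x]$; once the preceding Lemma and the observation that the relevant degree sums stay within the window of length $N$ are in place, everything else is a short manipulation of recurrences.
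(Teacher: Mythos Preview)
Your proof is correct. Note that the paper does not actually prove this corollary; it is quoted verbatim from \cite{BV19poly} (as Corollary~28 there) and stated without proof, so there is no ``paper's own proof'' to compare against. Your argument is the natural one: use the preceding Lemma to show the gcd of any two elements whose degrees sum to at most $N$ again lies in $\Lambda(X)$, deduce existence and uniqueness of a minimal-degree primitive generator $f_0$, and then verify the divisibility characterization directly from the recurrence in one direction and via the gcd lemma plus minimality in the other. The content/primitivity bookkeeping via Gauss's Lemma is exactly what is needed to pass from a $\Q[x]$-associate to a $\Z[x]$-divisor, and your handling of it is clean.
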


We can now prove the key tool in the proof of Theorem \ref{thm: konyagin fourier bound}.
\begin{proposition}\label{prop:konyagin}
Let $\widehat{S}_n$ denote the representative of $S_n$ in $[-p/2,p/2]$.

Let $L\geq 200 d\log p\log (d\log p)$ Suppose that
\begin{equation*}
    \sum_{n=n_0}^{n_0+L} (\widehat{S}_n)^2\leq \frac{p^2}{8\log(4L)}.
\end{equation*}
Then for each $j$, there is a polynomial $f_j$ of degree at most $3d\log p$ and Mahler measure at most $(d\log p)^{30 d\log p/L}$ such that $f_j(\alpha_j)=0$.
\end{proposition}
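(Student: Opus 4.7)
My plan is to follow the Konyagin-style argument of \cite{K92,BV19poly}, with the central adaptation being that, thanks to Lemma \ref{lem: non-degenerate eqs}, a single integer polynomial $F$ satisfying a mod-$p$ recurrence for $d$ consecutive indices will simultaneously vanish at every $\alpha_j$ with the prescribed Hasse-derivative multiplicities. Concretely, I would fix $D:=3d\log p$ and consider the sublattice
\[
\Lambda=\Bigl\{c\in\Z^{D+1}:\sum_i c_i S_{n+i}\equiv 0\pmod p\ \text{for}\ n=n_0,\dots,n_0+d-1\Bigr\}\subseteq\Z^{D+1}.
\]
The defining kernel gives $\Lambda$ covolume at most $p^d$ in $\R^{D+1}$, so by Minkowski's first theorem it contains a nonzero $F$ with $\|F\|_2\leq C\sqrt{D+1}\,p^{d/(D+1)}\ll\sqrt{d\log p}$. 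Applying Lemma \ref{lem: non-degenerate eqs} to $F$ then yields $D^{(k)}F(\alpha_j)=0$ for every $j$ and every $k\leq K_j$; in particular $F(\alpha_j)=0$ for all $j$, and $\sum_i c_i S_{n+i}\equiv 0\pmod p$ holds for \emph{all} $n\in\Z$. Setting $f_j:=F$ for each $j$ already satisfies the degree bound, so it remains to bound $M(F)$.

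The crude estimate $M(F)\leq\|F\|_2\ll\sqrt{d\log p}$ is far too weak to reach $(d\log p)^{30d\log p/L}$, and sharpening it uses the $L^2$-small hypothesis. Since $\sum_i c_i\widehat{S}_{n+i}\in p\Z$ for every $n$, Cauchy--Schwarz gives
\[
\sum_{n=n_0}^{n_0+L-D}\Bigl|\sum_i c_i\widehat{S}_{n+i}\Bigr|^2\;\leq\;\|F\|_2^2(D+1)\sum_m\widehat{S}_m^2\;\leq\;\|F\|_2^2(D+1)\cdot\frac{p^2}{8\log(4L)}.
\]
Chebyshev then implies that for a positive proportion of $n$ the integer $\sum_i c_i\widehat{S}_{n+i}$ is smaller than $p/2$ in absolute value (after, if needed, first refining $F$ within $\Lambda$ via a successive-minima argument to reduce $\|F\|_2$), and being a multiple of $p$ it must therefore vanish identically as an integer on an interval of length $\gtrsim L$. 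Thus $(\widehat{S}_n)$ satisfies the \emph{exact} integer linear recurrence with characteristic polynomial $F$ on a long subinterval.

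On that subinterval, write $\widehat{S}_n=\sum_\beta P_\beta(n)\,\beta^n$, where $\beta$ ranges over the complex roots of $F$ and $P_\beta$ has degree less than the multiplicity of $\beta$ (such polynomial prefactors are exactly what the Hasse-derivative setting forces, echoing the structure in Lemma \ref{lem: non-degenerate eqs}). The bound $\sum\widehat{S}_n^2\leq p^2/(8\log(4L))$ severely restricts this decomposition: any root with $|\beta|>1+\delta$ would contribute growth like $|\beta|^{2L}$ to the $L^2$ sum, forcing $\delta$ to be tiny. A quantitative Parseval/contour computation then converts this into $\log M(F)=\frac{1}{2\pi}\int_0^{2\pi}\log|F(e^{i\theta})|\,d\theta\leq 30\,d\log p\,\log(d\log p)/L$, paralleling the proof of Proposition~25 in \cite{BV19poly}.

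The main obstacle is precisely this final quantitative step: converting an approximate integer linear recurrence into sharp control on the complex moduli of the roots of $F$, and from there into a Mahler measure bound, while carefully tracking the multiplicities produced by the Hasse derivatives (which manifest as the polynomial prefactors $P_\beta(n)$) and the precise dependence on $L$, $d$, and $p$. The assumption $L\geq 200d\log p\log(d\log p)$ is calibrated so that the Cauchy--Schwarz step yields a long enough integer-recurrence interval and the spectral analysis produces the exponent $30d\log p/L$.
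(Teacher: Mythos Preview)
Your outline has the right overall shape---construct a short integer annihilator via a lattice argument, upgrade to an exact integer recurrence using the $L^2$-smallness of $(\widehat S_n)$, then extract a Mahler-measure bound---but both the second and third steps contain genuine gaps.

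\textbf{Upgrading to an integer recurrence.} Minkowski indeed gives $F$ with $\|F\|_\infty<2$ and $\sum_i c_i S_{n+i}\equiv 0\pmod p$ for all $n$ (after Lemma~\ref{lem: non-degenerate eqs}), but your Cauchy--Schwarz step then yields only
\[
\sum_{n=n_0}^{n_0+L-D}\Bigl|\sum_i c_i\widehat S_{n+i}\Bigr|^2\;\le\;\|c\|_2^2\,(D+1)\cdot\frac{p^2}{8\log(4L)}\;\approx\;\frac{(d\log p)^2\,p^2}{\log L}.
\]
Since each nonzero term is at least $p^2$, this bounds the number of nonzero terms by roughly $(d\log p)^2/\log L$, which for $L\approx 200\,d\log p\,\log(d\log p)$ is $\gg L$: no information whatsoever. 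You would need $L\gtrsim(d\log p)^2$, far stronger than the hypothesis, and no ``successive-minima refinement within $\Lambda$'' can repair this, because $\Lambda$ only encodes $d$ congruence conditions. The paper avoids the problem by \emph{coupling} the two requirements: Hoeffding's inequality applied to random $\pm1$ coefficients shows that for at least half of all sign vectors $x\in\{\pm1\}^{E+1}$ one has $\bigl|\sum_i x_i\widehat S_{n+i}\bigr|<p/2$ simultaneously for \emph{every} $n$ in the window; pigeonhole modulo $p^d$ among these $\ge 2^E\ge p^d$ good vectors then produces two whose difference has $\{-1,0,1\}$ coefficients, satisfies the mod-$p$ recurrence, \emph{and} has all integer sums of absolute value $<p$---hence exactly zero.

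\textbf{The Mahler-measure bound.} Your proposed spectral argument writes $\widehat S_n=\sum_\beta P_\beta(n)\beta^n$ and tries to bound $|\beta|$ from the $L^2$ smallness. But the coefficients $P_\beta$ are completely uncontrolled: a root with $|\beta|>1$ can carry an arbitrarily small $P_\beta$, so the $L^2$ bound places no constraint on individual moduli. (Incidentally, the polynomial prefactors come from repeated \emph{complex} roots of $F$ and have nothing to do with the Hasse derivatives in the finite-field picture.) The argument of \cite{BV19poly} that you cite does \emph{not} proceed via Parseval or contour integration. The key device---reproduced in the paper---is a \emph{thinning trick}: choose a prime $q\in(s,2s]$ with $s=L/6E$ such that no ratio of roots of the minimal annihilator $f_0$ is a $q$th root of unity (Lemma~\ref{lem: no roots of unity}), and rerun the Hoeffding/pigeonhole argument on the subsequence $(\widehat S_{qm+n})_m$ to produce $g_2\in\Z[x]$ of degree $\le E$ with $\{-1,0,1\}$ coefficients such that $f_2(x):=g_2(x^q)\in\Lambda$. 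Corollary~\ref{cor: ideal gen} then forces $f_0\mid f_2$, so every root $\beta$ of $f_0$ satisfies $g_2(\beta^q)=0$; the $\beta^q$ are distinct by choice of $q$, whence $M(f_j)\le M(g_2)^{1/q}\le(E+1)^{1/(2q)}$, which is exactly the claimed bound.
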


\begin{proof}
Set $E=3\lceil d\log p\rceil$. We first show that there is a polynomial $f_1\neq 0$ of degree at most $E$ such that $f_1\in \Lambda_E(\{\widehat{S}_n\}_{n=n_0}^{n_0+L})$.

Let $X_0,\dotsc, X_E$ be a sequence of independent random variables uniform on $\{-1,1\}$. By Hoeffding's inequality,
\begin{equation*}
    \P\left[\left|\sum_{0\le i \le E} X_i \widehat{S}_{i+n}\right|\geq \frac{p}{2}\right]\leq \frac{1}{2L}
\end{equation*}
for any $n=n_0,\dotsc, n_0+L-E$. By a union bound,
\begin{equation*}
    \P\left[\left|\sum_{0\le i \le E}  X_i \widehat{S}_{i+n}\right|\geq \frac{p}{2}\text{ for some $n$}\right]\leq \frac{1}{2},
\end{equation*}
and so the set $\Omega$ of $x=(x_0,\dotsc, x_E)\in \{-1,1\}^{E+1}$ such that $\left|\sum x_i \widehat{S}_{i+n}\right|< \frac{p}{2}$ for all $n$ has size at least $2^E\geq p^d$. Then by the pigeonhole principle, there exists $x,y\in \Omega$, $x\neq y$, such that
\begin{equation*}
    \sum x_i \widehat{S}_{i+n}=\sum y_i \widehat{S}_{i+n} \pmod{p}
\end{equation*}
for $n=n_0,\dotsc, n_0+d-1$.

Let $a_i=(x_i-y_i)/2$. By Lemma \ref{lem: non-degenerate eqs}, we have $\sum a_i \widehat{S}_{i+n}=0$ for all $n$. Then $f_1(x)=\sum a_i x^i\in \Lambda_E(\{\widehat{S}_n\}_{n=0}^L)\subseteq \Lambda(\{\widehat{S}_n\}_{n=0}^{\lceil 2L/3\rceil})$. Since $\deg f_1\leq \lceil 2L/3\rceil/2$, by Corollary \ref{cor: ideal gen}, we have a polynomial $f_0\in \Lambda(\{\widehat{S}_n\}_{n=0}^{\lceil 2L/3\rceil})$ with relatively prime coefficients and of minimal degree, unique up to a sign.

By Lemma \ref{lem: non-degenerate eqs}, we have $f_0(\alpha_j)=0$ for all $j$, and so for each $\alpha_j$, there is some $f_j$ an irreducible (over $\Z$) factor of $f_0$ such that $f_j(\alpha_j)=0$.

Now fix $j$, and note that $\deg f_j\leq E$, so what remains is to show $M(f_j)\leq (d\log p)^{30 d\log p/L}$. Let $\gamma_i$ denote the roots of $f_j$. Let $s=L/6E$. Then by Lemma \ref{lem: no roots of unity}, there exists a prime $q\in (s,2s]$ such that $\gamma_k/\gamma_l$ is not a $q$th root of unity for all $k\neq l$. This implies that the $\beta_i^q$ are distinct.

By the same argument as above, we can find a polynomial $f_2(x)=g_2(x^q)$ with $g_2=\sum b_i x^i$ of degree at most $E$ having coefficients in $\{-1,0,1\}$, and such that $\sum b_i \widehat{S}_{iq+n}=0$ for all $n=0,\dotsc, L-Eq$. This implies that $f_2\in \Lambda_{Eq}(\{\widehat{S_n}\}_{n=0}^L\subseteq \Lambda(\{\widehat{S}_{n}\}_{n=0}^{\lceil 2L/3\rceil})$ as $Eq\leq L/3$. By Corollary \ref{cor: ideal gen}, as $\lceil 2L/3\rceil\geq 2Eq$, we have $f_0$ divides $f_2$. Then the $\beta_i^q$ are all roots of $g_2$, and so $M(f_j)\leq M(g_2)^{\frac{1}{q}}\leq (E+1)^{\frac{1}{2q}}$ using $M(g_2)\leq \sqrt{\sum b_i^2}$. Since $q\geq L/6E$, we have $M(f_j)\leq (d\log p)^{30d\log p/L}$.
\end{proof}

Finally, we need the following lemma to prove Theorem \ref{thm: konyagin fourier bound}.
\begin{lemma}[{\hspace{1sp}\cite[Lemma 26]{BV19poly}}]
\label{lem: no roots of unity}
Let $a_1,\dotsc, a_n$ be the roots of an irreducible polynomial $f\in\Z[x]$. Let $s\geq 4\log n$. If $n$ is larger than some absolute constant, then there exists a prime $q\in (s,2s]$ such that $a_i/a_j$ is not a $q$-th root of unity for any $i\neq j$.
\end{lemma}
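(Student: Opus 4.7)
The plan is to produce the desired prime $q$ by a counting argument: by Chebyshev's theorem, the number of primes in $(s,2s]$ is at least $c_0 s/\log s$ for an absolute constant $c_0>0$, so it suffices to upper bound the set of \emph{bad} primes (those $q \in (s,2s]$ for which some $a_i/a_j$ has exact multiplicative order $q$) by something strictly smaller. Under $s \geq 4\log n$ the lower bound is of order $\log n/\log\log n$, and the task is to beat this.

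The structural input is that if $q$ is bad then $\zeta_q = a_i/a_j \in \Q(a_i,a_j) \subseteq L$, the splitting field of $f$, so in particular $q-1 \leq n(n-1)$. The Galois group $G = \mathrm{Gal}(L/\Q)$ acts transitively on $\{a_1,\ldots,a_n\}$, hence transitively on the equivalence classes of the relation $a \sim b \iff a/b \in \mu_\infty$; all classes share a common size $k$, and the ratios within each class together with their Galois conjugates lie in a common cyclic group $\mu_{M'} \subset L$. Every bad prime divides $M'$.

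I would then split into cases by the size $k$. When $k = n$ (a single class, as in $f(x) = x^n - c$), we have $M' \mid n$, so the bad primes are the prime divisors of $n$ in $(s,2s]$; since any two such primes multiply to more than $s^2$, their count is at most $\log n/\log s$, which for $s \geq 4\log n$ is $\leq \log n/\log(4\log n)$, strictly below $c_0 \cdot 4\log n/\log(4\log n)$ as soon as $c_0 > 1/4$, which holds with explicit constants for $s$ above an absolute threshold. For small $k$ I use the Galois-orbit principle: each bad prime $q$ forces at least $q - 1$ ordered pairs to have ratio a primitive $q$-th root of unity (since $G$ surjects onto $(\Z/q)^\times$ via its action on $\zeta_q$), and summing over bad primes against the pool of at most $n(k-1)$ pairs with root-of-unity ratio yields $|\{\text{bad primes}\}| \leq n(k-1)/(s-1)$, which is comfortably small for $k$ of order $o(\log n/\log\log n)$.

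The main obstacle is the intermediate regime where $k$ is neither very small nor equal to $n$, and the coarse bound $n(k-1)/s$ is too weak. The required refinement is to show that the order $M'$ of the cyclic group of ratios is polynomial in $n$, so that $\omega(M') \leq \log M'/\log s = O(\log n/\log\log n)$ falls below the Chebyshev count. The naive bound $[L:\Q] \leq n!$ gives only $\phi(M') \leq n!$, which is far too weak; the sharper bound should come from observing that $\zeta_{M'}$ is generated by finitely many ratios $a_i/a_j$, each lying in a field of degree $\leq n(n-1)$, combined with Galois-invariance of $M'$ across classes and constraints on the abelian quotient of transitive subgroups of $S_n$. This careful Galois-theoretic reduction, together with a sufficiently explicit Chebyshev constant $c_0$, is the technical crux of the argument.
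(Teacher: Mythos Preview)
The paper does not supply its own proof of this lemma; it is quoted verbatim from \cite[Lemma~26]{BV19poly} and used as a black box in the proof of Proposition~\ref{prop:konyagin}. So there is no argument in the present paper to compare your proposal against.

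That said, your outline has a genuine gap beyond the incompleteness you already flag. The assertion that ``when $k=n$ we have $M'\mid n$'' is false. Take $f$ to be the $p$-th cyclotomic polynomial for a prime $p$: then $n=p-1$, all roots lie in a single class ($k=n$), and every ratio $a_i/a_j$ is a nontrivial $p$-th root of unity, so $M'=p$, which does not divide $n=p-1$. The example $f(x)=x^n-c$ you cite is misleading because there the roots happen to be a single coset of $\mu_n$; in general the set $\{a_i/a_1:i\}$ need not be a subgroup of $\mu_\infty$, and the group it generates can have order coprime to $n$.

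Your instinct in the last paragraph is closer to the actual mechanism: the decisive bound comes not from the individual fields $\Q(a_i,a_j)$ but from the fact that $\Q(\zeta_q)$, for every bad prime $q$, lies inside the splitting field $L$, and the compositum of these cyclotomic fields is an abelian extension of $\Q$. Hence $\prod_{q\ \text{bad}}(q-1)$ divides $|G^{\mathrm{ab}}|$ where $G=\mathrm{Gal}(L/\Q)\le S_n$ is transitive. The needed structural input is a bound of the shape $|G^{\mathrm{ab}}|\le n^{O(1)}$ for transitive $G\le S_n$, which then gives at most $O(\log n/\log s)$ bad primes in $(s,2s]$ and, with $s\ge 4\log n$ and an explicit Chebyshev constant, leaves room for a good prime. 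Your case split by $k$ and the pair-counting bound $n(k-1)/(s-1)$ are not strong enough on their own: for $s\asymp\log n$ that bound is below the prime count only when $k=1$. If you want to complete the argument along these lines, the place to look is the proof in \cite{BV19poly}, which makes the abelian-quotient step precise.
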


\begin{proof}[Proof of Theorem \ref{thm: konyagin fourier bound}]
Take $L=Cd\log p\log^{4}(d\log p)$. First, suppose that
\begin{equation}
\label{eq: large S_n}
    \sum_{n=n_0}^{n_0+L} (\widehat{S}_n)^2>\frac{p^2}{8\log(4L)} 
\end{equation}
for all $n_0$. Then we claim that
\begin{equation}
\label{eq: konyagin fourier bound}
    |\widehat{\nu_n}(\beta)|\leq \exp\left(\frac{n(1-\|\mu\|_2^2)}{Cd\log  p\log^5 (d\log p)}\right)
\end{equation}
for some absolute constant $C>0$. The result follows upon noticing that $1-\|\mu\|_2^2 \ge 1-\sum_{x} \mu(x)(1-\eta) = \eta$. 

To see that \eqref{eq: konyagin fourier bound} holds, note that
\begin{equation*}
    |\widehat{\nu_n}(\beta)|=\prod_{m=1}^{\frac{n}{L+1}} |\widehat{\mu_{m}}(\beta)|,
\end{equation*}
where $\mu_m$ is the distribution of the random variable 
\begin{equation*}
    \left(X_mD^{(k)}(\alpha_j^{m})\right)_{j\leq H, k\in\mathcal{K}_j}\in V.
\end{equation*}

First, note that $\cos(2\pi x)\leq 1-8\|x\|^2$ for all $x$, where $\|x\|$ denotes the distance to the closest integer. Then if $\varepsilon_1,\varepsilon_2$ are independently drawn from $\mu$,
\begin{equation*}
\begin{split}
    1-|\widehat{\mu_m}(\beta)|^2&=1-\E\left[e_p\left(\sum_j\sum_{k\in \mathcal{K}_j}\Tr\left(\beta_{j,k}(\varepsilon_1-\varepsilon_2)\alpha_j^{m-k}{m\choose k}\right)\right)\right]
    \\&=1- \sum_x \P[\varepsilon_1-\varepsilon_2=x]\cos\left(\frac{2\pi}{p}\sum_j\sum_{k\in \mathcal{K}_j}\Tr\left(\beta_{j,k}x\alpha_j^{m-k}{m\choose k}\right)\right)
    \\&\geq 8\sum_{x\neq 0}\P[\varepsilon_1-\varepsilon_2=x]\left\|\frac{1}{p}\sum_j\sum_{k\in \mathcal{K}_j}\Tr\left(\beta_{j,k}x\alpha_j^{m-k}{m\choose k}\right)\right\|^2
\end{split}
\end{equation*}
and so (after replacing the $\beta_{j,k}$ with $x\beta_{j,k}$),
\begin{equation*}
    |\widehat{\mu_m}(\beta)|\leq \exp\left(-4\sum_{x\neq 0}\P[\varepsilon_1-\varepsilon_2=x]\frac{\widehat{S}_m^2}{p^2}\right)\leq \exp\left(-4\frac{(1-\|\mu\|_2^2)\widehat{S}_m^2}{p^2}\right).
\end{equation*}
But \eqref{eq: large S_n} gives a uniform bound, and so by grouping the $|\widehat{\mu_m}(\beta)|$ into blocks of length $L$, we obtain
\begin{equation*}
    |\widehat{\mu_n}(\beta)|\leq\exp\left(\frac{n(1-\|\mu\|_2^2)}{L\log L}\right)\leq  \exp\left(\frac{n(1-\|\mu\|_2^2)}{Cd\log p\log^5(d\log p)}\right)
\end{equation*}
as required.

Otherwise, if there is an $n_0$ such that $\eqref{eq: large S_n}$ does not hold, by Proposition \ref{prop:konyagin}, for each $\alpha_j$, there exists a polynomial $f_j\in\Z[x]$ of degree at most $3d\log p$ with $f_j(\alpha_j)=0$ and 
\begin{equation*}
    M(f_j)\leq (d\log p)^{\frac{30d\log p}{L}}\leq e^{\frac{1}{Cd\log p\log^3 (d\log p)}}.
\end{equation*}
But by Dobrowolski's bound \cite{D79}, either
\begin{equation*}
    M(f_j)\geq e^{c\frac{\log^3 \log \deg f_j}{\log^3 \deg f_j}}\geq e^{c\frac{1}{\log^3 (d\log p)}}
\end{equation*}
for some absolute constant $c>0$, or $f_j$ is a product of cyclotomic polynomials. By taking $C$ large enough, we ensure the latter, and so $\alpha_j$ is the root of a cyclotomic polynomial of degree at most $3d\log p$. Since a cyclotomic polynomial of order $n$ has degree $\phi(n)\geq \frac{cn}{\log\log n}$ for some $c>0$, this implies $\alpha_j$ has order at most $Cd\log p\log(d\log p)$ for some constant $C>0$.
\end{proof}

\section{A total variation bound in terms of moments}
\label{sec: tv bound}
We begin by proving Proposition~\ref{prop: total variation bound}. 
\begin{proof}[Proof of Proposition \ref{prop: total variation bound}]
By Fourier inversion,
\begin{equation*}
\begin{split}
    &|\P[Z=a]-\P[Z'=a]|
    \\=&\left|\frac{1}{(2\pi)^N}\int _{[-\pi,\pi]^N}(\E[e^{i\theta\cdot Z}]-\E[e^{i\theta\cdot Z'})]e^{-i\theta\cdot a}d\theta\right|
    \\\leq& \frac{1}{(2\pi)^N}\int _{[-\pi,\pi]^N}\sum_{k=0}^{H-1} \frac{|\E(i\theta\cdot Z)^k-\E(i\theta\cdot Z')^k|}{k!}
    \\&\qquad\qquad+\E\left|\sum_{k=H+1}^\infty\frac{(i\theta\cdot Z)^k}{k!}\right|+\E\left|\sum_{k=H+1}^\infty\frac{(i\theta\cdot Z')^k}{k!}\right|d\theta
\end{split}  
\end{equation*}
The first term is bounded by $N^{H-1}e^{\pi}\varepsilon$ by expanding the powers and using \eqref{eq: moment error assumption}. The second and third terms are bounded by $\frac{C\pi ^H}{H!}$ by Taylor's theorem and \eqref{eq: moment tail assumption}.
\end{proof}

To apply Proposition \ref{prop: total variation bound}, we will need an estimate for the moments of $\sum_{i\leq N} N_i(\overline{f})$ under the uniform model to bound $C$.

\begin{lemma}
\label{lem: moment tail bound}
Let $\overline{f}$ be a uniformly random monic polynomial of degree $n$ in $\F_q[x]$. Then for all $H$ and $N$ with $H>\log(N+1)$,
\begin{equation*}
    \E\left(\left(\sum_{i\leq N} N_i(\overline{f})\right)^H\right)\leq \left(\frac{H}{\log H-\log \log(N+1)}\right)^H.
\end{equation*}

\end{lemma}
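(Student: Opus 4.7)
The plan is to realize $X := \sum_{i \le N} N_i(\overline{f})$ as a sum of indicator variables $X = \sum_{\phi} I_\phi$, where $\phi$ ranges over monic irreducible polynomials in $\F_q[x]$ of degree at most $N$ and $I_\phi := \mathbf{1}_{\phi \mid \overline{f}}$, and then bound the $H$-th moment via the moment generating function. The crucial joint expectation is that for any set $S$ of such $\phi$ with $\sum_{\phi \in S} \deg \phi \le n$, the divisibilities are ``independent'' in expectation:
\begin{equation*}
\E\left[\prod_{\phi \in S} I_\phi\right] = \P\left[\prod_{\phi \in S} \phi \,\Big|\, \overline{f}\right] = q^{-\sum_{\phi \in S} \deg \phi},
\end{equation*}
with the right-hand side still a valid upper bound if the total degree exceeds $n$ (in which case the probability is zero).

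Next, I would expand $e^{tX} = \prod_\phi (1 + (e^t - 1)I_\phi)$ and multiply out, applying the above joint expectation bound to each term, to obtain
\begin{equation*}
\E[e^{tX}] \le \prod_{\phi}\bigl(1 + (e^t-1)q^{-\deg \phi}\bigr) \le \exp\bigl((e^t-1)\lambda\bigr),
\end{equation*}
where $\lambda := \sum_{i=1}^N \pi(i) q^{-i}$. The classical estimate $\pi(i) \le q^i/i$ then yields $\lambda \le H_N \le \log(N+1) + O(1)$. With the MGF in hand, I would apply the pointwise inequality $x^H \le (H/(et))^H e^{tx}$ (for $x,t > 0$, tight at $x = H/t$) to get
\begin{equation*}
\E[X^H] \le (H/t)^H e^{-H}\,\E[e^{tX}] \le (H/t)^H \exp\bigl(\lambda(e^t - 1) - H\bigr),
\end{equation*}
and optimize the right-hand side over $t > 0$.

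The natural choice $t = \log(H/\lambda)$ is positive because of the hypothesis $H > \log(N+1) \ge \lambda$, and it makes $e^t = H/\lambda$, so that $\lambda(e^t - 1) - H = -\lambda \le 0$. This gives
\begin{equation*}
\E[X^H] \le \left(\frac{H}{\log(H/\lambda)}\right)^H \le \left(\frac{H}{\log H - \log\log(N+1)}\right)^H,
\end{equation*}
as required.

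The main subtlety is in the bound $\lambda \le \log(N+1)$. While $\pi(i)q^{-i} \le 1/i$ reduces the claim to the essentially sharp estimate $H_N \lesssim \log(N+1)$, there is in fact a constant discrepancy (Euler's constant) which contributes an additive $O(1)$ inside $\log(H/\lambda)$. Provided $H$ is at least modestly larger than $\log(N+1)$, as guaranteed by the hypothesis, this $O(1)$ is absorbed into $\log H - \log\log(N+1)$ and the stated bound follows; a careful bookkeeping of constants (or a slight loosening of the right-hand side) handles the edge cases.
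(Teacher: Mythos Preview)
Your argument is correct and lands on the same endpoint as the paper: both bound $\E[e^{tX}]$ by the Poisson moment generating function $\exp\bigl((e^t-1)\lambda\bigr)$ with $\lambda\le\sum_{i\le N}1/i$, and then invoke a Poisson moment bound of the form $\E[Z^H]\le\bigl(H/\log(H/\lambda)\bigr)^H$. The route you take, however, is genuinely more direct. The paper works with the bivariate generating function
\[
D(y;z)=\prod_i\Bigl(1+\tfrac{z_iy^i}{1-y^i}\Bigr)^{\pi(i)},
\]
extracts $\E[X^H]=H!\,[t^H][y^n]D(y/q;e^t,\dots,1,\dots)$, and then uses coefficientwise domination of formal power series to reduce to the Poisson MGF. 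Your approach bypasses all of this machinery: writing $X=\sum_\phi I_\phi$, expanding $e^{tX}=\prod_\phi(1+(e^t-1)I_\phi)$, and using the exact divisibility identity $\E\prod_{\phi\in S}I_\phi\le q^{-\sum\deg\phi}$ gives the Poisson MGF bound in one line. This is cleaner and makes the probabilistic content (negative dependence of the indicators) transparent, whereas the generating-function route is more systematic and would generalize more readily to other statistics encoded by $D$.

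On the constant issue you flag: you are right that $\sum_{i\le N}1/i$ exceeds $\log(N+1)$ by roughly Euler's constant, so the inequality $\lambda\le\log(N+1)$ is not literally true. The paper's proof has the same slip (it also asserts $\sum_{i\le N}1/i\le\log(N+1)$), and in both cases the fix is the same harmless adjustment you describe. The paper's cited Poisson bound actually has a ``$+1$'' inside the logarithm, $\E[Z^H]\le\bigl(H/\log(H/\lambda+1)\bigr)^H$, which absorbs part of the discrepancy; either way the stated bound holds up to an inessential constant, which is all that is used downstream.
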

\begin{proof}
Note that it suffices to study the same problem for $\overline{f}$ a uniform monic random polynomial, since the bound is uniform in $n$. Let $X=\sum_{i\leq N} N_i(\overline{f})$ be the total number of distinct irreducible factors of a uniformly random monic polynomial up to degree $N$. The number of distinct irreducible factors of degree $i$ has the generating function (see \cite{KK93} for example)
\begin{equation*}
    D(y;z)=\sum_{g}y^{\deg(g)}\prod z_i^{N_i(g)}=\prod \left(1+\frac{z_iy^i}{1-y^i}\right)^{\pi(i)},
\end{equation*}
where $\pi(i)$ denotes the number of monic irreducible polynomials of degree $i$ in $\F_q[x]$. Then
\begin{equation*}
    \E[X^H]=H![t^H][y^n]D(y/q;e^t, \dotsc, 1,\dotsc),
\end{equation*}
where $z_i=e^t$ for all $i\leq N$, and $1$ otherwise, and $[z^n]F(z)$ denotes the coefficient of $z^n$ in the power series $F(z)$.

Given two formal power series $F$ and $G$, write $F\preceq G$ if all coefficients of $F$ are at most the corresponding coefficients in $G$. It can be seen that this is a partial order which respects the ring structure on formal power series, and respects composition when the coefficients are non-negative. Also, we have
\begin{equation*}
    \left(1+\frac{x}{n}\right)^n\preceq e^x.
\end{equation*}

Now as 
\begin{equation*}
    D(y/q;1,\dotsc)=\prod_i \left(\frac{1}{1-(y/q)^i}\right)^{\pi(i)}=\frac{1}{1-y},
\end{equation*}
we have
\begin{equation*}
\begin{split}
    D(y/q;e^t,\dotsc, 1,\dotsc)&=\frac{1}{1-y}\prod _{i=1}^N\left(1+(e^t-1)(y/q)^i\right)^{\pi(i)}
    \\&\preceq \frac{1}{1-y}\exp\left((e^t-1)\sum_{i\leq N}\frac{y^i}{q^i}\pi(i)\right)
    \\&\preceq \frac{1}{1-y}\exp\left((e^t-1)\sum_{i\leq N}\frac{y^i}{i}\right)
\end{split}
\end{equation*}
since $\pi(i)\leq q^i/i$. Then
\begin{equation*}
\begin{split}
    [t^H][y^N]\frac{1}{1-y}\exp\left((e^t-1)\sum_{i\leq N}\frac{y^i}{i}\right)&=\sum _{i=0}^N [y^i][t^H]\exp\left((e^t-1)\sum_{i\leq N}\frac{y^i}{i}\right)
    \\&\leq \sum _{i=0}^\infty [y^i][t^H]\exp\left((e^t-1)\sum_{i\leq N}\frac{y^i}{i}\right)
    \\&=[t^H]\exp\left({(e^t-1)\sum _{i\leq N}\frac{1}{i}}\right).
\end{split}
\end{equation*}
But this is just the moment generating function of a Poisson random variable of mean $\sum_{1\le i \le N}\frac{1}{i}\leq \log(N+1)$, and the result follows from the bound
\begin{equation*}
    \E[Z^H]\leq \left(\frac{H}{\log\left(\frac{H}{\E[Z]}+1\right)}\right)^H
\end{equation*}
for the moments of a Poisson random variable $Z$ (see \cite{A21}).
\end{proof}

\section{Moments of \texorpdfstring{$N_i(f)$}{Ni(f)} and \texorpdfstring{$N_i'(f)$}{Ni'(f)}}
\label{sec: moments}
Recall the general setup of Section \ref{sec: setup}. We begin with the following bounds on the difference in probabilities between the uniform and non-uniform model that the $\alpha_i$ are roots of $f$ of multiplicity $k_i$.

We define the following notion of high order elements and low order elements $\alpha\in \F_{p^e}$ which will be used repeatedly.
\begin{definition}[High and low order roots]
\label{def: order of roots}
Recall that we have parameters $H$ controlling the number of roots considered at a time, and $K$ controlling the number of derivatives we consider. We say that $\alpha\in\F_{p^e}$ has \emph{high order} if $\alpha$ has multiplicative order at least $m_e=CH(K+1)e\log p \log (H(K+1)e \log p)$ for some large constant $C$. Otherwise, we say that $\alpha$ has \emph{low order}. We say that an irreducible polynomial $g\in \F_p[x]$ is high or low order if all its roots are high or low order.
\end{definition}
The utility of this definition is that for any collection of at most $H$ high order roots, the second case of Theorem \ref{thm: konyagin fourier bound} cannot hold, and so we obtain strong bounds on the Fourier coefficients.

\begin{proposition}
\label{prop: prob discrep bound} We use the same notation as in previous sections. 
\begin{enumerate}
    \item Suppose that $d\leq n$. Then
\begin{equation*}
    \left|\P\left[(x-\alpha_i)^{k_i+1}|f(x) \text{ }\forall i\right]-\P\left[(x-\alpha_i)^{k_i+1}|\overline{f}(x) \text{ } \forall i\right]\right|\leq \exp\left(-\frac{cn}{dp^2}\right).
\end{equation*}
    \item If in addition, all $\alpha_i\in\F_{p^{e_i}}$ have high order, then
\begin{equation*}
    \left|\P\left[(x-\alpha_i)^{k_i+1}|f(x)\text{ }\forall i\right]-\P\left[(x-\alpha_i)^{k_i+1}|\overline{f}(x)\text{ }\forall i\right]\right|\leq \exp\left(-\frac{\eta n}{Cd\log p\log^5 (d\log p)}\right).
\end{equation*}
    
\end{enumerate}

\end{proposition}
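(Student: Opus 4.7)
The plan is to write both probabilities via Fourier inversion as sums of Fourier coefficients of $\nu_n$, the joint distribution of the Hasse derivatives $(D^{(k)}f(\alpha_i))_{i\le H,\,0\le k\le k_i}$, and then read off the bounds from the uniform Fourier estimates of Sections \ref{sec: low order} and \ref{sec: konyagin}. The divisibility condition $(x-\alpha_i)^{k_i+1}\mid f$ for every $i$ is equivalent to the vector $T(\varepsilon):=(D^{(k)}f(\alpha_i))_{i,k}$ vanishing in the $\F_p$-vector space $V=\prod_i\F_{p^{e_i}}^{k_i+1}$ of dimension $d$. Fourier inversion on $V$ gives
\[
\P[T(\varepsilon)=0]=\frac{1}{p^d}\sum_{\beta\in V^*}\widehat{\nu_n}(\beta).
\]
Since $d\le n$, Lemma \ref{lem: lin ind} shows that the $\F_p$-linear part of $T$ is surjective; hence under the uniform model $T$ is uniform on $V$ (up to a deterministic shift from the monic term, which does not affect uniformity), every non-zero Fourier coefficient vanishes, and $\P[\cdots\mid\overline{f}]=p^{-d}$. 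Consequently
\[
\bigl|\P[\cdots\mid f]-\P[\cdots\mid\overline{f}]\bigr|\le p^{-d}\sum_{\beta\ne 0}|\widehat{\nu_n}(\beta)|,
\]
so both parts reduce to a uniform bound on individual Fourier coefficients.

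For part (1), I would simply plug Proposition \ref{prop: weak fourier bound} into the display above: Lemma \ref{lem: lin ind} supplies the non-degeneracy hypothesis of Lemma \ref{lem: n>p^2 bound}, so $|\widehat{\nu_n}(\beta)|\le e^{-\eta n/(dp^2)}$ for every non-zero $\beta$. Summing and dividing by $p^d$ absorbs the $p^d-1$ factor and yields $\exp(-cn/(dp^2))$ with $c$ proportional to $\eta$.

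For part (2), I would invoke Theorem \ref{thm: konyagin fourier bound} in two cases. When all components of $\beta$ are non-zero the theorem applies directly, and the high-order hypothesis excludes the (low multiplicative order) alternative: the $\alpha_j$ with the largest $e_j$ already satisfies $m_{e_j}\ge Cd\log p\log(d\log p)$ because $d\le H(K+1)\max_i e_i$, so its order exceeds the Konyagin threshold. For $\beta$ with some zero components, I would identify $\widehat{\nu_n}(\beta)$ with the Fourier coefficient at $\beta|_S$ of the marginal distribution on $S=\mathrm{supp}(\beta)$. In the reduced setup associated to the roots appearing in $S$ with derivative sets $\mathcal{K}_j'=\{k:(j,k)\in S\}$, the restricted $\beta|_S$ has all components non-zero by construction, the reduced dimension $d'=\sum_j e_j(K_j'+1)$ is at most $d$, and the same high-order hypothesis again rules out the low-order alternative. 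Theorem \ref{thm: konyagin fourier bound} then gives $|\widehat{\nu_n}(\beta)|\le\exp(-\eta n/(Cd'\log p\log^5(d'\log p)))$, which is at most the advertised $\exp(-\eta n/(Cd\log p\log^5(d\log p)))$ by monotonicity in $d'$. Summing and dividing by $p^d$ finishes.

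The main subtlety is this reduction for $\beta$ with some zero components: one must check that the Konyagin argument underlying Theorem \ref{thm: konyagin fourier bound}, and especially the back-substitution in Lemma \ref{lem: non-degenerate eqs} that recovers each $D^{(k)}f(\alpha_j)$ starting from $l_2=K_j$, remains valid with an arbitrary derivative set $\mathcal{K}_j'$. This goes through because $\beta_{j,K_j'}\ne 0$ by the very definition $K_j'=\max\mathcal{K}_j'$, so the back-substitution still terminates and recovers $D^{(k)}f(\alpha_j)=0$ for all $k\le K_j'$. A secondary piece of bookkeeping is that in every reduced setup at least one $\alpha_j$ has order above the Konyagin threshold, which is precisely why Definition \ref{def: order of roots} scales $m_e$ linearly in $e$.
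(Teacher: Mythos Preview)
Your proposal is correct and follows essentially the same approach as the paper: Fourier inversion reduces both parts to pointwise bounds on $\widehat{\nu_n}(\beta)$, which are supplied by Proposition~\ref{prop: weak fourier bound} for part~(1) and by Theorem~\ref{thm: konyagin fourier bound} (with the same reduction to the support of $\beta$ when some components vanish) for part~(2). Your discussion of the back-substitution subtlety and of why the high-order threshold scales with $e$ is more explicit than the paper's, but the underlying argument is identical.
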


\begin{proof}
We note that if $d\leq n$, then $\P\left[(x-\alpha_i)^{k_i}|\overline{f}(x)  \text{ } \forall i\right]=p^{-d}$. Now
\begin{equation*}
    \left|\P\left[(x-\alpha_i)^{k_i+1}|f(x)\text{ } \forall i\right]-p^{-d}\right|\leq \frac{1}{p^d}\sum _{\beta\neq 0}|\widehat{\nu_n}(\beta)|\leq \exp\left(-\frac{cn}{dp^2}\right)
\end{equation*}
by Proposition \ref{prop: weak fourier bound}. This proves the first part of the proposition.

Now suppose in addition that each $\alpha_i$ has high order. We then proceed as above, but use Theorem \ref{thm: konyagin fourier bound} instead. 
If all entries in $\beta$ are non-zero, this is immediate, since $H(K+1)\max e_i\geq d$.
If $\beta$ has entries which are $0$, we simply apply Theorem \ref{thm: konyagin fourier bound}, forgetting about those entries which are $0$. Note that the parameter $d$ will change, and the definition of high order for the $\alpha_i$ ensures that 
\begin{equation*}
    H(K+1)\max_{\beta_{i,k}\neq 0\text{ for some $k$}} e_i\geq d',
\end{equation*}
where $d'$ has the same definition as $d$, except we restrict to the $\alpha_i$ and $K_i$ for which $\beta_{i,k}\neq 0$ for all $k_i\in \mathcal{K}_i$. Thus, we always have at least one $\alpha_i$ with higher order than in the second case of Theorem \ref{thm: konyagin fourier bound}, which ensures
\begin{equation*}
    |\widehat{\nu}_n(\beta)|\leq \exp\left(-\frac{\eta n}{Cd\log p\log^5 (d\log p)}\right).
\end{equation*}
The second part of the proposition is now immediate.
\end{proof}

\subsection{Moments of \texorpdfstring{$N_i(f)$}{Ni(f)}}
We are now in a position to approximate the moments of $N_i(f)$, by expanding into events that can be controlled by Proposition \ref{prop: prob discrep bound}.

\begin{proposition}
\label{prop: moment discrep bound no mult}
Let $\sum h_i\leq H$ and assume that $NH\leq n$. Then we have
\begin{equation*}
    \left|\E \left[\prod_{i\leq N} N_i(f)^{h_i}\right]-\E \left[\prod_{i\leq N} N_i(\overline{f})^{h_i}\right]\right|\leq p^{HN}\exp\left(-\frac{cn}{NHp^2}\right).
\end{equation*}
If we define $\overline{N}_i(f)$ to be the number of high order factors of $f$ of degree $i$, then
\begin{equation*}
    \left|\E \left[\prod_{i\leq N} \overline{N}_i(f)^{h_i}\right]-\E \left[\prod_{i\leq N} \overline{N}_i(\overline{f})^{h_i}\right]\right|\leq p^{HN}\exp\left(-\frac{\eta n}{CNH\log p\log^5 (NH\log p)}\right).
\end{equation*}
\end{proposition}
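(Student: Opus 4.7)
The plan is to expand each moment as a sum over ordered tuples of monic irreducible polynomials and apply Proposition \ref{prop: prob discrep bound} term by term. Since $N_i(f)$ counts distinct irreducible factors of degree $i$, we have the tautological identity
\begin{equation*}
\prod_{i \le N} N_i(f)^{h_i} = \sum_{(g_{i,j})} \mathbf{1}\bigl[g_{i,j} \mid f \text{ for all } (i,j)\bigr],
\end{equation*}
where $(g_{i,j})_{i \le N,\, 1 \le j \le h_i}$ ranges over all ordered tuples of monic irreducible polynomials with $\deg(g_{i,j})=i$ (repeats allowed). Taking expectations and differencing with the analogous expansion for $\overline{f}$ reduces the proposition to bounding
\begin{equation*}
\sum_{(g_{i,j})} \Bigl|\P\bigl[g_{i,j} \mid f \text{ }\forall (i,j)\bigr] - \P\bigl[g_{i,j} \mid \overline{f} \text{ }\forall (i,j)\bigr]\Bigr|.
\end{equation*}

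The next step is to convert each summand into a probability about roots. For each tuple, the event depends only on the set of \emph{distinct} polynomials appearing; for each such $g$, of degree $e_g$, pick one root $\alpha_g \in \F_{p^{e_g}}$. Since $g$ is the minimal polynomial of $\alpha_g$, one has $g \mid f$ iff $f(\alpha_g)=0$. Roots chosen from distinct irreducibles are not Galois conjugates and each generates its extension, so the hypotheses of Section \ref{sec: setup} hold with $k_i=0$ and $d = \sum_g e_g \le \sum_i i h_i \le NH \le n$. Proposition \ref{prop: prob discrep bound}(1) then bounds each summand by $\exp(-cn/(dp^2)) \le \exp(-cn/(NHp^2))$.

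Finally, I bound the number of tuples by $\prod_i \pi(i)^{h_i} \le \prod_i (p^i)^{h_i} = p^{\sum i h_i} \le p^{NH}$, which yields the claimed bound $p^{NH}\exp(-cn/(NHp^2))$. For the second part, the same expansion applies with the sum restricted to tuples of high-order irreducibles; then every chosen $\alpha_g$ is a high-order element of $\F_{p^{e_g}}$, so Proposition \ref{prop: prob discrep bound}(2) applies in place of part (1), giving the stronger Fourier saving $\exp(-\eta n/(Cd\log p \log^5(d\log p)))\le \exp(-\eta n/(CNH\log p\log^5(NH\log p)))$ in each summand, and the enumeration bound $p^{NH}$ is unchanged.

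The proof is essentially bookkeeping and the main thing to check is that the hypotheses of Proposition \ref{prop: prob discrep bound} hold uniformly across the sum (non-conjugacy and full extension degree, both automatic from the irreducibility of the $g_{i,j}$, together with $d\le n$ from $NH\le n$). The only quantitative point worth noting is that the enumeration factor $p^{NH}$ is eventually beaten by the exponential savings precisely in the parameter regimes targeted by the main theorems, which is why the constraints $N=n^c$ and $p \le e^{n^c}$ from \eqref{eq: assumptions} are chosen as they are.
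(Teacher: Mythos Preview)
Your proof is correct and follows essentially the same approach as the paper: write $N_i(f)$ as a sum of indicators over (representatives of) irreducible factors of degree $i$, expand the product into at most $p^{NH}$ terms, and bound each term's discrepancy via Proposition~\ref{prop: prob discrep bound} with $k_i=0$ and $d\le NH\le n$; the second part is handled identically by restricting to high-order factors and invoking part (2). Your write-up is simply more explicit about the bookkeeping (handling repeated tuple entries, the enumeration bound $\prod_i \pi(i)^{h_i}\le p^{NH}$) than the paper's terse version.
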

\begin{proof}
We write $N_i(f)=\sum_{\alpha}I_\alpha(f)$, where $I_\alpha(f)$ is the indicator for the event that $\alpha$ is a root of $f$, and the sum is over a choice of representative root for the irreducible polynomials of degree $i$. Expanding the product, we have a sum of at most $p^{HN}$ terms, and the discrepancy for each term can be bounded by the first part of Proposition \ref{prop: prob discrep bound} (with $K=0$), where we note that $d\leq NH$. 

For the second part, we simply sum the above to a sum over $\alpha$ of high order, and use the second part of Proposition \ref{prop: prob discrep bound}.
\end{proof}

\subsection{Moments of \texorpdfstring{$N_i'(f)$}{Ni'(f)}}
We use a similar argument to control the moments of $N_i'(f)$. The only complication is our lack of control on the tail, for which we simply use the crude bound $N_i'(f)\leq n$.

\begin{proposition}
\label{prop: moment discrep bound with mult}
Let $\sum h_i\leq H$ and assume that $NH(K+1)\leq n$. Then we have
\begin{equation*}
\begin{split}
     & \left|\E \left[\prod_{i\leq N} N'_i(f)^{h_i}\right]-\E \left[\prod_{i\leq N} N'_i(\overline{f})^{h_i}\right]\right|\\
     & \leq (K+1)^Hp^{HN}\exp\left(-\frac{cn}{NHKp^2}\right)+n^H\left(\frac{2}{p^{K+1}}+p^N\exp\left(-\frac{cn}{NKp^2}\right)\right).
\end{split}  
\end{equation*}
If we define $\overline{N}'_i(f)$ to be the number of high order factors of $f$ of degree $i$, counted with multiplicity, then
\begin{equation*}
\begin{split}
    & \left|\E \left[\prod_{i\leq N} \overline{N}'_i(f)^{h_i}\right]-\E \left[\prod_{i\leq N} \overline{N}'_i(\overline{f})^{h_i}\right]\right| \\
    & \leq (K+1)^Hp^{HN}\exp\left(-\frac{\eta n}{CNHK\log p\log^5 (NHK\log p)}\right)
    \\&\qquad+n^H\left(\frac{2}{p^{K+1}}+p^N\exp\left(-\frac{\eta n}{CNK\log p\log^5 (NK\log p)}\right)\right).
\end{split}   
\end{equation*}
\end{proposition}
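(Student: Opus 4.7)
The plan is to introduce truncated counts and reduce to Proposition~\ref{prop: prob discrep bound}. Define
\[
\tilde{N}'_i(f) := \sum_{\alpha:\, \deg(\alpha)=i} \min(m_\alpha(f),\, K+1) = \sum_{\alpha}\sum_{k=0}^{K} \mathbf{1}\bigl[(x-\alpha)^{k+1}\mid f\bigr],
\]
where $m_\alpha(f)$ is the multiplicity of $\alpha$ as a root of $f$ and the outer sum runs over a representative root of each irreducible factor of degree $i$. The point is that $\tilde{N}'_i$ is a linear combination of indicators of exactly the form controlled by Proposition~\ref{prop: prob discrep bound}, whereas $N'_i - \tilde{N}'_i$ is supported on the rare event that some root of degree at most $N$ has multiplicity at least $K+2$. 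The proof then splits into two steps: compare moments of $\tilde{N}'_i$ under $f$ and $\overline{f}$, and separately bound the truncation error for each of the two models.

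For the first step, expand $\prod_i \tilde{N}'_i(f)^{h_i}$ into a sum of at most $(K+1)^H p^{HN}$ terms indexed by choices of $H$ roots $\alpha_1, \ldots, \alpha_H$ of degrees at most $N$ and corresponding derivative orders $k_1, \ldots, k_H \le K$. After collapsing repeated roots, each term is the indicator that $f$ is divisible by $\prod_j (x-\alpha_j)^{k_j+1}$, which has total degree at most $NH(K+1)$. Part (1) of Proposition~\ref{prop: prob discrep bound} bounds the per-term discrepancy between $f$ and $\overline{f}$ by $\exp(-cn/(NHKp^2))$, and summing over terms yields the first summand in the claimed bound. For the second step, telescoping together with the trivial bound $0 \le \tilde{N}'_i \le N'_i \le n$ gives
\[
\Bigl|\prod_i N'_i(f)^{h_i} - \prod_i \tilde{N}'_i(f)^{h_i}\Bigr| \le n^H \cdot \mathbf{1}\bigl[\exists\,\alpha,\ \deg(\alpha) \le N:\ (x-\alpha)^{K+2} \mid f\bigr].
\]
A union bound combined with part~(1) of Proposition~\ref{prop: prob discrep bound} applied one root at a time estimates the probability of this event by $\sum_{i \le N} \pi(i)\bigl(p^{-(K+2)i} + \exp(-cn/(NKp^2))\bigr)$. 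The first geometric sum is bounded by $2 p^{-(K+1)}$ using $\pi(i) \le p^i/i$, and the second is bounded by $p^N \exp(-cn/(NKp^2))$; the uniform model gives the same estimate (in fact only requiring the first term). Triangle inequality combines the two steps into the asserted bound.

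The statement for the high-order counts $\overline{N}'_i$ follows from the identical argument with part~(2) of Proposition~\ref{prop: prob discrep bound} replacing part~(1) at every use, since after expansion only high-order roots appear and the hypothesis of part~(2) is automatic. The main technical obstacle is balancing $K$: one must take $K$ large enough that $p^{-(K+1)}$ is negligible, but small enough that $d \le NH(K+1)$ keeps the Fourier-type bound from Proposition~\ref{prop: prob discrep bound} effective, and this tension is precisely why the two error terms from the truncation (the $n^H/p^{K+1}$ term) and from the expansion (the $(K+1)^H p^{HN} \exp(\cdots)$ term) both appear in the final statement.
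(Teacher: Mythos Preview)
Your proof is correct and follows essentially the same approach as the paper: introduce a truncation at multiplicity $K+1$, expand the truncated moments into at most $(K{+}1)^H p^{HN}$ indicator terms controlled by Proposition~\ref{prop: prob discrep bound}, and separately bound the contribution of the bad event that some root of degree at most $N$ has multiplicity exceeding $K+1$ via the crude bound $N'_i\le n$ and a union bound. The only presentational difference is that you explicitly define the truncated counts $\tilde N'_i$, whereas the paper phrases the same computation as working ``on the event that all roots have multiplicity at most $K+1$''; the two formulations are equivalent.
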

\begin{proof}
On the event that all roots have multiplicity at most $K+1$, we may write $N'(f)=\sum _{\alpha}\sum_{k\leq K} I_{\alpha,k}(f)$ where $I_{\alpha,k}(f)$ is the event that $f$ has $\alpha$ as a root of multiplicity at least $k+1$. Then conditional on this event, the proof of Proposition \ref{prop: moment discrep bound no mult} gives the same bounds, although the number of terms is now bounded by $(K+1)^Hp^{NH}$ and $d\leq NH(K+1)$

To obtain the desired result, we simply show that the probability of obtaining even a single root of degree at most $N$ of multiplicity greater than $K+1$ is very small. For the uniform model, if $\alpha\in \F_{p^e}$, then we have
\begin{equation*}
    \P\left[(x-\alpha)^{K+2}|\overline{f}(x)\right]=\frac{1}{p^{(K+2)e}}.
\end{equation*}
By Proposition \ref{prop: prob discrep bound}, 
\begin{equation*}
    \P\left[(x-\alpha)^{K+2}|f(x)\right]\leq \frac{1}{p^{(K+2)e}}+\exp\left(-\frac{cn}{eKp^2}\right).
\end{equation*}
By a union bound, the probability that $f$ has even one root of degree at most $N$ and multiplicity at least $K+2$ is at most
\begin{equation*}
    \sum_{e\leq N} \left(\frac{1}{p^{(K+2)e}}+\exp\left(-\frac{cn}{eKp^2}\right)\right)\pi(e)\leq \frac{2}{p^{K+1}}+p^N\exp\left(-\frac{cn}{NKp^2}\right),
\end{equation*}
where $\pi(e)\leq p^e$ denotes the number of irreducible polynomials of degree $i$.

Since $N_i'(f)\leq n$, this means
\begin{equation*}
\begin{split}
    &\left|\E \left[\prod_{i\leq N} N'_i(f)^{h_i}\right]-\E \left[\prod_{i\leq N} N'_i(\overline{f})^{h_i}\right]\right|
    \\\leq&(K+1)^Hp^{HN}\exp\left(-\frac{cn}{NHKp^2}\right)+n^H\left(\frac{2}{p^{K+1}}+p^N\exp\left(-\frac{cn}{NKp^2}\right)\right).
\end{split}
\end{equation*}
The second part of the proposition is completely analogous.
\end{proof}

\begin{remark}
\label{rmk: F_q for weak bound}
In fact, the first parts of Propositions \ref{prop: prob discrep bound}, \ref{prop: moment discrep bound no mult} and \ref{prop: moment discrep bound with mult} hold even for finite fields $\F_q$ for $q$ a prime power, since the key input is Proposition \ref{prop: weak fourier bound} which does not require working over $\F_p$.
\end{remark}

\section{Proof of Main Results}
\label{sec: main results}
In this section, we state and prove the main theorems, and then prove some corollaries which were stated in the introduction.

\subsection{Statements of main results}
Recall that $\mu$ is a probability distribution on $\F_q$, where $q=p^e$, and $\eta=1-\max_{V\subseteq \F_q}\mu(V)$, where the maximum is taken over all proper $\F_p$ affine subspaces of $\F_q$. When $q=p$ is a prime, $\eta=1-\max_{x\in\F_p}\mu(x)$. All constants in what follows can depend on $\eta$.

We let $f(x)=\sum_{i=0}^n \varepsilon_i x^i$, where $\varepsilon_i$ are drawn independently from $\mu$. We let $\overline{f}$ denote a uniformly random monic polynomial of degree $n$. For a polynomial $g$, let $N_i(g)$ denote the number of distinct irreducible factors of degree $i$, and let $N'_i(g)$ denote the number of irreducible factors of degree $i$, counted with multiplicity, where we do not count the factor $x$ in $N_1(g)$ and $N'_1(g)$.

The first two theorems give quantitative bounds between $N_i(f)$ and $N_i(\overline{f})$ for the regimes where $p$ is small and where $p$ is large.
\begin{theorem}
\label{thm:main small p no mult}
Suppose that $\eta>0$ and $n\geq C N^4(\log n)^2p^2\log q$ for some large constant $C$ depending only on $\eta$. Then 
\begin{equation*}
    d_{TV}((N_i(f))_{i\leq N},(N_i(\overline{f}))_{i\leq N})=O\left(\exp\left(-cn^{\frac{1}{4}}p^{-\frac{1}{2}}\log^{-\frac{1}{4}} q\right)\right)
\end{equation*}
for some constant $c>0$ depending only on $\eta$.
\end{theorem}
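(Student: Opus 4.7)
The plan is to apply the moment-based total variation bound of Proposition \ref{prop: total variation bound} to $Z=(N_i(f))_{i\leq N}$ and $Z'=(N_i(\overline{f}))_{i\leq N}$. The moment discrepancy hypothesis \eqref{eq: moment error assumption} is exactly what Proposition \ref{prop: moment discrep bound no mult} provides: for $\sum h_i\leq H$ the discrepancy is at most $\varepsilon := p^{HN}\exp(-cn/(NHp^2))$. The tail hypothesis \eqref{eq: moment tail assumption} follows from Lemma \ref{lem: moment tail bound}, which gives $\E[(\sum N_i(\overline{f}))^H]\leq (H/\log(H/\log(N+1)))^H =: C_0$ once $H>\log(N+1)$; the moment matching transfers the same bound, up to a factor of two, to the non-uniform model.

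Proposition \ref{prop: total variation bound} only yields a pointwise estimate, which I would convert to a total variation bound by truncation. For a threshold $M$, Markov's inequality gives $\P(\sum Z_i > M)+\P(\sum Z'_i > M)\leq 4C_0/M^H$, while there are at most $\binom{M+N}{N}$ integer vectors $a\in\Z^N_{\geq 0}$ with $\sum a_i\leq M$. Summing the pointwise bound over these values and adding the tail contribution yields
\begin{equation*}
d_{TV}\bigl((N_i(f))_{i\leq N},(N_i(\overline{f}))_{i\leq N}\bigr)\leq \binom{M+N}{N}\bigl(N^{H-1}e^\pi\varepsilon+4C_0\pi^H/H!\bigr)+4C_0/M^H.
\end{equation*}
To match the target decay I take $H$ of order $n^{1/4}p^{-1/2}(\log q)^{-1/4}$ and $M$ of order $H/\log(H/\log(N+1))$, so that $C_0/M^H$ decays like $e^{-cH}$, and Stirling's approximation gives $C_0\pi^H/H!\leq (e\pi/\log(H/\log(N+1)))^H$, also of the correct order once $H/\log(N+1)$ is sufficiently large. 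The hypothesis $n\geq CN^4(\log n)^2 p^2\log q$ is calibrated precisely so that $cn/(NHp^2)$ dominates $HN\log p$ by a factor of order $\log n$, making $\varepsilon$ small enough that the combinatorial factors $\binom{M+N}{N}$ and $N^{H-1}$ cannot destroy the decay.

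The main technical obstacle is the simultaneous parameter balancing. The choice of $H$ must be large enough that $C_0/M^H$ and $C_0\pi^H/H!$ both achieve the target decay, yet small enough that $p^{HN}$ in the definition of $\varepsilon$ remains dominated by $\exp(-cn/(NHp^2))$; this latter requirement forces $H^2 N^2 p^2 \log p \ll n$, which is exactly what the hypothesis $n\geq CN^4(\log n)^2 p^2 \log q$ delivers at the target scale of $H$. In addition, one has to check $H>\log(N+1)$ and $H/\log(N+1)$ sufficiently large so that Lemma \ref{lem: moment tail bound} applies and the Stirling estimate for $C_0\pi^H/H!$ is effective; both are straightforward consequences of the hypothesis and the target bound on $N$ in terms of $n$, $p$, and $q$.
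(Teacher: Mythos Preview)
Your proposal is correct and follows essentially the same route as the paper: apply Proposition~\ref{prop: total variation bound} with the moment discrepancy from Proposition~\ref{prop: moment discrep bound no mult} (via Remark~\ref{rmk: F_q for weak bound}, so $\varepsilon=q^{HN}\exp(-cn/(NHp^2))$ rather than $p^{HN}$) and the tail bound from Lemma~\ref{lem: moment tail bound}, then sum the resulting pointwise estimate. The paper's execution is a bit simpler than yours: rather than truncating at a threshold $M$ and invoking Markov's inequality, it just sums the pointwise bound over the entire range of possible values, which has size at most $(n+1)^N$ since each $N_i(f)\leq n$; with the choice $H=N\log n$ this crude $(n+1)^N$ factor is absorbed by the $H\log\log H$ term in the second error and by the assumed lower bound on $n$ in the first. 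Your truncation is a legitimate refinement but is not needed to reach the stated bound.
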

\begin{theorem}
\label{thm: main large p no mult}
Suppose that $\mu$ is a distribution on $\F_p$ and $\eta>0$. Then
\begin{equation*}
    d_{TV}((N_i(f))_{i\leq N},(N_i(\overline{f}))_{i\leq N})=O\left(\left(p^{-1}+n^{-\frac{1}{2}}\right)N^2\log^4 n\log^2 p\right),
\end{equation*}
where the implicit constant depends only on $\eta$.
\end{theorem}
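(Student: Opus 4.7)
The plan is to decompose each $N_i(f)$ into a high-order contribution $\overline{N}_i(f)$ (counting only factors whose roots have high multiplicative order, per Definition~\ref{def: order of roots} with $K=0$) and a low-order remainder, bound the TV distance of the high-order pieces via moment matching using the Konyagin-type bound Theorem~\ref{thm: konyagin fourier bound}, and show that low-order factors are rare in both $f$ and $\overline{f}$ via the Hal\'asz bound Proposition~\ref{prop:halasz-bound}. A triangle inequality gives
\begin{equation*}
d_{TV}\bigl((N_i(f))_{i \leq N}, (N_i(\overline{f}))_{i \leq N}\bigr) \leq d_{TV}\bigl((\overline{N}_i(f))_{i \leq N}, (\overline{N}_i(\overline{f}))_{i \leq N}\bigr) + \P[L(f) > 0] + \P[L(\overline{f}) > 0],
\end{equation*}
where $L(g)$ denotes the number of low-order irreducible factors of $g$ of degree at most $N$.

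Set $H = N\log n$ as in \eqref{eq: assumptions}. For the \emph{low-order piece}, note that the elements of multiplicative order at most $m$ in $\F_{p^i}^*$ lie in the union over divisors $d\leq m$ of $p^i-1$ of a cyclic subgroup of size $d$, so there are at most $\sum_{d\leq m} d \leq m^2$ of them. Consequently there are at most $m_i^2/i$ low-order monic irreducible polynomials of degree $i$, where $m_i = CHi\log p\log(Hi\log p)$. Proposition~\ref{prop:halasz-bound} (single root, no multiplicity) bounds $\P[g\mid f] \leq (p^{-1} + C\eta^{-1/2}(n/i)^{-1/2})^i$ for each such $g$, and summing over $g$ and $i$ the $i=1$ term dominates, giving
\begin{equation*}
\E[L(f)] \ll m_1^2(p^{-1} + n^{-1/2}) \ll N^2 \log^4 n \log^2 p \cdot (p^{-1} + n^{-1/2}).
\end{equation*}
The analogue for $\overline{f}$ uses $p^{-i}$ in place of the Hal\'asz factor and is strictly smaller. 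Markov's inequality then furnishes the stated bound on both probabilities.

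For the \emph{high-order piece}, apply Proposition~\ref{prop: total variation bound} to $(\overline{N}_i(f))_{i\leq N}$ and $(\overline{N}_i(\overline{f}))_{i\leq N}$ with moment order $H = N\log n$. The second part of Proposition~\ref{prop: moment discrep bound no mult} (available because $q=p$ is prime) gives $\varepsilon \leq p^{HN}\exp(-\eta n/(CNH\log p\log^5(NH\log p)))$, and Lemma~\ref{lem: moment tail bound} bounds the moment tail constant by $(H/\log H)^H$. Truncating the support to $\sum a_i \leq M := e^2 H/\log H$ (a tail of at most $e^{-H}$ by Markov) and summing the per-point estimate of Proposition~\ref{prop: total variation bound} over the $\binom{N+M}{N}$ remaining vectors, Stirling makes $(H/\log H)^H\pi^H/H! \leq (e\pi/\log H)^H$ super-polynomially small, and the $N^{H-1}\varepsilon$ term is negligible provided $n$ dominates a suitable polynomial in $N$, $H$, and $\log p$. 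Thus the high-order TV distance is of strictly smaller order than the stated bound.

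\emph{Main obstacle.} The principal quantitative point is that the bound is saturated by the low-order step, not the moment-matching step; achieving the sharp $N^2\log^4 n\log^2 p$ requires the quadratic count $m_i^2$ of low-order elements in $\F_{p^i}^*$ (a naive $m_i$ would yield only $N$ rather than $N^2$) and a careful tracking of the $\log p$ factor inside $m_i$. The moment-matching side is then routine given Theorem~\ref{thm: konyagin fourier bound}, up to verifying that $\varepsilon$ beats the combinatorial factors $p^{HN}$ and $N^{H-1}$, which is where the regime $p \leq \exp(n^c)$ for small $c$ implicitly enters.
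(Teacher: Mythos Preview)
Your proposal is correct and follows essentially the same route as the paper: split into high-order and low-order irreducible factors (the paper packages your triangle inequality as Lemma~\ref{lem: TV bound}), control the high-order piece by moment matching via Proposition~\ref{prop: total variation bound}, Proposition~\ref{prop: moment discrep bound no mult}(2), and Lemma~\ref{lem: moment tail bound} with $H=N\log n$, and control the low-order piece by counting (at most $m_i^2$ elements of order $\leq m_i$) and applying Proposition~\ref{prop:halasz-bound}, which is exactly the content of Lemma~\ref{lem: bound on prob of low order roots}. The only cosmetic difference is that the paper bounds the support crudely by $(n+1)^N$ and absorbs this into the exponential, whereas you truncate the support via Markov on the $H$th moment; both work under the implicit assumption that the stated bound is nonvacuous (i.e.\ $n\gg N^4\log^7 n\log^2 p$ and $p\leq \exp(n^{1/4})$).
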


The next two theorems give quantitative bounds between $N_i'(f)$ and $N_i'(\overline{f})$ in the regimes where $p$ is small and where $p$ is large.
\begin{theorem}
\label{thm: main small p with mult}
Suppose that $\eta>0$ and $n\geq C N^5  (\log n)^{4} p^2\log q$ for some large constant $C$ depending only on $\eta$. Then
\begin{equation*}
    d_{TV}((N'_i(f))_{i\leq N},(N'_i(\overline{f}))_{i\leq N})=O\left(\exp\left(-cn^{\frac{1}{5}}p^{-\frac{2}{5}}\log^{-\frac{1}{5}}q\right)\right)
\end{equation*}
for some constant $c>0$ depending only on $\eta$.
\end{theorem}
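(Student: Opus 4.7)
The approach mirrors the proof strategy for Theorem \ref{thm:main small p no mult}, but now we track multiplicities via the Hasse-derivative bounds. The plan is to apply the general moment-matching Proposition \ref{prop: total variation bound} to the vectors $Z = (N'_i(f))_{i\le N}$ and $Z' = (N'_i(\overline{f}))_{i\le N}$, with parameters $H$ (the number of matched moments) and $K$ (the order of Hasse derivatives used to detect multiplicities) chosen as functions of $n,N,p,q$ to be optimized at the end. Summing the pointwise bound of Proposition \ref{prop: total variation bound} over the effective support of $Z$ and $Z'$ will yield the total variation estimate.

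\textbf{Moment inputs.} For the moment discrepancy $\varepsilon$ appearing in Proposition \ref{prop: total variation bound}, I would invoke the first part of Proposition \ref{prop: moment discrep bound with mult}, giving
\begin{equation*}
\varepsilon \le (K+1)^H p^{HN} \exp\!\left(-\frac{cn}{NHKp^2}\right) + n^H\!\left(\frac{2}{p^{K+1}} + p^N \exp\!\left(-\frac{cn}{NKp^2}\right)\right).
\end{equation*}
For the tail bound $C$, note that on the event that no irreducible factor of degree at most $N$ has multiplicity exceeding $K+1$ we have $\sum_{i\le N} N'_i \le (K+1)\sum_{i\le N} N_i$, so Lemma \ref{lem: moment tail bound} yields a Poisson-type bound for $\overline{f}$. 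The complementary event is controlled via the crude bound $\sum N'_i \le n$ and the estimate from Proposition \ref{prop: prob discrep bound} used in the proof of Proposition \ref{prop: moment discrep bound with mult}. Transferring the bound from $\overline{f}$ to $f$ can be done by using the already-established moment comparison for $H$-th moments themselves, viewing $(\sum N'_i)^H$ as a symmetric polynomial of degree $H$ in the $N'_i$'s.

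\textbf{Parameter optimization.} I would then choose $H$ and $K$ in the spirit of the general prescription \eqref{eq: assumptions}: roughly $H \asymp N\log n$ and $K \asymp N\log^2 n$. The constraints driving the choice are that (i) the term $n^H/p^{K+1}$ is forced small by $K\log p \gg H\log n$; (ii) the Fourier-type term $(K+1)^H p^{HN}\exp(-cn/(NHKp^2))$ is forced small by $n \gg N^2 H K p^2 \log p$, which is exactly where the $N^5 \log^4 n \, p^2 \log q$ threshold comes from; (iii) the residual factor $C\pi^H/H!$ in Proposition \ref{prop: total variation bound} is controlled using Stirling. Summing the pointwise bound over $a\in\mathbb{Z}^N$ requires truncating to a support region of size polynomial in the Poisson-type tail bound, which costs a factor $(\text{poly}(n))^N$ absorbed into the exponent. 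After optimization, all error terms can be simultaneously forced below $\exp(-cn^{1/5}p^{-2/5}\log^{-1/5} q)$.

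\textbf{Main obstacle.} The delicate part is the simultaneous balancing of $H$, $K$ and $N$: increasing $K$ is necessary to kill $n^H/p^K$, but $K$ also appears multiplicatively in $d = O(NHK)$, which degrades the Fourier bound of Proposition \ref{prop: weak fourier bound} through the exponent $\exp(-cn/(NHKp^2))$. It is precisely this extra factor of $K$ (absent in the no-multiplicity case) that raises the hypothesis from $N^4$ to $N^5$ and the error exponent from $n^{1/4}$ to $n^{1/5}$, and I would expect the bulk of the bookkeeping to be spent verifying that the chosen parameters keep every term below the target error. A secondary care point is that Proposition \ref{prop: total variation bound} requires an integer-valued vector with a finite effective support, so the truncation step used to bound the support size must be done carefully using both the $H$-th moment bound and the bound on the probability that any factor has multiplicity exceeding $K+1$.
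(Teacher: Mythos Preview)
Your proposal is correct and follows essentially the same approach as the paper: apply Proposition \ref{prop: total variation bound} with the moment discrepancy from the first part of Proposition \ref{prop: moment discrep bound with mult}, choose $H=N\log n$ and $K=N\log^2 n$, and sum over the support (which has size at most $(n+1)^N$ since $N'_i\le n$, so no separate truncation argument is needed). Your extra care with the tail bound $C$---splitting on the event that no factor has multiplicity exceeding $K+1$ and using $N'_i\le (K+1)N_i$ together with Lemma \ref{lem: moment tail bound}---is more explicit than the paper, which simply declares the proof ``analogous'' to that of Theorem \ref{thm:main small p no mult}; this is a point the paper glosses over, and your treatment is a reasonable way to fill it in.
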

\begin{theorem}
\label{thm: main large p with mult}
Suppose that $\mu$ is a distribution on $\F_p$ and $\eta>0$. Then
\begin{equation*}
    d_{TV}((N_i(f))_{i\leq N},(N_i(\overline{f}))_{i\leq N})=O\left(\left(p^{-1}+n^{-\frac{1}{2}}\right)N^4\log^8 n\log^2 p\right),
\end{equation*}
where the implicit constant depends only on $\eta$.
\end{theorem}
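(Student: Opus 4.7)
The plan is to follow the template of the proof of Theorem \ref{thm: main large p no mult}, but to use the Hasse-derivative machinery developed in Sections \ref{sec: konyagin}--\ref{sec: moments} so as to handle multiplicities. There are three ingredients: a decomposition separating low-order factors, a joint-moment comparison via Proposition \ref{prop: moment discrep bound with mult}, and the passage from moments to total variation via Proposition \ref{prop: total variation bound}.

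First, I would decompose $N'_i(f) = \overline{N}'_i(f) + R_i(f)$, where $\overline{N}'_i(f)$ counts only high-order irreducible factors of degree $i$ with multiplicity (Definition \ref{def: order of roots}), and $R_i(f)$ collects the low-order contributions. A low-order element of $\F_{p^e}$ is a root of some cyclotomic polynomial $\Phi_m$ with $m$ bounded by the threshold from Definition \ref{def: order of roots}, so for $e \le N$ the number of low-order $\alpha \in \F_{p^e}$ is polynomial in $N, \log p, \log n$. Proposition \ref{prop:halasz-bound} applied to each such $\alpha$ with parameter $K_\alpha = k$ controls the probability that $\alpha$ is a root of $f$ with multiplicity $k+1$. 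Union-bounding over the low-order roots, their degrees $e \le N$, and multiplicities $1,\dotsc,K+1$ yields
\[
    \P\bigl[R_i(f) > 0 \text{ for some } i \le N \bigr] + \P\bigl[\text{some factor of degree} \le N \text{ has mult.} > K+1\bigr] = O\bigl((p^{-1}+n^{-1/2}) N^4 \log^8 n \log^2 p\bigr),
\]
and the analogous bound for $\overline{f}$ follows from the first part of Proposition \ref{prop: prob discrep bound}. The $N^4 \log^8 n$ factor (compared with $N^2 \log^4 n$ in the no-multiplicity setting) comes precisely from summing over multiplicities up to $K+1 \sim N \log^2 n$ and over the polynomially many low-order roots of each bounded degree.

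Next, take $H = N \log n$ and $K = N \log^2 n$ as in \eqref{eq: assumptions}. The second part of Proposition \ref{prop: moment discrep bound with mult} then gives a joint-moment discrepancy of size $\exp(-n^{\Omega(1)})$ for the high-order statistics $(\overline{N}'_i)$, for any choice of exponents of total degree $\le H$. To pass this moment matching to a total variation bound, I would apply Proposition \ref{prop: total variation bound} to the truncated vectors $(\overline{N}'_i(f) \wedge K)_{i \le N}$ and $(\overline{N}'_i(\overline{f}) \wedge K)_{i \le N}$; the tail hypothesis \eqref{eq: moment tail assumption} follows from the crude bound $\sum_i (\overline{N}'_i \wedge K) \le K \sum_i N_i$ combined with Lemma \ref{lem: moment tail bound}. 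Summing the pointwise estimate from Proposition \ref{prop: total variation bound} over the support of $(N'_i)_{i \le N}$, which has size at most $n^N$, then gives a superpolynomially small bound on the total variation distance between the truncated vectors. Combining with the low-order estimate of the previous paragraph yields the theorem.

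The main obstacle is tail control: unlike the no-multiplicity case, the a priori bound $N'_i \le n$ is far too weak to feed into Proposition \ref{prop: total variation bound}. The resolution is truncation at multiplicity $K+1$, but one must then pay separately for the cost of this truncation, which is precisely the mechanism that couples the Hal\'asz estimate of Proposition \ref{prop:halasz-bound} into the final bound and is responsible for the $p^{-1} + n^{-1/2}$ factor. A secondary subtlety is that Proposition \ref{prop: moment discrep bound with mult} itself carries an $n^H p^{-K-1}$ term from the probability that some factor has multiplicity exceeding $K+1$, so the choice $K = N\log^2 n$ from \eqref{eq: assumptions} must simultaneously control this term and the Hal\'asz sum above; balancing these constraints is what pins down the quantitative shape of the bound.
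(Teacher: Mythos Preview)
Your overall plan matches the paper's: set $H=N\log n$ and $K=N\log^2 n$, compare joint moments of the high-order counts $\overline{N}'_i$ via Proposition~\ref{prop: moment discrep bound with mult}, convert to a total-variation bound via Proposition~\ref{prop: total variation bound}, and absorb the low-order contribution separately via Lemmas~\ref{lem: TV bound} and~\ref{lem: bound on prob of low order roots}. Your Hal\'asz discussion is exactly Lemma~\ref{lem: bound on prob of low order roots}; note, though, that the factor $K^2$ in that bound enters through the threshold $m_e$ in Definition~\ref{def: order of roots}, not from summing over multiplicities as you describe---the event ``some $R_i>0$'' is simply ``$f$ has a low-order root of degree $\le N$'' and involves no multiplicity sum.

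The genuine gap is your tail-control step. Truncating $\overline{N}'_i$ at $K$ and bounding $\sum_i(\overline{N}'_i\wedge K)\le K\sum_i N_i$ yields, via Lemma~\ref{lem: moment tail bound}, a constant
\[
C\le K^H\Bigl(\frac{H}{\log H-\log\log(N{+}1)}\Bigr)^H
\]
in Proposition~\ref{prop: total variation bound}. After summing over the support the second error term is then of order $\exp\bigl(H\log K+O(H)-H\log\log H\bigr)$, and with $K=N\log^2 n$ one has $\log K\gg\log\log H$ in every regime of interest, so this term diverges rather than decays. (There is a secondary issue: the moment comparison in Proposition~\ref{prop: moment discrep bound with mult} is for $\overline{N}'_i$ itself, not for $\overline{N}'_i\wedge K$, so truncation also decouples you from that input.) Thus your identification of the obstacle---that the crude bound $N'_i\le n$ is too weak---is correct, but the proposed resolution is not.

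The paper proceeds without truncation, implicitly relying on the direct analogue of Lemma~\ref{lem: moment tail bound} for $N'_i(\overline{f})$ in place of $N_i(\overline{f})$: the same generating-function argument goes through with $\bigl(1+\tfrac{z_iy^i}{1-y^i}\bigr)^{\pi(i)}$ replaced by $(1-z_iy^i)^{-\pi(i)}$, and because the negative-binomial and binomial moments with parameters $(\pi(i),q^{-i})$ agree to leading order, one recovers essentially the same bound $\bigl(H/(\log H-\log\log(N{+}1))\bigr)^H$ with only a constant loss. The moment discrepancy of Proposition~\ref{prop: moment discrep bound with mult} then transfers this to $f$, and the proof concludes exactly as in the no-multiplicity case.
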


\begin{remark}
The polynomial error in Theorems \ref{thm: main large p no mult} and \ref{thm: main large p with mult} should be necessary, at least for large enough $p$. To see this, note that the probability that $1$ is a root should be of order $n^{-\frac{1}{2}}$ by comparing to a normal using a local central limit theorem as long as $p>n^2$. Since the number of low order roots in $\F_p$ is roughly $\log p$ if we only consider roots in $\F_p$, we would expect $\E(N_1(f))\geq 1+O(n^{-\frac{1}{2}})$ with a polynomial rather than exponential error. Some numerical simulations given in Section \ref{sec: numerics} also support this.
\end{remark}

\subsection{Proof of main results}
\begin{proof}[Proof of Theorem \ref{thm:main small p no mult}]
We apply Proposition \ref{prop: total variation bound} to the random variables $N_i(f)$ for $i\leq N$, taking $H=N\log n$. We first note that by the first part of Proposition \ref{prop: moment discrep bound no mult} (and Remark \ref{rmk: F_q for weak bound}), we may take $\varepsilon=q^{NH}\exp\left(-\frac{cn}{HNp^2}\right)$. Then using Lemma \ref{lem: moment tail bound} to bound $C$, and summing over the support of the $N_i(f)$, which has size at most $(n+1)^N$ since $N_i(f)\leq n$, we obtain a bound for $d_{TV}((N_i(f))_{i\leq N},(N_i(\overline{f}))_{i\leq N})$ up to a constant of the form
\begin{equation*}
\begin{split}
    &\exp\left(N\log (n+1)+H\log N +HN\log q-\frac{cn}{HNp^2}\right)
    \\&\qquad+\exp\left(N\log(n+1) +CH-H\log\log H\right).
\end{split}
\end{equation*}
Since $H=N\log n$, the second term is $O(e^{-cN\log n})$, and since $n\gg N^4(\log n)^2p^2\log p$, the first term is $O\left(\exp\left(-\frac{cn}{HNp^2}\right)\right)=O(e^{-cN\log n})$. Finally, there's no harm in assuming that $N$ is as large as possible, since total variation distance cannot increase under projection, and so taking $N=cn^{\frac{1}{4}}\log^{-1}np^{-\frac{1}{2}}\log^{-\frac{1}{4}}q$ gives the desired bound.
\end{proof}

The proof of Theorem \ref{thm: main large p no mult} relies on the fact that when $p$ is large, with high probability, there are no low order roots. The following lemma allows us to use this to obtain total variation bounds by conditioning on this event.

\begin{lemma}
\label{lem: TV bound}
Let $X,X':\Omega_X\to S$ and $Y,Y':\Omega_Y\to S$ be random variables into some set $S$. Then
\begin{equation*}
    d_{TV}(X,Y)\leq d_{TV}(X',Y')+2\P(X\neq X')+2\P(Y\neq Y').
\end{equation*}
\end{lemma}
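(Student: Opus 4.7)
The plan is to prove this by a direct application of the triangle inequality for total variation distance, combined with the standard coupling inequality. Since $X,X'$ are defined on the same probability space $\Omega_X$ and $Y,Y'$ on $\Omega_Y$, we can use the fact that pointwise agreement forces agreement in distribution.

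First I would recall that for random variables $Z,Z'$ on a common probability space taking values in $S$, we have $d_{TV}(Z,Z') \leq \P(Z \neq Z')$. This follows because for any event $A \subseteq S$,
\begin{equation*}
    |\P(Z \in A) - \P(Z' \in A)| = |\E[\mathbf{1}_A(Z) - \mathbf{1}_A(Z')]| \leq \P(Z \neq Z'),
\end{equation*}
and taking the supremum over $A$ gives the claim. Applying this to $(X,X')$ on $\Omega_X$ and $(Y,Y')$ on $\Omega_Y$ yields $d_{TV}(X,X') \leq \P(X \neq X')$ and $d_{TV}(Y,Y') \leq \P(Y \neq Y')$.

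Second, I would apply the triangle inequality for $d_{TV}$ (which is a genuine metric on the space of probability distributions on $S$):
\begin{equation*}
    d_{TV}(X,Y) \leq d_{TV}(X,X') + d_{TV}(X',Y') + d_{TV}(Y',Y).
\end{equation*}
Substituting the coupling bounds from the previous step gives
\begin{equation*}
    d_{TV}(X,Y) \leq \P(X \neq X') + d_{TV}(X',Y') + \P(Y \neq Y'),
\end{equation*}
which is in fact slightly stronger than the claimed inequality (the factor of $2$ in the statement is slack and presumably included just for convenience of later application, or to accommodate a possibly unnormalized convention for $d_{TV}$). There is no real obstacle here; the lemma is purely a bookkeeping tool that will later let the authors replace $X$ and $Y$ by cleaner ``conditioned'' versions $X'$ and $Y'$ (e.g.\ restricting to the event that there are no low-order roots), at the cost of two error probabilities.
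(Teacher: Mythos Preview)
Your proof is correct and in fact establishes the slightly sharper bound without the factors of $2$. The paper takes a marginally different route: rather than invoking the triangle inequality for $d_{TV}$ and the coupling inequality as two separate black boxes, it works directly with a single event $A\subseteq S$ and splits $\P(X\in A)$ according to whether $X=X'$ (and similarly for $Y$), picking up one copy of $\P(X\neq X')$ when passing from $\P(X\in A)$ to $\P(X\in A,\,X=X')$ and a second copy when passing from $\P(X'\in A,\,X=X')$ back to $\P(X'\in A)$. This is where the factor of $2$ arises in their argument. Your approach is cleaner and gives the optimal constant; the paper's direct computation is self-contained but loses a harmless factor of $2$, which as you note is irrelevant for the application.
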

\begin{proof}
We have for any event $A\subseteq S$,
\begin{equation*}
\begin{split}
    &|\P(X\in A)-\P(Y\in A)|
    \\\leq &|\P(X\in A, X=X')-\P(Y\in A, Y=Y')|+\P(X\neq X')+\P(Y\neq Y')
    \\=&|\P(X'\in A, X=X')-\P(Y'\in A, Y=Y')|+\P(X\neq X')+\P(Y\neq Y')
    \\\leq &|\P(X'\in A)-\P(Y'\in A)|+2\P(X\neq X')+2\P(Y\neq Y').
\end{split}
\end{equation*}
The desired inequality follows by taking the supremum over $A$ on both sides.
\end{proof}

The next lemma shows that when $p$ is large, with high probability, there are no low order roots.

\begin{lemma}
\label{lem: bound on prob of low order roots}
Let $f(x)=\sum_{i=0}^n \varepsilon_i x^i$, and suppose that $N=o(n)$. The probability that $f$ has a low order root (in the sense of Definition \ref{def: order of roots}) of degree at most $N$ is bounded by
\begin{equation*}
    C \left(p^{-1}+\eta^{-\frac{1}{2}}n^{-\frac{1}{2}}\right)H^2K^2\log^2 p\log^2(HK \log p).
\end{equation*}
\end{lemma}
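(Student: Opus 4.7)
The plan is to union bound over all low order $\alpha \in \overline{\F_p}$ of degree at most $N$ and, for each such $\alpha$, apply the Hal\'asz-type bound of Proposition \ref{prop:halasz-bound} with $H=1$, a single root $\alpha$ of degree $e$, and no derivatives, which yields
\begin{equation*}
    \P[(x-\alpha)\mid f(x)] \leq \left(\frac{1}{p}+C\eta^{-\frac{1}{2}}(n/e)^{-\frac{1}{2}}\right)^{e}.
\end{equation*}

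First I would count the number of low order elements of each degree $e\leq N$. By Definition \ref{def: order of roots}, every such $\alpha$ has multiplicative order at most $m_e=CH(K+1)e\log p\log(H(K+1)e\log p)$, so it is a primitive $k$th root of unity for some $k\leq m_e$. Since there are $\phi(k)\leq k$ primitive $k$th roots of unity in $\overline{\F_p}$, the total number of elements of multiplicative order at most $m_e$ is at most $\sum_{k\leq m_e}\phi(k)\leq m_e^2/2$. The union bound then gives
\begin{equation*}
    \P[\exists \text{ low order root of degree }\leq N] \leq \sum_{e=1}^{N}\frac{m_e^2}{2}\left(\frac{1}{p}+C\eta^{-\frac{1}{2}}(n/e)^{-\frac{1}{2}}\right)^{e}.
\end{equation*}

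The key point is that the $e=1$ term dominates. For $e=1$ we get exactly $\tfrac{1}{2}m_1^{2}(p^{-1}+C\eta^{-1/2}n^{-1/2})$ with $m_1\leq CHK\log p\log(HK\log p)$, which gives the target bound after substitution (noting $\log(HKe\log p)\ll \log(HK\log p)$ for $e\leq N=o(n)$, absorbing the harmless factor into the constant). For $e\geq 2$ the inner factor is bounded by $\max(2p^{-1},\,2C\eta^{-1/2}\sqrt{e/n})^{e}$, which is at most $2^e p^{-e}$ when $p\leq \sqrt{n/e}$ and at most $(C\eta^{-1/2})^{e}(e/n)^{e/2}$ otherwise. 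Against the polynomial factor $m_e^2\ll e^2\,m_1^{2}$, this geometric decay in $e$ makes each subsequent term a lower order correction to the $e=1$ contribution; I would bound $\sum_{e\geq 2}e^{2}(2/p)^{e}\ll 1/p^{2}$ in the first regime and $\sum_{e\geq 2}e^{2}(C\eta^{-1/2}\sqrt{e/n})^{e}\ll n^{-1}$ in the second, both absorbed into $O(m_1^{2}(p^{-1}+\eta^{-1/2}n^{-1/2}))$.

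The main technical point, and the only place care is required, is verifying that the sum over $e\geq 2$ does not introduce an additional $N$-dependence; this is exactly the geometric/exponential decay argument above, and it is what prevents the naive bound $M^{2}(p^{-1}+\eta^{-1/2}n^{-1/2})$ with $M=m_N$ (which would carry a spurious $N^{2}$) from being needed. Putting the pieces together and substituting the value of $m_1$ yields the stated bound $C(p^{-1}+\eta^{-1/2}n^{-1/2})H^{2}K^{2}\log^{2}p\log^{2}(HK\log p)$.
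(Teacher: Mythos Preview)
Your proposal is correct and follows essentially the same approach as the paper's proof: apply the Hal\'asz-type bound of Proposition~\ref{prop:halasz-bound} to each potential low order root, count the low order elements of degree $e$ by $m_e^2$, union bound to obtain $\sum_{e\leq N} m_e^2\bigl(p^{-1}+C\eta^{-1/2}(n/e)^{-1/2}\bigr)^e$, and then show the $e=1$ term dominates. The only cosmetic difference is in the tail estimate: the paper absorbs both contributions into a single bound $\bigl(p^{-1}+C\eta^{-1/2}(n/i)^{-1/2}\bigr)^i \le \bigl(p^{-1}+C\eta^{-1/2}n^{-1/2}\bigr)\cdot i^{1/2}3^{-i+1}$ and uses $m_i^2/m_1^2 \le Ci^{2.5}$, whereas you split into the two regimes $p\lessgtr\sqrt{n/e}$; both arguments yield the same conclusion.
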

\begin{proof}
We see that by Proposition \ref{prop:halasz-bound}, if $\alpha\in \F_{p^i}$ is low order, then
\begin{equation*}
    \P(f(\alpha)=0)\leq \left(p^{-1}+C\eta^{-\frac{1}{2}}\left(\frac{n}{i}\right)^{-1/2}\right)^i.
\end{equation*}

Since the number of low order roots in $\F_{p^i}$ is bounded by $$m_i^2\leq CH^2K^2i^2\log^2 p\log^2(HKi \log p),$$ a union bound gives that the probability that $f$ has a low order root is bounded by
\begin{equation*}
    \sum _{i=1}^N m_i^2\left(p^{-1}+C\eta^{-\frac{1}{2}}\left(\frac{n}{i}\right)^{-1/2}\right)^i.
\end{equation*}

Note that 
\[
m_i^2/m_1^2 = i^2\log^2(HKi\log p)/\log^2(HK\log p) \le C i^{2.5},
\] while 
\[
\left(p^{-1}+C\eta^{-\frac{1}{2}}\left(\frac{n}{i}\right)^{-\frac{1}{2}}\right)^i \le \left(p^{-1}+C\eta^{-\frac{1}{2}}n^{-\frac{1}{2}}\right) \cdot i^{1/2} 3^{-i+1}
\]
for large enough $n$, since $i=o(n)$. Thus 
\begin{equation*}
\begin{split}
    &\sum_{i} m_i^2 \left(p^{-1}+C\eta^{-\frac{1}{2}}\left(\frac{n}{i}\right)^{-\frac{1}{2}}\right)^i \\
    &\le C \sum_{i} i^33^{-i+1}m_1^2\left(p^{-1}+C\eta^{-\frac{1}{2}}n^{-\frac{1}{2}}\right) \\
    &\le C \left(p^{-1}+\eta^{-\frac{1}{2}}n^{-\frac{1}{2}}\right)H^2K^2\log^2 p\log^2(HK \log p).
\end{split}
\end{equation*}
\end{proof}

With these lemmas, we can now prove Theorem \ref{thm: main large p no mult} in exactly the same way as we did \ref{thm:main small p no mult}, but counting only the high order roots.

\begin{proof}[Proof of Theorem \ref{thm: main large p no mult}]
We proceed as in the proof of Theorem \ref{thm:main small p no mult}, taking $H=N\log n$, except we consider the random variables $\overline{N}_i(f)$ and $\overline{N}_i(\overline{f})$, the number high order roots of degree $i$. Then after using the second part of Proposition \ref{prop: moment discrep bound no mult}, we obtain a bound for $d_{TV}((\overline{N}_i(f))_{i\leq N},(\overline{N}_i(\overline{f}))_{i\leq N})$ up to a constant by
\begin{equation*}
    \begin{split}
      &\exp\left(N\log(n+1) +H\log N+HN\log p-\frac{c(1-\|\mu\|_2^2)n}{NH\log p \log^5(NH\log p)}\right)
        \\&\qquad+\exp\left(N\log(n+1) +CH-H\log\log H\right).
    \end{split}
\end{equation*}
We may assume that $n\geq C N^4\log^7 n\log^2 p$ and $p=O(\exp(n^{\frac{1}{4}}))$ as otherwise the claimed upper bound is vacuous, and so the first term is 
\begin{equation*}
    O\left(\exp\left(-\frac{c(1-\|\mu\|_2^2)n}{N^2\log n\log p \log^5(N^2\log n\log p)}\right)\right)=O(e^{-cN\log n}),
\end{equation*}
and as $H=N\log n$, the second term is $O(e^{-cN\log n})$ as before. Thus,
\begin{equation*}
    d_{TV}((\overline{N}_i(f))_{i\leq N},(\overline{N}_i(\overline{f}))_{i\leq N})=O(e^{-cN\log n}).
\end{equation*}
Again, there's no harm in taking $N$ larger as long as the assumed inequality holds, and choosing $N=n^{\frac{1}{4}}\log^{-\frac{7}{4}}n\log^{-\frac{1}{2}}p$ gives an error of $\exp\left(-cn^{\frac{1}{4}}\log^{-\frac{3}{4}}n\log^{-\frac{1}{2}}p\right)$. Finally, we note that $p=O(\exp(n^{\frac{1}{4}}))$ and then this error is dominated by the claimed upper bound.

Finally, as $N_i(f)=\overline{N}_i(f)$ if we have no low order roots (and similarly for $\overline{f}$), by Lemmas \ref{lem: TV bound} and \ref{lem: bound on prob of low order roots} (applied to both $f$ and $\overline{f}$) and the bound just obtained, the result follows, again using that we may assume $p\leq e^n$ to simplify the upper bound.
\end{proof}

\begin{proof}[Proof of Theorem \ref{thm: main small p with mult}]
Take $H=N\log n$ and $K=N\log^2 n$. Then the proof is analogous to the proof of Theorem \ref{thm:main small p no mult}. We use Proposition \ref{prop: total variation bound} together with the moment bounds of Proposition \ref{prop: moment discrep bound with mult}, where the upper bound simplifies to an $O(e^{-cN\log n})$ bound from the assumption on $n$ and the choice of $H$ and $K$, and again we can take $N$ as large as possible without issue.
\end{proof}

\begin{proof}[Proof of Theorem \ref{thm: main large p with mult}]
Take $H=N\log n$ and $K=N\log^2n$. The proof is then analogous to that of Theorem \ref{thm: main large p no mult}. We first assume $n\geq CN^5\log^9 n\log^2p$ and $p=O(\exp(n^{\frac{1}{4}}))$ as otherwise the claimed upper bound is vacuous. We use Proposition \ref{prop: total variation bound}, together with Proposition \ref{prop: moment discrep bound with mult}, where the upper bound simplifies to an $O(e^{-cN\log n})$ bound by the assumption on $n$ and the choice of $H$ and $K$. We may then choose $N$ to be large as long as the above inequality holds, so we take $N=n^{\frac{1}{4}}$, at which point this error is dominated by the claimed upper bound.

Finally, we proceed as in the proof of Theorem \ref{thm: main large p no mult}, using Lemmas \ref{lem: bound on prob of low order roots} and \ref{lem: TV bound} to restrict to the event that there are no low order roots, and again use $p=O(\exp(n^{\frac{1}{4}}))$ to obtain the desired bound.
\end{proof}

\subsection{Unconditional bounds}
We showed two different bounds for the total variation distance between $N_i(f)$ and $N_i'(\overline{f})$ that work well in different regimes. As a corollary, we can derive total variation bounds for a very large range of $p$. Here, we prove Corollaries \ref{cor: no mult}, \ref{cor: with mult} and \ref{cor: with q}. We present results maximizing $N$, the maximum degree of the irreducible factors we can control. One could take $p$ as large as $e^{n^{\frac{1}{4}-\delta}}$ by taking smaller $N$.

\begin{proof}[Proof of Corollary \ref{cor: no mult}]
First, suppose that $p\leq n^{\frac{1}{4}}$. Then taking $N=n^{\frac{1}{8}-\delta}$, for large enough $n$ the condition of Theorem \ref{thm:main small p no mult} is satisfied and the result follows.

If $n^{\frac{1}{4}}\leq p\leq e^{n^{\frac{1}{8}}}$, then taking $N=n^{\frac{1}{8}-\delta}$, the condition of Theorem \ref{thm: main large p no mult} holds, and the error bound obtained is bounded by $O(n^{-\delta})$.
\end{proof}

\begin{proof}[Proof of Corollary \ref{cor: with mult}]
First, suppose that $p\leq n^{\frac{4}{13}}$. Then taking $N=n^{\frac{1}{13}-\delta}$, for large enough $n$ the condition of Theorem \ref{thm: main small p with mult} is satisfied and the result follows.

Otherwise, if $n^{\frac{4}{13}}\leq p\leq e^{n^{\frac{1}{13}}}$, then taking $N=n^{\frac{1}{13}-\delta}$, the conditions of Theorem \ref{thm: main large p with mult} hold, and the error bound obtained is bounded by $O(n^{-3\delta})$.
\end{proof}

We can also obtain stronger results if $p$ is fixed, and in fact this even works over $\F_q$.

\begin{proof}[Proof of Corollary \ref{cor: with q}]
This result immediately follows from Theorems \ref{thm:main small p no mult} and \ref{thm: main small p with mult}.
\end{proof}

\subsection{Number of irreducible factors of fixed degree}

\begin{theorem}\label{thm:1-degree}
The following statements hold for positive absolute constants $c,C>0$. 
\begin{enumerate}
    \item Suppose $(\log p)^2 \log^{10}(n\log p)i^2 \le n$, and $i>10(\log n)/(\log p)$. Then $$d_{TV}(N_i(f),N_i(\overline{f})) \le \exp\left(- \frac{\eta}{C} \left(\frac{n}{i^2 (\log p)^2 (\log n)^5 }\right)^{1/2}\right) + \left(\frac{1}{p}+C\eta^{-1/2}\lfloor n/i\rfloor^{-1/2}\right)^{i/2}.$$
    \item Suppose $1\le i \le 10 (\log n)/(\log p)$ and $p>(\log n)^{100}$. Then 
    \begin{align*}
    d_{TV}(N_i(f),N_i(\overline{f})) 
    &\le \exp(-c\log n\log \log n) + \left(\frac{1}{p}+C\eta^{-1/2}\lfloor n/i\rfloor^{-1/2}\right)^{i/2}.
    \end{align*}
    \item Suppose $n \ge Ci^2 p^2 \log p (\log n)^2$. Then $$d_{TV}(N_i(f),N_i(\overline{f})) \le \exp\left(-\frac{\eta}{C} \cdot \frac{ n^{1/2}}{pi \sqrt{\log p}}\right).$$
\end{enumerate}

\end{theorem}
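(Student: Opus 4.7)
The plan is to adapt the arguments used for Theorems \ref{thm:main small p no mult} and \ref{thm: main large p no mult} to the single statistic $N_i(f)$, which in the language of Proposition \ref{prop: total variation bound} means taking the dimension $N=1$ while tracking the dependence on the degree $i$ carefully inside Proposition \ref{prop: moment discrep bound no mult} (whose parameter $N$ we read as $i$ here).

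\emph{Part (3), small-$p$ regime.} Apply Proposition \ref{prop: total variation bound} to the integer-valued random variable $N_i(f)$, using the first (weak-Fourier) part of Proposition \ref{prop: moment discrep bound no mult} to bound the moment discrepancy up to order $H$ by $\varepsilon \le p^{iH}\exp(-cn/(iHp^2))$. The constant $C$ in Proposition \ref{prop: total variation bound} is controlled by Lemma \ref{lem: moment tail bound}, giving $C\pi^H/H!\le (e\pi/\log H)^H$. Summing over the support of $N_i(f)$ (of size at most $n/i+1$) and choosing $H$ of order $\log n$ balances the two contributions; under the hypothesis $n\ge C i^2 p^2 \log p (\log n)^2$ the moment term becomes the binding one and yields an exponent matching the claim.

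\emph{Parts (1) and (2), large-$p$ regime.} Decompose $N_i(f)=\overline{N}_i(f)+L_i(f)$ into high- and low-order factors (Definition \ref{def: order of roots}), and invoke Lemma \ref{lem: TV bound}. The high-order piece is handled by the same moment-matching scheme, but now using the second (Konyagin-based) part of Proposition \ref{prop: moment discrep bound no mult}, giving $\varepsilon \le p^{iH}\exp(-\eta n/(C i H (\log p)\log^5(iH\log p)))$. For Part (1), balancing $iH\log p$ against the Konyagin exponent dictates $H\asymp (n/(i^2(\log p)^2(\log n)^5))^{1/2}$ and produces the exponent $(\eta n/(C i^2(\log p)^2(\log n)^5))^{1/2}$. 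For Part (2), the hypothesis $i\le 10\log n/\log p$ makes the moment discrepancy term negligible for any $H\asymp \log n$, so the dominant contribution is the Poisson tail $C\pi^H/H!\le (e\pi/\log H)^H$, which for $H=\log n$ evaluates to $\exp(-c\log n\log\log n)$.

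The low-order piece is treated by Proposition \ref{prop:halasz-bound}: for any fixed $\alpha\in\F_{p^i}$,
\begin{equation*}
\P[f(\alpha)=0]\le \bigl(p^{-1}+C\eta^{-1/2}\lfloor n/i\rfloor^{-1/2}\bigr)^i.
\end{equation*}
The number of low-order elements in $\F_{p^i}$ is at most $m_i^2$ where $m_i=CHi\log p\log(Hi\log p)$ (cf.\ Definition \ref{def: order of roots} with $K=0$), so a union bound gives $\P(L_i(f)\neq 0)\le m_i^2(p^{-1}+C\eta^{-1/2}\lfloor n/i\rfloor^{-1/2})^i$. To arrive at the stated $(\cdot)^{i/2}$ form we absorb $m_i^2$ into half of the $i$-th power: in Part (1) the hypothesis $i>10\log n/\log p$ gives $p^{i/2}>n^5\gg m_i^2$, and in Part (2) the hypothesis $p>(\log n)^{100}$ gives $p^{i/2}\ge (\log n)^{50}$, which still dominates the polylogarithmic $m_i^2$. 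The analogue for $\overline{f}$ is immediate since $\P[(x-\alpha)\mid \overline{f}]\le p^{-i}$. The main obstacle is verifying that these absorption steps are quantitatively consistent with the three hypotheses while simultaneously choosing $H$ to match the three different exponents; once this bookkeeping is done, the three parts follow by combining Lemma \ref{lem: TV bound} with the respective bounds.
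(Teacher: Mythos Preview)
Your overall architecture is exactly the paper's: apply Proposition~\ref{prop: total variation bound} with the moment-discrepancy bounds of Proposition~\ref{prop: moment discrep bound no mult}, control the constant $C$ via Lemma~\ref{lem: moment tail bound}, and for Parts~(1)--(2) split into high/low order using Lemma~\ref{lem: TV bound} and Proposition~\ref{prop:halasz-bound}. Your choice of $H\asymp \bigl(n/(i^{2}(\log p)^{2}(\log n)^{5})\bigr)^{1/2}$ for Part~(1) also matches the paper, and the absorption of $m_i^{2}$ into half of the $i$th power is the same idea.

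The genuine gap is in Part~(3). Taking $H$ of order $\log n$ does \emph{not} give the claimed bound. With $H\asymp\log n$ the Poisson-tail contribution $C\pi^{H}/H!\le (e\pi/\log H)^{H}$ is only of size $\exp\bigl(-c\log n\cdot\log\log\log n\bigr)$, whereas the target bound $\exp\bigl(-\eta n^{1/2}/(Cip\sqrt{\log p})\bigr)$ can be arbitrarily smaller: e.g.\ for bounded $i,p$ it is $\exp(-cn^{1/2})$. So the tail term, not the moment term, is the binding one, and it is too large. The paper instead chooses
\[
H=\Bigl(\tfrac{n}{Ci^{2}p^{2}\log p}\Bigr)^{1/2},
\]
which is precisely the scale at which $p^{Hi}\exp(-cn/(Hip^{2}))$ and $(e\pi/\log H)^{H}$ are both of order $\exp\bigl(-cn^{1/2}/(ip\sqrt{\log p})\bigr)$; the hypothesis $n\ge Ci^{2}p^{2}\log p(\log n)^{2}$ is exactly what guarantees $H\ge \log n$ so that the tail term is usable. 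A similar (but milder) issue affects your Part~(2): with $H=\log n$ the tail bound $(e\pi/\log H)^{H}$ has a $\log\log\log n$, not a $\log\log n$, in the exponent; the paper takes $H=i\log n$, though the discrepancy here is less severe since $i$ is bounded by a polylog.
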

\begin{proof}
Throughout the argument we denote by $C$ absolute constants independent of all other parameters. 

We have from Proposition \ref{prop: moment discrep bound no mult} that for $h\le H$,
\[
\left|\E\left[\overline{N}_i(f)^{h}\right]-\E\left[\overline{N}_i(\overline{f})^{h}\right]\right| \le p^{Hi}\exp\left(-\frac{\eta n}{CHi \log p \log^{5}(H i \log p)}\right).
\]
Thus, by using Proposition~\ref{prop: total variation bound}, we have  
\begin{align*}
&d_{TV}(\overline{N}_i(f),\overline{N}_i(\overline{f})) \\
&\le C n p^{Hi}\exp\left(-\frac{\eta n}{CHi \log p \log^{5}(H i \log p)}\right) + C n \left(\frac{H}{\log H}\right)^H \frac{\pi^H}{H!}.
\end{align*}

Recall $m_i = CHi\log p \log(Hi\log p)$. The probability that $N_i(f) - \overline{N}_i(f) \ne 0$ is at most $$m_i^2 \left(\frac{1}{p}+C\eta^{-1/2}\lfloor n/i\rfloor^{-1/2}\right)^i,$$
and the same bound holds for $\overline{f}$. 
Then we have
\begin{align*}
&d_{TV}(\overline{N}_i(f),\overline{N}_i(\overline{f})) \\
&\le C n p^{Hi}\exp\left(-\frac{\eta n}{CHi \log p \log^{5}(H i \log p)}\right) + n(\log H)^{-H/2} + m_i^2 \left(\frac{1}{p}+C\eta^{-1/2}\lfloor n/i\rfloor^{-1/2}\right)^i.
\end{align*}
By choosing $H=c\left(\frac{n}{i^2(\log p)^2 \log^5(n\log p)}\right)^{1/2}$, we obtain 
\begin{align*}
&d_{TV}(\overline{N}_i(f),\overline{N}_i(\overline{f})) \\
&\le C n \exp\left(- \frac{\eta}{C} \left(\frac{n}{\log^{5}(n\log p)}\right)^{1/2}\right) + n(\log H)^{-H/2} + \frac{Cn}{(\log n)^3} \left(\frac{1}{p}+C\eta^{-1/2}\lfloor n/i\rfloor^{-1/2}\right)^i\\
&\le C \exp\left(- \frac{\eta}{C} \left(\frac{n}{i^2 (\log p)^2 (\log n)^5 }\right)^{1/2}\right) + \frac{Cn}{(\log n)^3} \left(\frac{1}{p}+C\eta^{-1/2}\lfloor n/i\rfloor^{-1/2}\right)^i.
\end{align*}
On the other hand, we can choose $H=i \log n$ and obtain 
\begin{align*}
&d_{TV}(\overline{N}_i(f),\overline{N}_i(\overline{f})) \\
&\le C n \exp\left(-\frac{\eta}{C} \cdot \frac{n}{i^2(\log n)(\log p)\log^{5}(n\log p)}\right) + \exp(-ci\log n\log \log n) \\
&\qquad \qquad + (i^2 \log n\log p)^3 \left(\frac{1}{p}+C\eta^{-1/2}\lfloor n/i\rfloor^{-1/2}\right)^i\\
&\le C n \exp\left(-\frac{\eta}{C} \cdot \frac{n}{i^2(\log n)^6(\log p)}\right) + n^{-ci} + (i \log n\log p)^6 \left(\frac{1}{p}+C\eta^{-1/2}\lfloor n/i\rfloor^{-1/2}\right)^i.
\end{align*}
If $i > 10\log_p(n)$, the first bound yields
\begin{align*}
&d_{TV}(\overline{N}_i(f),\overline{N}_i(\overline{f})) \\
&\le \exp\left(- \frac{\eta}{C} \left(\frac{n}{i^2 (\log p)^2 (\log n)^5 }\right)^{1/2}\right) + \left(\frac{1}{p}+C\eta^{-1/2}\lfloor n/i\rfloor^{-1/2}\right)^{i/2}.
\end{align*}
Otherwise, if $i\le 10\log_p(n)$ and $p>(\log n)^{100}$, the second bound yields
\begin{align*}
&d_{TV}(\overline{N}_i(f),\overline{N}_i(\overline{f})) \\
&\le \exp\left(-\frac{\eta}{C} \cdot \frac{n}{i(\log n)^6(\log p)}\right) + n^{-ci} + \left(\frac{1}{p}+C\eta^{-1/2}\lfloor n/i\rfloor^{-1/2}\right)^{i/2}.
\end{align*}

Furthermore, we always have
\[
\left|\E\left[{N}_i(f)^{h}\right]-\E\left[\overline{N}_i({f})^{h}\right]\right| \le p^{hi}\exp\left(-\frac{\eta n}{Chip^2}\right),
\]
so
\begin{align*}
&d_{TV}({N}_i(f),{N}_i(\overline{f})) \\
&\le C n p^{Hi}\exp\left(-\frac{\eta n}{CHi p^2}\right) + C n \left(\frac{H}{\log H}\right)^H \frac{\pi^H}{H!}.
\end{align*}
Choose $H=\left(\frac{n}{Ci^2p^2\log p}\right)^{1/2}$. Assuming that $H \ge \log n$, then
\[
d_{TV}({N}_i(f),{N}_i(\overline{f})) \le \exp\left(-\frac{\eta}{C} \left(\frac{n^{1/2}}{p/\sqrt{\log p}}\right)\right) + \exp(-H/2),
\]
from which we obtain 
\[
d_{TV}(N_i(f),N_i(\overline{f})) \le \exp\left(-\frac{\eta}{C} \cdot \frac{ n^{1/2}}{pi \sqrt{\log p}}\right).
\]
\end{proof}

The proof of Corollary~\ref{cor: min} follows from the following result.
\begin{corollary}
For $\epsilon > 0$, we have the following for $c$ a positive absolute constant independent of $\epsilon$ and $\eta$. Assume that $p<\exp(c n^{1/2-\epsilon}/i)$. Then 
\begin{enumerate}
    \item For $(\log n)^{2} \le i \le n^{1/2-2\epsilon}$,
\[
d_{TV}(N_i(f),N_i(\overline{f})) = O_{\eta,\epsilon}(\exp(-c\eta n^{1/2}/(i^2 (\log p)^2 (\log n)^5)^{1/2})+\exp(-ci)).
\]
\item  
For $i<(\log n)^{2}$,
\[
d_{TV}(N_i(f),N_i(\overline{f})) = O_{\eta,\epsilon}(\exp(-c\eta n^{1/2}/(i^2 (\log p)^2 (\log n)^5)^{1/2})+\exp(-n^{c})+n^{-ci}).
\]
\end{enumerate}
\end{corollary}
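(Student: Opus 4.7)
The plan is to apply Theorem~\ref{thm:1-degree} after a case analysis based on the sizes of $i$ and $p$, and to control the anti-concentration term $\left(1/p + C\eta^{-1/2}\lfloor n/i\rfloor^{-1/2}\right)^{i/2}$ appearing in parts~(1) and~(2) of that theorem. Throughout, the hypothesis $p<\exp(cn^{1/2-\epsilon}/i)$ supplies the key inequality $i\log p<cn^{1/2-\epsilon}$.

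For part~(1) of the corollary, where $(\log n)^{2}\le i\le n^{1/2-2\epsilon}$, I would invoke Theorem~\ref{thm:1-degree}(1). The hypothesis $i>10(\log n)/(\log p)$ is immediate from $i\ge(\log n)^{2}$, while $(\log p)^{2}\log^{10}(n\log p)\,i^{2}\le n$ follows from $(i\log p)^{2}<c^{2}n^{1-2\epsilon}$ together with $\log(n\log p)=O(\log n)$. The first term of the theorem is exactly the claimed Fourier bound. For the anti-concentration term, the constraint $i\le n^{1/2-2\epsilon}$ gives $C\eta^{-1/2}(n/i)^{-1/2}\le C\eta^{-1/2}n^{-1/4-\epsilon}=o(1)$ and $1/p\le 1/2$, so the bracket is at most $3/4$ for $n$ large, yielding $(3/4)^{i/2}\le\exp(-ci)$.

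For part~(2), where $i<(\log n)^{2}$, fix a small absolute constant $c_{0}>0$ (say $c_{0}=1/16$) and split on whether $p\le n^{2c_{0}}$. When $p\le n^{2c_{0}}$, apply Theorem~\ref{thm:1-degree}(3); its hypothesis $n\ge Ci^{2}p^{2}\log p\,(\log n)^{2}$ holds because the right-hand side is $O(n^{4c_{0}+o(1)})$, and the resulting bound $\exp(-c\eta n^{1/2}/(pi\sqrt{\log p}))$ simplifies to $\exp(-c\eta n^{1/2-2c_{0}}/(\log n)^{5/2})\le\exp(-n^{c'})$ for some $c'>0$. When $p>n^{2c_{0}}$, note that $10(\log n)/(\log p)\le 5/c_{0}$, so invoke Theorem~\ref{thm:1-degree}(2) if $i\le 5/c_{0}$ (observing $p>(\log n)^{100}$ is automatic for $n$ large) and Theorem~\ref{thm:1-degree}(1) otherwise. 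In the first sub-case, the first term $\exp(-c\log n\log\log n)=n^{-c\log\log n}$ is bounded by $n^{-ci}$ since $i\le 5/c_{0}\le\log\log n$ for $n$ large; in the second, the first term matches the stated Fourier bound. The anti-concentration term is handled uniformly: since $p>n^{2c_{0}}$ and $(n/i)^{-1/2}=O(n^{-1/4+o(1)})$, the bracket is at most $2n^{-2c_{0}}$, and hence $(1/p+C\eta^{-1/2}(n/i)^{-1/2})^{i/2}\le 2^{i/2}n^{-c_{0}i}\le n^{-(c_{0}/2)i}$ for $n\ge 2^{1/c_{0}}$.

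The main obstacle is the bookkeeping needed to stitch the three parts of Theorem~\ref{thm:1-degree} into a single clean bound; the most delicate point is the narrow window $p>n^{2c_{0}}$ with $i\le 5/c_{0}$ a bounded constant, where one must absorb the coarser first term $\exp(-c\log n\log\log n)$ coming from Theorem~\ref{thm:1-degree}(2) into the $n^{-ci}$ term, which uses that $\log\log n$ eventually exceeds any fixed constant bound on $i$.
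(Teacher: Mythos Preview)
Your proposal is correct and follows essentially the same approach as the paper's proof: both argue part~(1) directly from Theorem~\ref{thm:1-degree}(1), and for part~(2) both split according to whether $p$ is below or above a fixed power of $n$ (you take $n^{2c_0}=n^{1/8}$, exactly the paper's threshold), invoking Theorem~\ref{thm:1-degree}(3) in the small-$p$ regime and Theorems~\ref{thm:1-degree}(1)--(2) in the large-$p$ regime. Your write-up is in fact more explicit than the paper's about verifying the hypotheses of Theorem~\ref{thm:1-degree} and about controlling the anti-concentration term, and your further sub-split on $i\le 5/c_0$ versus $i>5/c_0$ makes precise what the paper leaves implicit in the phrase ``use the first two claims.''
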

\begin{proof}
If $(\log n)^2 \le i \le n^{1/2-2\epsilon}$, then we have by the first claim in Theorem \ref{thm:1-degree} that 
\[
d_{TV}(N_i(f),N_i(\overline{f})) = O_{\eta,\epsilon}(\exp(-c\eta n^{1/2}/(i^2 (\log p)^2 (\log n)^5)^{1/2})+\exp(-ci)).
\]
If $i<(\log n)^{2}$, $p<n^{1/8}$, we can use the third claim in Theorem \ref{thm:1-degree} to obtain 
\[
d_{TV}(N_i(f),N_i(\overline{f})) = \exp(-n^{c}).
\]
If $i<(\log n)^{2}$, $p>n^{1/8}$ and $p<\exp(cn^{1/2-\epsilon}/i)$, we can use the first two claims in Theorem \ref{thm:1-degree} to obtain 
\[
d_{TV}(N_i(f),N_i(\overline{f})) = O_{\eta,\epsilon}(\exp(-c\eta n^{1/2}/(i^2 (\log p)^2 (\log n)^5)^{1/2}) + \exp(-n^{1/2-o(1)}) + n^{-ci}).
\]
\end{proof}

With an identical proof, we can also obtain similar conclusions for other linear statistics of the number of roots of each degree, such as the total number of roots of degree bounded by $N\le n^{1/2-o(1)}$.
\begin{corollary}
Let $T(f) = \sum_{i\le N}N_i(f)$. Then we have for $N\le n^{1/2-\epsilon}$, we have that
\[
d_{TV}(T(f),T(\overline{f})) = O_{\eta,\epsilon}(\exp(-n^{c\epsilon})+\exp(-cN)).
\]
\end{corollary}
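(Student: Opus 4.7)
The plan is to mimic the proof of Theorem \ref{thm:1-degree} (and the preceding Corollary), treating the scalar-valued linear statistic $T(f)=\sum_{i\le N}N_i(f)$ as a single integer-valued random variable in Proposition \ref{prop: total variation bound}. As a first step I would pass from $T$ to the high-order counterpart $\overline{T}(f)=\sum_{i\le N}\overline{N}_i(f)$, so that by Lemma \ref{lem: TV bound}
\[
d_{TV}(T(f),T(\overline{f}))\ \le\ d_{TV}(\overline{T}(f),\overline{T}(\overline{f}))+2\P[T(f)\neq \overline{T}(f)]+2\P[T(\overline{f})\neq \overline{T}(\overline{f})],
\]
splitting the problem into a moment-matching part and a low-order exceptional part.

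For the moment-matching part, I would expand $\overline{T}(f)^{h}$ as a sum over tuples $(g_1,\dots,g_h)$ of high-order monic irreducibles of degree at most $N$. The number of such irreducibles is at most $2p^N$, and for each tuple the relevant dimension parameter satisfies $d\le Nh$. Applying the high-order case of Proposition \ref{prop: prob discrep bound} termwise yields
\[
\bigl|\E[\overline{T}(f)^{h}]-\E[\overline{T}(\overline{f})^{h}]\bigr|\ \le\ (2p^{N})^{h}\exp\!\left(-\frac{\eta n}{CNh\log p\,\log^{5}(Nh\log p)}\right)
\]
for every $h\le H$. Combining this with Lemma \ref{lem: moment tail bound} to bound the $H$-th moment tail (and using that $\overline{T}(f)\le n$ so that the support sum contributes only a factor $n+1$), Proposition \ref{prop: total variation bound} gives
\[
d_{TV}(\overline{T}(f),\overline{T}(\overline{f}))\ \le\ (n+1)\!\left[e^{\pi}(2p^{N})^{H}\exp\!\left(-\tfrac{\eta n}{CNH\log p\,\log^{5}(NH\log p)}\right)+\tfrac{2\pi^{H}}{H!}\bigl(\tfrac{H}{\log H}\bigr)^{H}\right].
\]
Optimizing $H$ exactly as in the proof of Theorem \ref{thm:1-degree}, namely taking $H$ of order $\sqrt{n/(N^{2}(\log p)^{2}\log^{5}(n\log p))}$ (possibly truncated from below by a multiple of $N$ to drive the tail term), renders both summands at most $\exp(-n^{c\epsilon})+\exp(-cN)$ throughout the range $N\le n^{1/2-\epsilon}$.

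For the low-order exceptional term, I would apply the Hal\'asz-type bound of Proposition \ref{prop:halasz-bound} together with the upper bound $m_i\le CHi\log p\,\log(Hi\log p)$ on the number of low-order roots of degree $i$ (in the $K=0$ case). A union bound gives
\[
\P[T(f)\ne \overline{T}(f)]\ \le\ \sum_{i=1}^{N}m_i\!\left(\tfrac{1}{p}+C\eta^{-1/2}\lfloor n/i\rfloor^{-1/2}\right)^{\!i},
\]
and the analogous sum for $\overline{f}$ using the exact identity $\P[g\mid\overline{f}]=p^{-\deg g}$; geometric decay in $i$ makes both dominated by the $i=1$ term, which is absorbed into the $\exp(-n^{c\epsilon})$ contribution in the regime $p<\exp(cn^{1/2-\epsilon}/i)$. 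Assembling the three pieces via the displayed decomposition above yields the claim. The main technical obstacle is the choice of $H$: because the Hal\'asz bound at small $i$ only decays polynomially in $n$, the moment-matching piece must carry the full sub-exponential improvement, and the two competing exponents in the Proposition \ref{prop: total variation bound} upper bound must be balanced carefully across the full range $N\le n^{1/2-\epsilon}$ simultaneously. This is exactly the balancing already performed in the single-degree case, so nothing new is required beyond replacing $p^{Hi}$ by $(2p^{N})^{H}$ and tracking the effective dimension $Nh$.
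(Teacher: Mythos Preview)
Your overall strategy matches the paper's (which simply asserts ``with an identical proof''): pass to the high-order count $\overline T$, match moments via Proposition~\ref{prop: total variation bound}, and control the low-order exceptional set with the Hal\'asz bound. The moment-matching half and the choice of $H$ are fine and parallel Theorem~\ref{thm:1-degree}.

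The gap is in your treatment of the low-order exceptional term. First, a minor slip: the number of low-order elements of degree $i$ is bounded by $m_i^2$, not $m_i$ (this is how it is used in Lemma~\ref{lem: bound on prob of low order roots}; $m_i$ is the order threshold, and the count of elements of order at most $m_i$ is $\le m_i^2$). More substantively, the $i=1$ contribution to your union bound is of order
\[
m_1^2\Bigl(p^{-1}+C\eta^{-1/2}n^{-1/2}\Bigr),
\]
and for small $p$ this is \emph{not} absorbed by $\exp(-n^{c\epsilon})$: with your choice of $H$ one has $m_1^2\asymp H^2(\log p)^2\log^2(H\log p)$, which can be far larger than $p$ when $p$ is bounded or polylogarithmic. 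The hypothesis $p<\exp(cn^{1/2-\epsilon})$ you invoke restricts $p$ from above, not below, and does nothing to make $m_1^2/p$ small.

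The paper handles this (implicitly, through the case analysis of the preceding Corollary) by treating small $p$ separately: when $p$ is small, one discards the high/low-order split entirely and instead uses the uniform Fourier estimate of Proposition~\ref{prop: weak fourier bound} (equivalently, the third claim of Theorem~\ref{thm:1-degree}) to bound the moment differences of $T$ directly, with no exceptional-root term at all. You need to add that branch to cover the full range of $p$.
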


\section{Numerical simulations}
\label{sec: numerics}
In this section, we provide some numerical simulations which both support the main results and also suggest some directions for further investigation. 

\subsection{Low degree factors}
We begin with some simulations supporting our results on the low degree factors.

\begin{figure}
    \centering
    \begin{subfigure}[]{0.45\textwidth}
    \centering
    \includegraphics[scale=0.5]{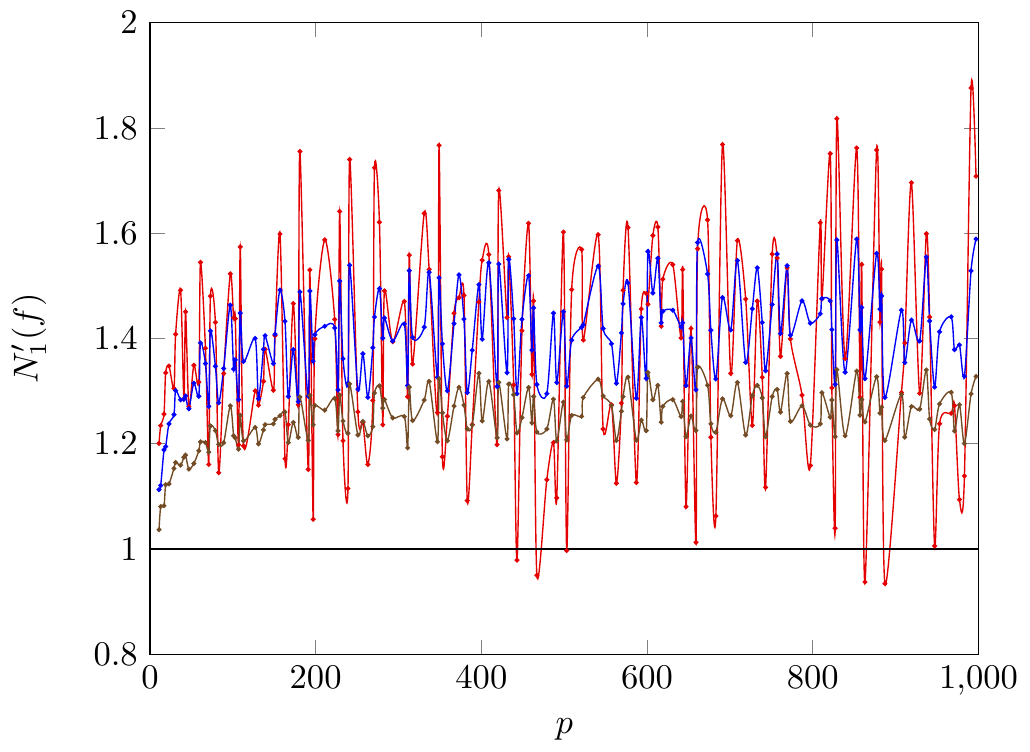}
    \subcaption{$i=1$, small $p$}
    \end{subfigure}
    \begin{subfigure}[]{0.45\textwidth}
    \centering
    \includegraphics[scale=0.5]{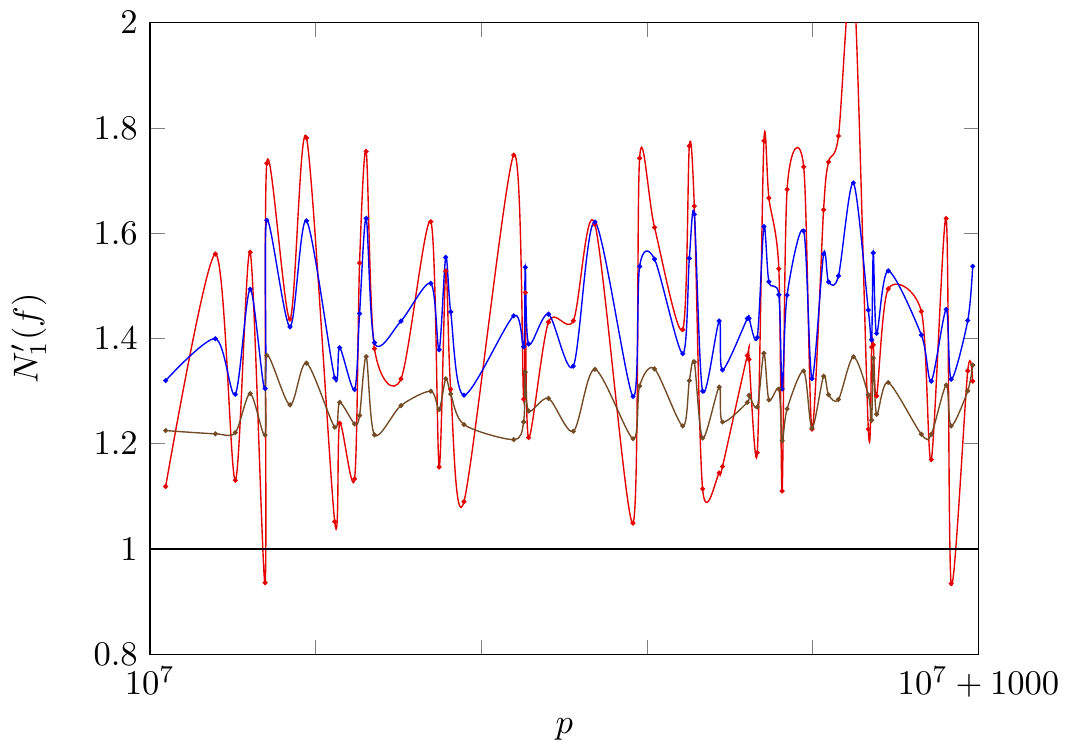}
    \subcaption{$i=1$, large $p$}
    \end{subfigure}
    \\
    \begin{subfigure}[]{0.45\textwidth}
    \centering
    \includegraphics[scale=0.5]{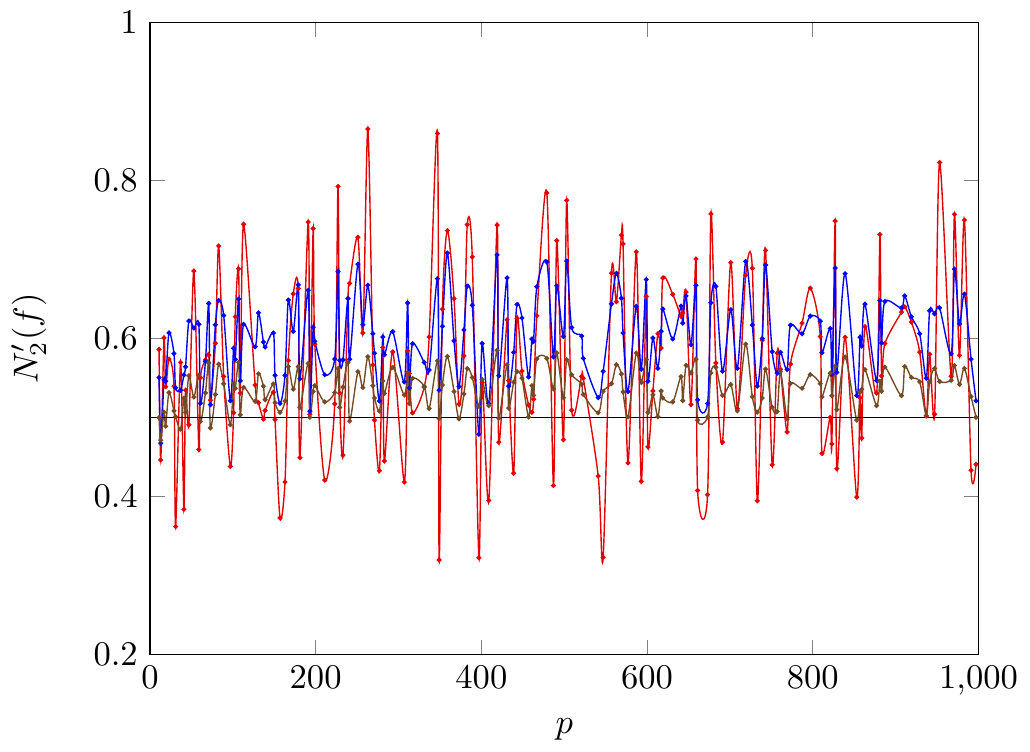}
    \subcaption{$i=2$, small $p$}
    \end{subfigure}
    \begin{subfigure}[]{0.45\textwidth}
    \centering
    \includegraphics[scale=0.5]{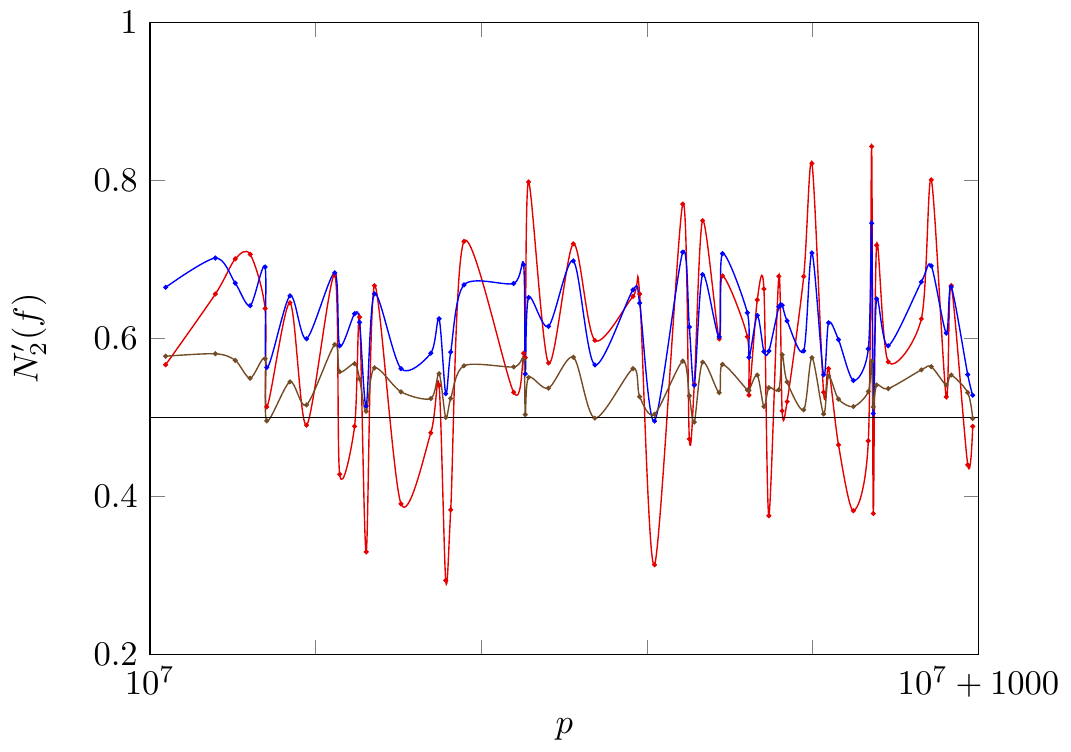}
    \subcaption{$i=2$, large $p$}
    \end{subfigure}
    \\
    \begin{subfigure}[]{0.45\textwidth}
    \centering
    \includegraphics[scale=0.5]{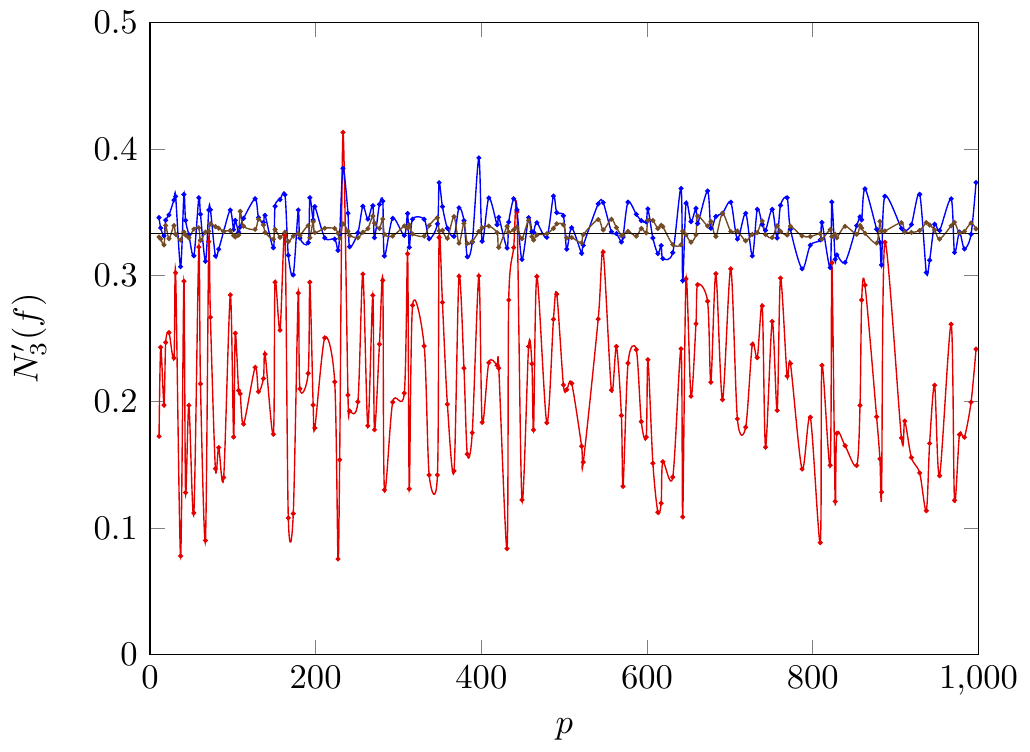}
    \subcaption{$i=3$, small $p$}
    \end{subfigure}
    \begin{subfigure}[]{0.45\textwidth}
    \centering
    \includegraphics[scale=0.5]{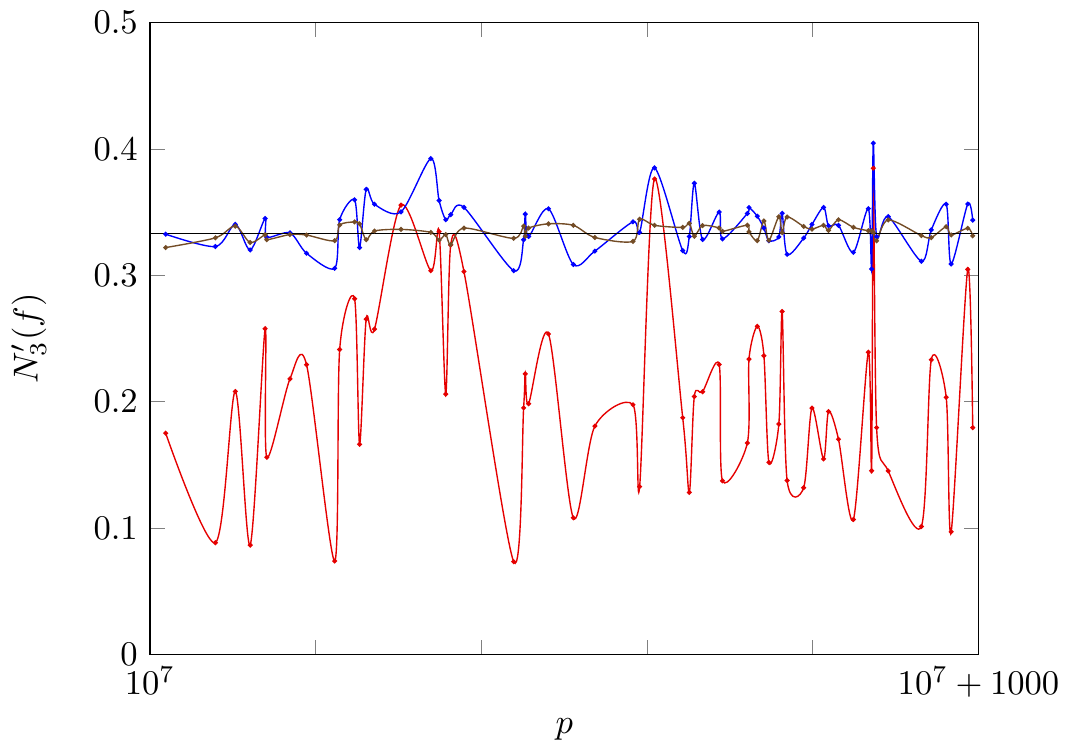}
    \subcaption{$i=3$, large $p$}
    \end{subfigure}
    \caption{Empirical means of $N'_i(f)$ with 10,000 trials when $\mu$ is the uniform measure on $\{-1,0,1\}$ on $\F_p$ for primes $p$ between $10$ and $1,000$ as well as between $10^7$ and $10^7+1000$. Here $f$ is a random polynomial of degree $n=5,10,20$, with $n=5$ in red, $n=10$ in blue, and $n=20$ in brown. The black line indicates $\frac{1}{i}$, which is a good approximation for the expected value under the uniform model for large $p$.}
    \label{fig: p dep}
\end{figure}

Figure \ref{fig: p dep} shows the empirical means of $N'_i(f)$ computed with 10,000 trials, for $i=1,2,3$ and primes $10\leq p\leq 1000$ as well as $10^7\leq p\leq 10^7+1000$. We show data for $f$ a random polynomial of degree $5$, $10$ or $20$, and with coefficients drawn uniformly from $\{-1,0,1\}$. 

This data shows that even when the degree $n$ is relatively small, the expectations are quite close to the true values, except for $i=1$, and that the error does not seem to deteriorate with $p$. Admittedly, our results indicate that any such deterioration should occur when $p\gg e^n$, and so it is possible that the behaviour changes for extremely large $p$. 

The fact that the error is larger for small $i$ also makes sense because in the regime $p\gg n$, the error should be dominated by the error coming from the low order roots, which is of order $n^{-i/2}$.

\begin{figure}
    \centering
    \begin{subfigure}[]{0.3\textwidth}
    \centering
    \includegraphics[scale=0.5]{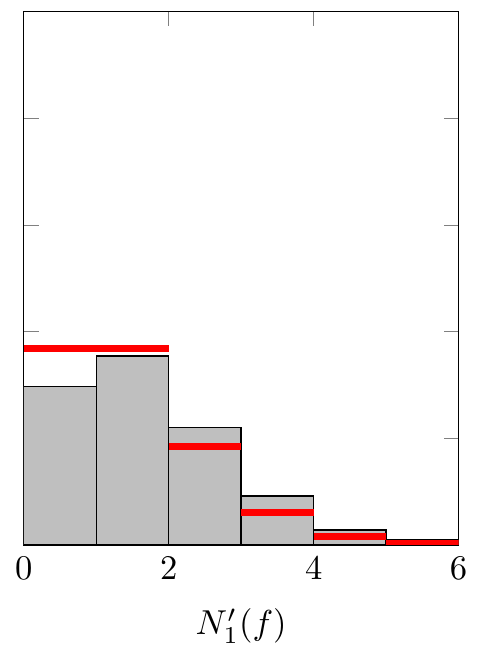}
    \subcaption{$n=20$, $i=1$}
    \end{subfigure}
    \begin{subfigure}[]{0.3\textwidth}
    \centering
    \includegraphics[scale=0.5]{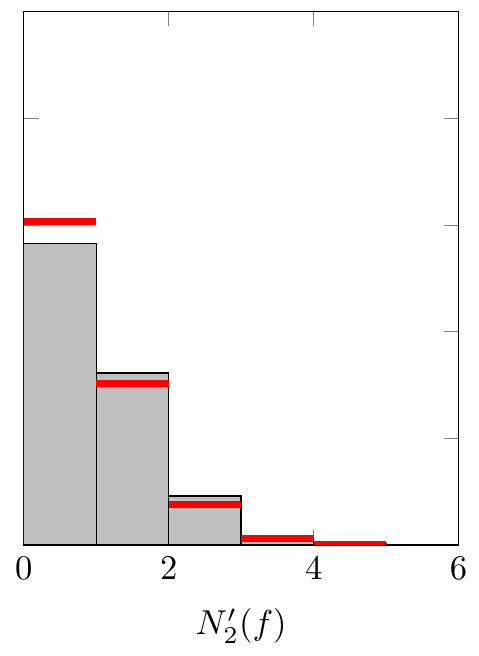}
    \subcaption{$n=20$, $i=2$}
    \end{subfigure}
    \begin{subfigure}[]{0.3\textwidth}
    \centering
    \includegraphics[scale=0.5]{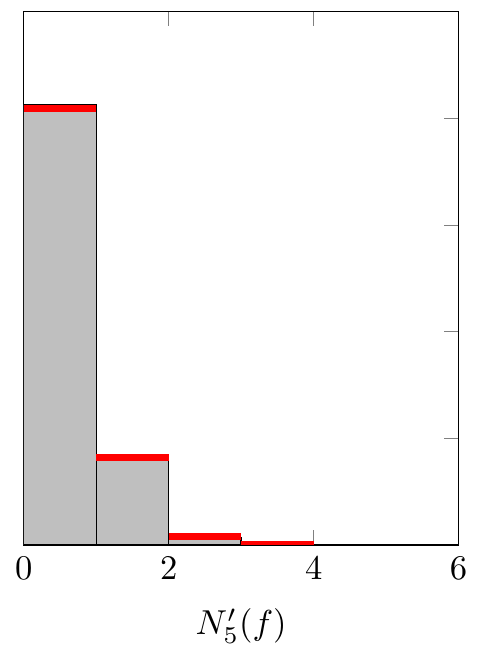}
    \subcaption{$n=20$, $i=5$}
    \end{subfigure}
    \\
    \begin{subfigure}[]{0.3\textwidth}
    \centering
    \includegraphics[scale=0.5]{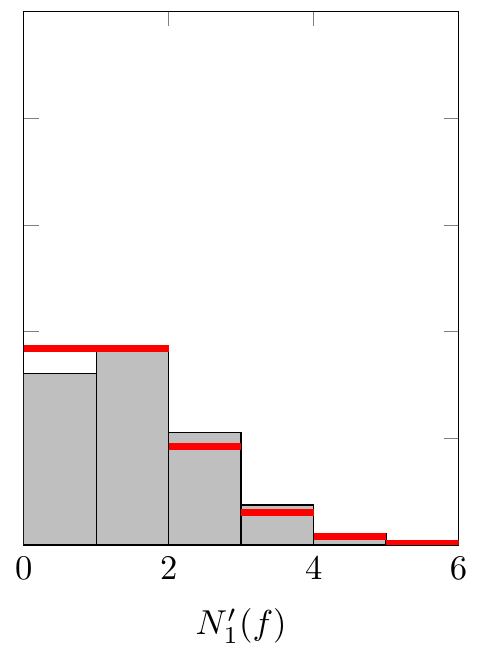}
    \subcaption{$n=50$, $i=1$}
    \end{subfigure}
    \begin{subfigure}[]{0.3\textwidth}
    \centering
    \includegraphics[scale=0.5]{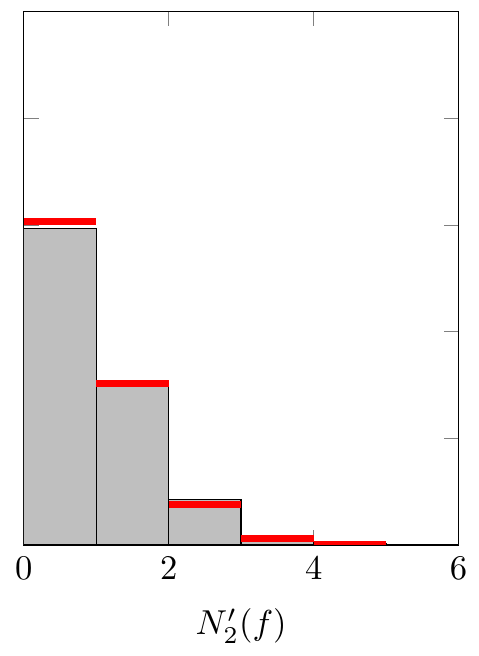}
    \subcaption{$n=50$, $i=2$}
    \end{subfigure}
    \begin{subfigure}[]{0.3\textwidth}
    \centering
    \includegraphics[scale=0.5]{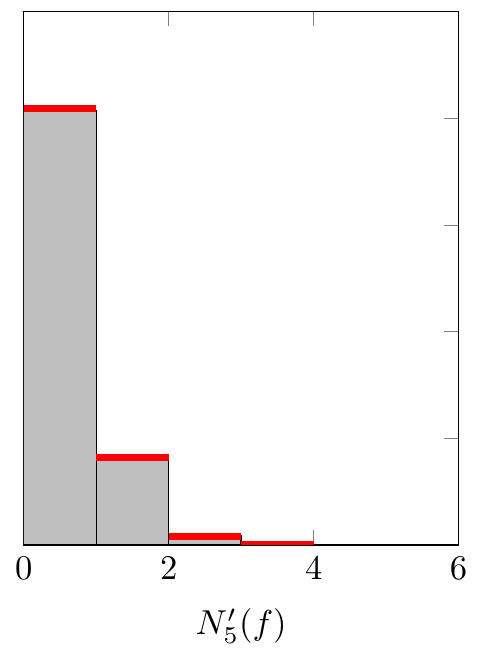}
    \subcaption{$n=50$, $i=5$}
    \end{subfigure}
    \caption{Histograms of $N'_i(f)$ with 10,000 trials when $f$ is a degree $20$ and $50$ random polynomial with coefficients drawn from the uniform measure on $\{-1,0,1\}$ on $\F_p$ for $p=10,000,079$. The expected counts for $\Pois(i^{-1})$ random variables, close approximations to the counts for the uniform model, are shown in red.}
    \label{fig: hist}
\end{figure}

Figure \ref{fig: hist} shows histograms for $N'_i(f)$ with 10,000 trials, when $f$ is a random polynomial of degree $20$ or $50$ with coefficients uniform on $\{-1,0,1\}$. Here, $p\approx 10^7$ and we show data for $i=1,2,5$. Again, the error for $i=1$ seems significantly larger than for larger $i$.

\begin{figure}
    \centering
    \begin{subfigure}[]{0.3\textwidth}
    \centering
    \includegraphics[scale=0.4]{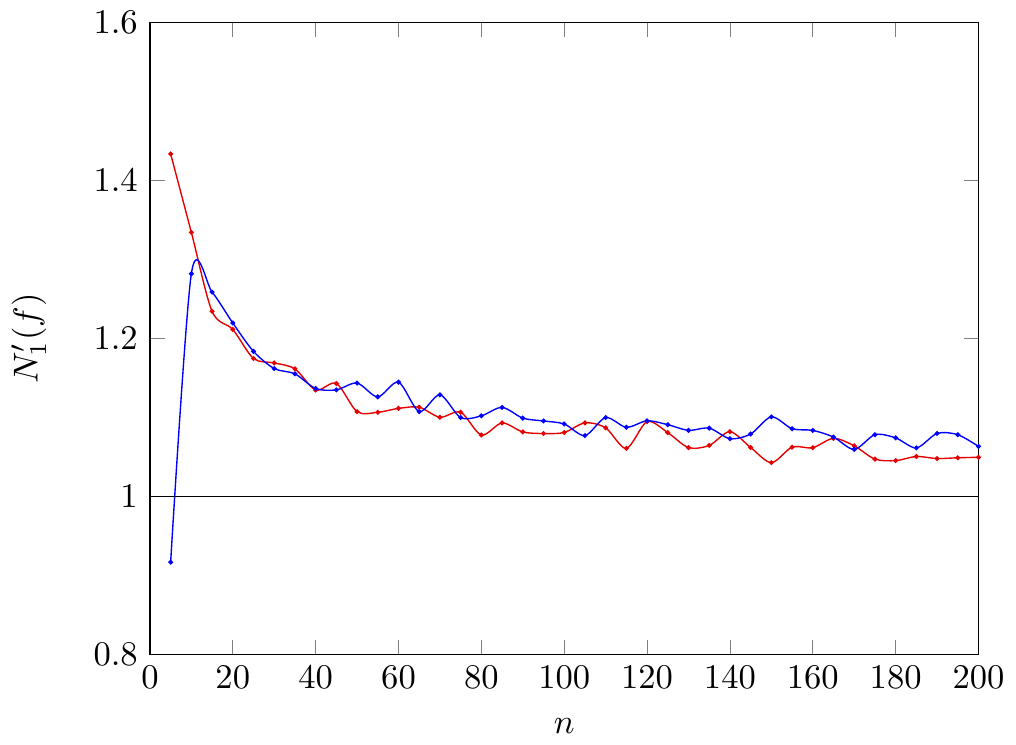}
    \subcaption{$i=1$}
    \end{subfigure}
    \begin{subfigure}[]{0.3\textwidth}
    \centering
    \includegraphics[scale=0.4]{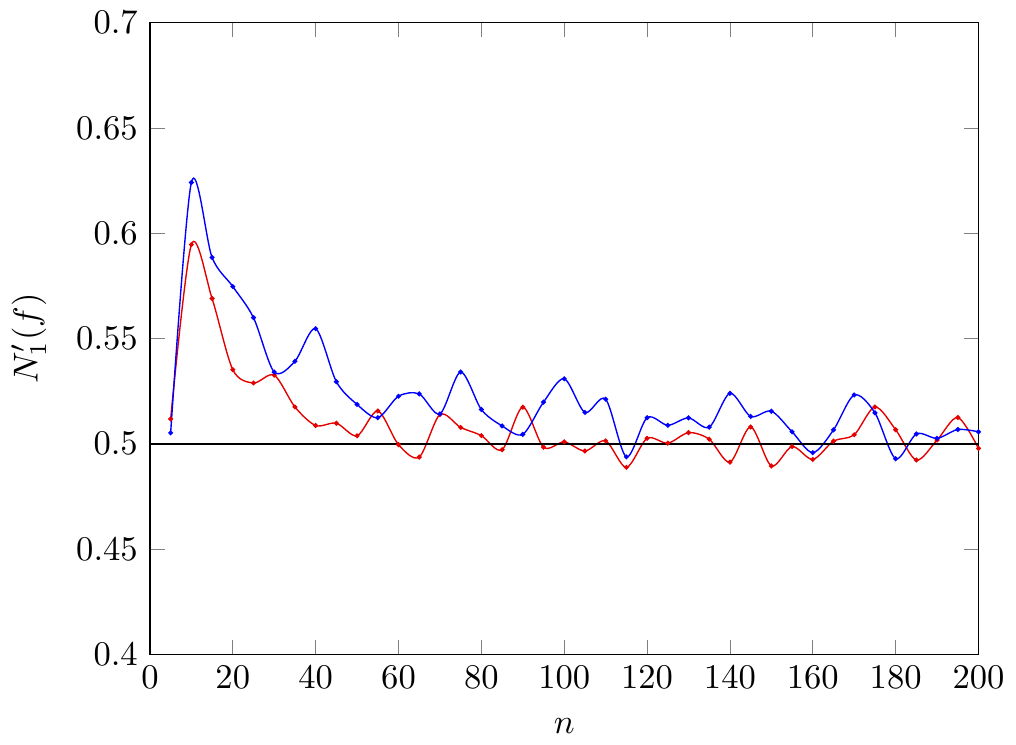}
    \subcaption{$i=2$}
    \end{subfigure}
    \begin{subfigure}[]{0.3\textwidth}
    \centering
    \includegraphics[scale=0.4]{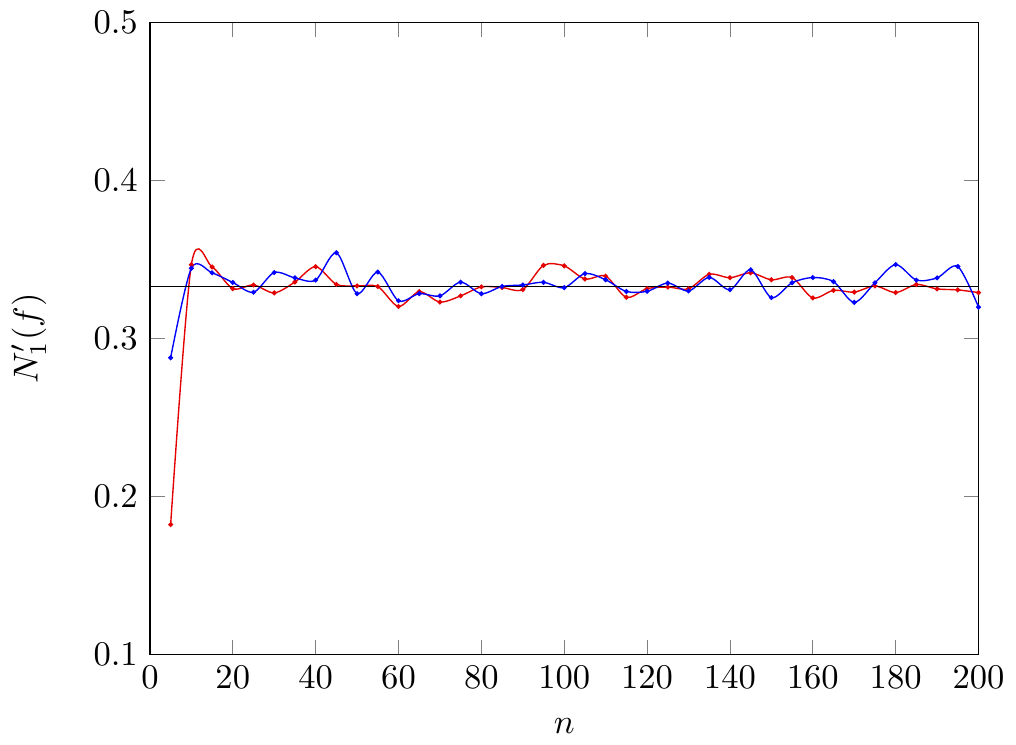}
    \subcaption{$i=5$}
    \end{subfigure}
    \caption{Empirical means of $N'_i(f)$ with 10,000 trials when $\mu$ is the uniform measure on $\{-1,0,1\}$ on $\F_p$ for $f$ a random polynomial of degree $n=5,10,\dotsc,200$. Here $p=101$ is shown in red and $p=10007$ is shown in blue. The black line indicates $\frac{1}{i}$, which is a good approximation for the expected value under the uniform model for large $p$.}
    \label{fig: n dep}
\end{figure}

Figure \ref{fig: n dep} shows the empirical means of $N_i'(f)$ computed with 10,000 trials, for $i=1, 2, 5$, for $p=101$ and $p=10007$. Again, it seems like the error is large for $i=1$ and quickly improves for moderate values of $i$.

This data seems to support the idea that at least for low degree factors, most of the error seems to come from roots of low order. It also seems like the error should note really deteriorate for large $p$, which warrants further study since our methods are unsuited for this.

\subsection{High degree factors}
We now turn to simulations which suggest that even the high degree factors should also exhibit universal behaviour.

\begin{figure}
    \centering
    \begin{subfigure}[]{0.3\textwidth}
    \centering
    \includegraphics[scale=0.7]{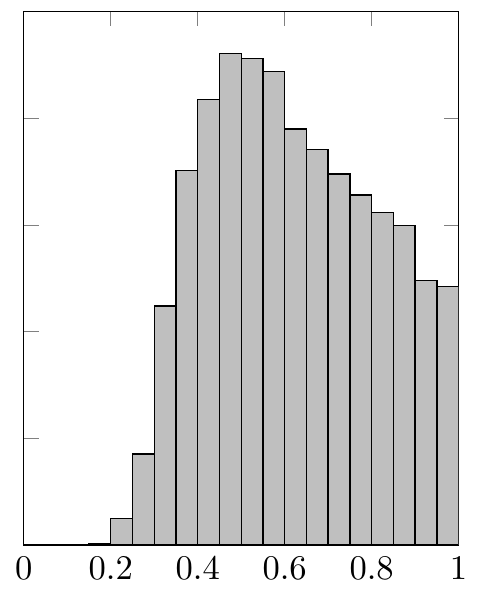}
    \subcaption{$p=11$}
    \end{subfigure}
    \begin{subfigure}[]{0.3\textwidth}
    \centering
    \includegraphics[scale=0.7]{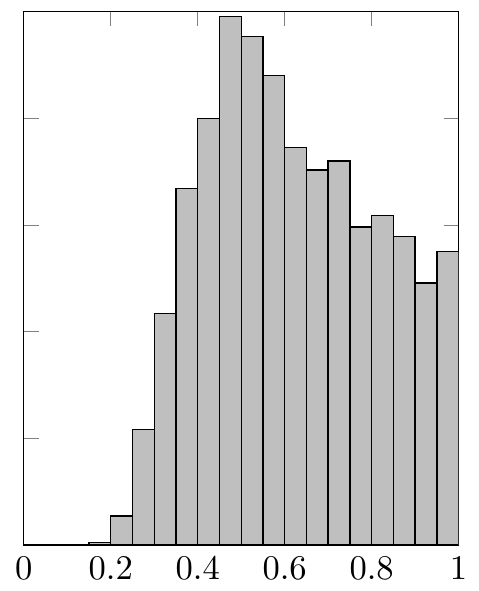}
    \subcaption{$p=10,000,079$}
    \end{subfigure}
    \begin{subfigure}[]{0.3\textwidth}
    \centering
    \includegraphics[scale=0.7]{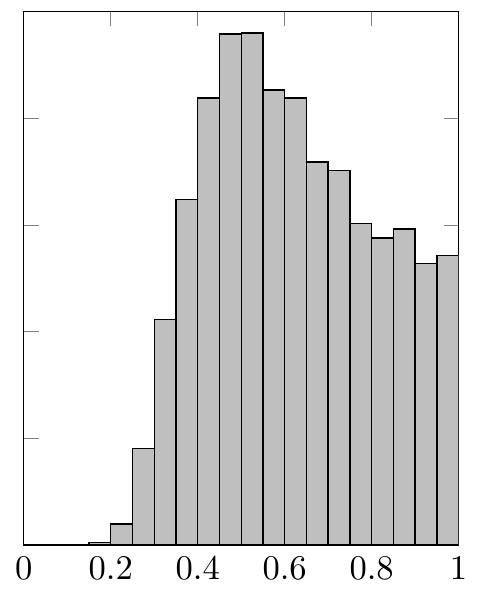}
    \subcaption{Poisson-Dirichlet}
    \end{subfigure}
    \caption{Histograms of the normalized degree of the largest irreducible factor against the maximum of a Poisson-Dirichlet process. Data is shown for 10,000 trials when $\mu$ is the uniform measure on $\{-1,0,1\}$ on $\F_p$ for $f$ a random polynomial of degree $n=500$, and for $p=11$ and $p=10,000,079$.}
    \label{fig: large deg}
\end{figure}

Figure \ref{fig: large deg} shows histograms for the maximal degree of an irreducible factor (normalized by the total degree) against the values for the maximum of a Poisson-Dirichlet process, which is what the uniform model converges to. Based on the data, it seems like at least the maximal degree exhibits universality. While it is harder to check for the joint distribution of all normalized degrees using simulations, it seems plausible that they also exhibit universality.

\section*{Acknowledgments}
The authors thank Nicholas Cook, Persi Diaconis, Sean Eberhard, Ofir Gorodetsky and P\'eter Varj\'u for their help and comments on earlier drafts.

\bibliography{bibliography}{}
\bibliographystyle{habbrv.bst}

\end{document}